\newtheorem{theorem}{Theorem}
\newtheorem{proposition}[theorem]{Proposition}
\newtheorem{lemma}[theorem]{Lemma}
\newtheorem{corollary}[theorem]{Corollary}
\DeclareMathOperator{\crg}{cr}
\DeclareMathOperator{\lcr}{\overline{cr}}
\definecolor{olivegreen}{rgb}{0.05,0.7,0.1}
\begin{document}

\title{The 2-page crossing number of $K_{n}$}
\author{Bernardo M.~\'{A}brego\\{\small California State University, Northridge}\\{\small bernardo.abrego@csun.edu }
\and Oswin Aichholzer\\{\small Graz University of Technology}\\{\small oaich@ist.tugraz.at}
\and Silvia Fern\'{a}ndez-Merchant\\{\small California State University, Northridge}\\{\small silvia.fernandez@csun.edu }
\and Pedro Ramos\\{\small Universidad de Alcal\'{a}}\\{\small pedro.ramos@uah.es}
\and Gelasio Salazar\\{\small Universidad Aut\'{o}noma de San Luis Potos\'{\i}}\\{\small gsalazar@ifisica.uaslp.mx}}
\maketitle

\begin{abstract}
Around 1958, Hill described how to draw the complete graph $K_n$ with
\[
Z\left(  n\right) :=\frac{1}{4}\left\lfloor \frac{n}{2}\right\rfloor
\left\lfloor \frac{n-1}{2}\right\rfloor \left\lfloor \frac{n-2}{2}%
\right\rfloor \left\lfloor \frac{n-3}{2}\right\rfloor
\]
crossings, and conjectured that the crossing number $\crg (K_{n})$
of $K_n$ is exactly $Z(n)$. This is also known as Guy's
conjecture as he later popularized it. Towards the end of the
century, substantially different drawings of $K_{n}$ with $Z(n)$
crossings were found. These drawings are \emph{2-page book
drawings}, that is, drawings where all the vertices are on a line
$\ell$ (the spine) and each edge is fully contained in one of the
two half-planes (pages) defined by~$\ell$. The \emph{2-page crossing
number }of $K_{n} $, denoted by $\nu_{2}(K_{n})$, is
the minimum number of crossings determined by a 2-page book drawing of $K_{n}%
$. Since $\crg(K_{n}) \le\nu_{2}(K_{n})$ and $\nu_{2}(K_{n}) \le Z(n)$, a
natural step towards Hill's Conjecture is the
%(formally)
weaker conjecture $\nu_{2}(K_{n}) = Z(n)$, popularized by Vrt'o.
%As far as we know, this natural
%conjecture was first raised by Imrich Vrt'o in 2007.
%Prior to this paper, results known for $\nu_2(K_n)$ were basically
%the same as for $\crg (K_n)$.  Here
In this paper we develop a novel and innovative technique to investigate
crossings in drawings of $K_{n}$, and use it to prove that $\nu_{2}(K_{n}) =
Z(n) $. To this end, we extend the inherent geometric definition of $k$-edges
for finite sets of points in the plane to topological drawings of $K_{n}$. We
also introduce the concept of ${\leq}{\leq}k$-edges as a useful generalization
of ${\leq}k$-edges and extend a powerful theorem that expresses the
number of crossings in a rectilinear drawing of $K_{n}$ in terms of its number
of $(\le k)$-edges to the topological setting.
Finally, we give a complete characterization of crossing minimal
  2-page book drawings of $K_{n}$ and show that, up to equivalence,
  they are unique for $n$ even, but that there exist an exponential
  number of non-homeomorphic such drawings for $n$ odd.
\end{abstract}

%\keywords{Crossing number, Topological drawing, Complete graph}

\section{Introduction}

In a \emph{drawing} of a graph in the plane, each vertex is
represented by a point and each edge is represented by a simple open
arc, such that if $uv$ is an edge, then the closure (in the
plane) of the arc $\alpha$ representing $uv$ consists
precisely of $\alpha$ and the points representing $u$ and $v$. It is
further required that no arc representing an edge contains a point
representing a vertex.

A crossing in a drawing $D$ of a graph $G$ {is a pair}
$(x,\{\alpha,\beta\})$, where $x$ is a point in the plane,
$\alpha,\beta$ are arcs representing different edges, and $x\in
\alpha\cap \beta$.
The \emph{crossing number} $\crg(D)$ of $D$ is the number of crossings in
$D$, and the \emph{crossing number} $\crg(G)$ of $G$ is the minimum $\crg(D)$,
taken over all drawings $D$ of $G$.

A drawing is \emph{good} if  (i) no three distinct arcs representing
edges meet at a common point;  (ii) if two edges are adjacent, then
the arcs representing them do not intersect each other; and  (iii) an
intersection point between two arcs representing edges is a crossing
rather than tangential. It is well-known (and easy to prove) that every graph has a
crossing-minimal drawing which is good {(moreover, (ii) and
  (iii) hold in {\em every} crossing-minimal drawing)}. Thus, when our aim (as in this paper)
is to estimate the crossing number of a graph, we may assume that all drawings
under consideration are good.

As usual, for simplicity we often
make no distinction between a vertex and the point representing it, or
between an edge and the arc representing it. No confusion should arise
from this practice.

Around 1958, Hill conjectured that
\begin{equation}
\label{eq:hillc}\crg(K_{n}) =Z(n) :=\frac{1}{4}\left\lfloor \frac{n}%
{2}\right\rfloor \left\lfloor \frac{n-1}{2}\right\rfloor \left\lfloor
\frac{n-2}{2}\right\rfloor \left\lfloor \frac{n-3}{2}\right\rfloor .
\end{equation}
This conjecture appeared in print a few years later in papers by Guy
\cite{Guy60} and Harary and Hill \cite{HH63}. Hill described drawings of
$K_{n}$ with $Z(n)$ crossings, which were later corroborated by Bla\v{z}ek and
Koman \cite{BC64}. These drawings show that $\crg(K_{n}) \le Z(n)$. The best
known general lower bound is $\lim_{n\to\infty}\crg(K_{n})/Z(n) \ge0.8594$,
due to de Klerk et al.\ \cite{deCPS07}. For more on the history of this
problem we refer the reader to the excellent survey by Beineke and Wilson
\cite{BW10}.

One of the major motivations for investigating crossing numbers is their
application to VLSI design. With this motivation in mind, Chung, Leighton and
Rosenberg~\cite{Chung87} analyzed embeddings of graphs in books: the vertices
lie on a line (the \emph{spine}) and the edges lie on the \emph{pages} of the
book. Book embeddings of graphs have been extensively studied \cite{bil,dw}.
Now if the book has $k$ pages, and crossings among edges are allowed, the
result is a $k$-\emph{page book drawing}.

Here we concentrate on $2$-page book drawings. The \emph{2-page crossing
number} $\nu_{2}(G)$ of a graph $G$ is the minimum of $\crg(D)$ taken over all
2-page book drawings $D$ of $G$. Alternative terminologies for the 2-page
crossing number are \emph{circular crossing number}~\cite{Harb02} and
\emph{fixed linear crossing number}~\cite{CimMu07}. We may regard the pages as
the closed half-planes defined by the spine, and so every $2$-page book
drawing can be realized as a plane drawing; it follows that $\crg(G) \le
\nu_{2}(G)$ for every graph $G$.

In 1964, Bla\v{z}ek and Koman\ \cite{BC64} found 2-page book
drawings of
  $K_{n}$ with $Z(n)$ crossings, thus showing that $\nu_{2}(K_{n})\le
  Z(n)$ (see also Guy et al.\ \cite{Guy68}, Damiani et al.\ \cite{DaDASa}, Harborth
  \cite{Harb02}, and Shahrokhi et al.~\cite{SSSV96}.) Once these
constructions were known, the conjecture that $\nu_{2}(K_{n}) =Z(n)$
is implicit in the conjecture given by Equation~(\ref{eq:hillc})
since $\crg(K_{n}) \leq\nu_{2}(K_{n})$.  However, the only explicit
reference to this weaker conjecture is, as far as we know, from
Vrt'o~\cite{vrto2}.

Buchheim and Zhang~\cite{buchheim} reformulated the problem of
finding $\nu_{2}(K_{n})$ as a maximum cut problem on associated
graphs, and then solved exactly this maximum cut problem for all
$n\le13$, thus confirming Equation~(\ref{eq:hillc}) for $2$-page
book drawings for all $n\le14$ (the case $n=14$ follows from the
case $n=13$ by an elementary counting argument). Very recently, De
Klerk and Pasechnik~\cite{dkp} used this max cut reformulation to
find the exact value of $\nu_{2}(K_{n})$ for all $n\le21$ and
$n=24$, and moreover, by using semidefinite programming techniques,
to obtain the asymptotic bound
$\lim_{n\to\infty}\nu_2(K_{n})/Z(n) \ge~0.9253$. All the
results reported in \cite{buchheim} and~\cite{dkp} are
computer-aided.

In this paper we prove that $\nu_{2}(K_{n})=Z(n)$. The main technique for the
proof is the extension of the concept of \emph{$k$-edge} of a finite set of
points to topological drawings of the complete graph. We do this in a way such
that the identities proved by \'{A}brego and Fern\'{a}ndez-Merchant \cite{AF05}
and Lov\'{a}sz et al.\ \cite{LVWW04}, that express the crossing number of a
rectilinear drawing of $K_{n}$ in terms of the $k$-edges or the $(\le k)$-edges of its set of
vertices, are also valid in the topological setting.

We recall that a drawing $D$ is \emph{rectilinear} if the edges of $D$ are
straight line segments, and the \emph{rectilinear crossing number} $\lcr(G)$
of a graph $G$ is the minimum of $\crg(D)$ taken over all rectilinear drawings
$D$ of $G$. An edge $pq$ of $D$ is a $k$-\emph{edge} if the line spanned by
$pq$ divides the remaining set of vertices into two subsets of cardinality $k$
and $n-2-k$. Thus a $k$-edge is also an $(n-2-k)$-edge. Denote by $E_{k}(D)$
the number of $k$-edges of $D$. The following identity \cite{AF05,LVWW04} has
been key to the recent developments on the rectilinear crossing number of
$K_{n}$.%

\begin{equation}
\label{eq:tu}\lcr\left(  D\right)  =3\binom{n}{4}-\sum\limits_{k=0}%
^{\left\lfloor n/2\right\rfloor -1}k\left(  n-2-k\right)  E_{k}\left(
D\right)  .
\end{equation}

In Section~\ref{sec:croedg} we generalize the concept of $k$-edge to
arbitrary (that is, not necessarily rectilinear) drawings of
$K_{n}$. This allows us to extend Equation~(\ref{eq:tu}) to (good)
topological drawings of $K_{n}$. The key observation to extend the
definition of $k$-edge to the new setting is to observe that,
although half-planes are not well defined, we can use the
orientation of the triangles defined by three points: the edge $pq$
will be a $k$-edge of the topological drawing if the set of
triangles adjacent to $pq$ is divided, according to its orientation,
into two subsets with cardinality $k$ and $n-k-2$. In
Section~\ref{s:2page} we use this tool to show that
$\nu_{2}(K_{n})=Z(n)$. In order to do that, we need to introduce the
new concept of ${\leq}{\leq}k$-edges, because for topological
drawings the lower bound for ${\leq}k$-edges, $E_{\leq k}(D)
\geq3\tbinom{k+2}{2}$ does not hold. In Section~\ref{s:OptConfig} we
analyze crossing optimal 2-page drawings of $K_{n}$. We give a
complete characterization of their structure, showing that, up to
equivalence (see Section~\ref{section:equivalent} for a detailed
definition), crossing optimal drawings are unique for $n$ even. In
contrast, for $n$ odd we provide a family of size $2^{(n-5)/2}$ of
non-equivalent crossing optimal drawings.  We conclude with some
open questions and directions for future research in
Section~\ref{sec:concludingremarks}.
%Using our Th. 6 the analysis for the algorithm in
%\cite{CimMu07} becomes tight.

An extended abstract of this paper \cite{AAFRS} has appeared. In it
we include some additional observations on the structure of crossing
optimal $2$-page drawings of $K_n$. For instance, in these drawings
the above mentioned inequality $E_{\leq k}\left( D\right) \geq
  3\binom{k+2}{2}$ does hold.

\section{Crossings and $k$-edges}

\label{sec:croedg} In this section we generalize the concept of $k$-edges,
which has so far only been used in the geometric setting of finite sets of
points in the plane, to topological drawings of $K_{n}$. Let $D$ be a good
drawing of $K_{n}$, let $\overrightarrow{pq}$ be a directed edge of
$D$, and $r$ a vertex of $D$ other than~$p$ or~$q$. We say that \emph{$r$ is on the
left (respectively, right) side of $\overrightarrow{pq}$} if the topological
triangle $pqr$ traced in that order (its vertices and edges correspond to
those in $D$) is oriented counterclockwise (respectively, clockwise). Note
that this is well defined as the three edges $pq$, $qr$, and $rp$ in $D$ do
not self intersect and do not intersect each other, since $D$ is good. We say
that the edge $pq$ is a \emph{$k$-edge of $D$} if it has exactly $k$ points of
$D$ on the same side (left or right), and thus $n-2-k$ points on the other
side. Hence, as in the geometric setting, a $k$-edge is also an $(n-2-k)$-edge. Note that the
direction of the edge $pq$ is no longer relevant and every edge of $D$ is a
$k$-edge for some unique $k$ such that $0 \le k \le\lfloor n/2 \rfloor-1$. Let
$E_{k}(D)$ be the number of $k$-edges of $D$.

\begin{theorem}
\label{CrvsEdges}For any good drawing $D$ of $K_{n}$ in the plane the
following identity holds,
\[
\crg\left(  D\right)  =3\binom{n}{4}-\sum\limits_{k=0}^{\left\lfloor
n/2\right\rfloor -1}k\left(  n-2-k\right)  E_{k}\left(  D\right)  .
\]

\end{theorem}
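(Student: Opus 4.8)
The plan is to prove the identity by a counting argument over $4$-element subsets of vertices, mirroring the original rectilinear proof of Equation~(\ref{eq:tu}) but replacing the geometry of half-planes with the topological left/right notion just defined. The crucial preliminary fact is that every good drawing of $K_4$ has either $0$ or $1$ crossing: with four vertices, the induced sub-drawing is a good drawing of $K_4$, and one checks that crossing-free good drawings of $K_4$ are exactly those whose convex-position analogue has one vertex ``inside'' the triangle of the other three (in the topological sense), while drawings with a crossing correspond to the ``convex'' configuration. So I would first establish that a good drawing of $K_4$ contributes exactly one crossing if and only if, among its four vertices, each is on a prescribed side of the opposite edges in a way that forces the crossing.

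Next I would set up the double-counting. The quantity $3\binom{n}{4}$ counts something uniform across all $4$-subsets, and the subtracted sum is a weighted count of $k$-edges; the strategy is to localize both sides to $4$-subsets. For a fixed edge $pq$ that is a $k$-edge, and a fixed choice of two further vertices $r,s$, the $4$-subset $\{p,q,r,s\}$ either contributes a crossing or not, and the contribution can be read off from how $r$ and $s$ sit relative to $\overrightarrow{pq}$ (same side versus opposite sides). The key combinatorial step is to show that an edge $pq$ with $k$ points on one side participates, as the ``crossing diagonal,'' in exactly $k(n-2-k)$ of the crossings, namely those $4$-subsets $\{p,q,r,s\}$ in which $r$ and $s$ lie on opposite sides of $pq$. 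Summing $k(n-2-k)$ over all edges (grouped by their $k$-value, giving the factor $E_k(D)$) then counts the total number of crossings, but with a multiplicity/overcounting that must be reconciled against the term $3\binom{n}{4}$.

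Concretely, I would argue that for each $4$-subset the three ways to split it into two pairs (the three ``diagonals'') behave in a controlled way: in a crossing $K_4$ exactly one of the three diagonals realizes the crossing while the other two pairs are ``non-crossing,'' and in a crossing-free $K_4$ none of the diagonals crosses. Writing $c(Q)\in\{0,1\}$ for the crossing count of the $4$-subset $Q$, I would express $\sum_{k} k(n-2-k) E_k(D)$ as a sum over $4$-subsets of the number of their diagonals that are \emph{not} realized as crossings, and show this equals $\sum_{Q}\bigl(3 - c(Q)\bigr) = 3\binom{n}{4} - \crg(D)$. Rearranging yields the claimed identity. The main obstacle I anticipate is the purely topological verification that the left/right side count behaves exactly as in the rectilinear case for every $4$-subset — that is, proving the local lemma classifying good $K_4$ drawings and confirming that ``$r,s$ on opposite sides of $pq$'' corresponds precisely to the pair $\{p,q\}$ being the crossing diagonal. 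Unlike the rectilinear setting, where this follows from elementary convexity, here one must rule out pathological orientations and confirm the orientation of the topological triangles is consistent across the three diagonals of each $4$-subset; once that local consistency is secured, the global double-count is routine.
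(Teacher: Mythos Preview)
Your overall plan is the same as the paper's: localize to $4$-subsets, count for each $4$-subset the number of triples $(pq,\{r,s\})$ with $r,s$ on opposite sides of $pq$ (the paper calls these \emph{separations}), and compare with the crossing count. However, your key local claim is wrong. You assert (and single out as the ``main obstacle'') that ``$r,s$ on opposite sides of $pq$'' corresponds precisely to $\{p,q\}$ being the crossing diagonal. This fails already in the rectilinear case: take four points with one inside the triangle of the other three. For each edge $vq$ from the interior vertex $v$ to an outer vertex $q$, the remaining two outer vertices lie on opposite sides of $vq$, yet $vq$ crosses nothing. So the crossing-free $K_4$ has three separations and zero crossing diagonals. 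Your fallback claim --- that separations biject with \emph{non}-crossing diagonals --- also has no natural bijection behind it: in the convex (crossing) $K_4$, both separations point to the \emph{crossing} partition $\{13,24\}$, not to the two non-crossing ones.

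What actually makes the argument work is simpler than either of your proposed correspondences. The paper enumerates the (up to ambient isotopy) three good drawings of $K_4$, checks by hand that the crossing-free one has $3$ separations while each of the two one-crossing types has $2$, and then writes $\sum_k k(n-2-k)E_k(D)=3T_A+2T_B+2T_C$, where $T_A,T_B,T_C$ count subdrawings of each type. Since $T_A+T_B+T_C=\binom{n}{4}$ and $\crg(D)=T_B+T_C$, the identity follows immediately. No bijection between separations and diagonals is needed or even true; one only needs the separation counts $3,2,2$ per type.
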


\begin{proof}
In a good drawing of $K_{n}$, we say that an edge $pq$\emph{\ separates} the
vertices $r$ and $s$ if the orientations of the triangles $pqr$ and $pqs$ are
opposite.
%The edge $pq$ is then called a {\it separating edge} (with respect to $r$ and $s$).
In this case, we say that the set $\{pq,r,s\}$ is a \emph{separation}. It is
straightforward to check that, up to ambient isotopy equivalence, there are only three
  different good drawings ${A,B,C}$ of $K_{4}$; these are shown in Figure~\ref{types}.
%any good drawing of $K_{4}$ is isomorphic to one
%of the three drawings shown in Figure~\ref{types}.
%\tred{Pedro: here is an alternative view of this (for future
%versions): in $K_4$ there is always a non-crossing cycle. After that, there are
%two posibilities, the remaining diagonals are in different faces (non-crossing)
%or they are in the same face (1 crossing).}
\begin{figure}[ptb]
%[hp]
\par
\begin{center}
\includegraphics[width=3.25in ] {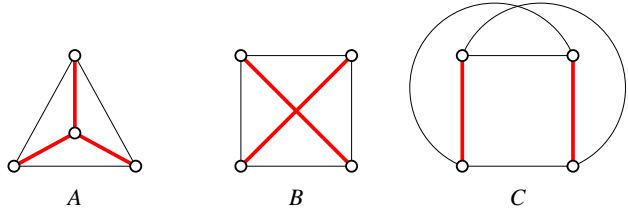}
\end{center}
\caption{The three different good
drawings of $K_{4}$, with $3$, $2$, and $2$ separations. The edge of each
separation is shown bold.}%
\label{types}
\end{figure}

We denote by $T_{A}$, $T_{B}$, and $T_{C}$ the number of induced subdrawings
of $D$ of type $A$, $B$, and $C$, respectively. Then
\begin{equation}
T_{A}+T_{B}+T_{C}=\binom{n}{4},\label{quads}%
\end{equation}
and since the subdrawings of types $B$ or $C$ are in one-to-one correspondence
with the crossings of~$D$, it follows that
\begin{equation}
\crg\left(  D\right)  =T_{B}+T_{C}.\label{crossings}%
\end{equation}
%
%Each subdrawing of type $A$ has 3 separating edges (cf.\
%Figure~\ref{types}, where they are drawn in bold), and each
%subdrawing of type $B$ or $C$ has 2 separating edges. Moreover, a
%$k$-edge $pq$ is involved in $k(n-k-2)$ configurations of 4 points
%such that $pq$ separates the two other points. Therefore, we can
%obtain the total number of separating edges in two different ways,
%leading to the following equation
We count the number of separations in $D$ in two different ways: First, each
subdrawing of type $A$ has $3$ separations (the edge in each separation is
bold in Figure~\ref{types}), and each subdrawing of types $B$ or $C$ has 2
separations. This gives a total of $3T_{A}+2T_{B}+2T_{C}$ separations in $D$.
Second, each $k$-edge belongs to exactly $k (n-2-k)$ separations. Summing over
all $k$-edges for $0 \le k \le\lfloor n/2 \rfloor-1$ gives a total of
$\sum_{k=0}^{\lfloor n/2 \rfloor-1}k( n-2-k) E_{k}(D)$ separations in $D$.
Therefore
\begin{equation}
3T_{A}+2T_{B}+2T_{C}=\sum_{k=0}^{\lfloor n/2 \rfloor-1}k\left(  n-2-k\right)
E_{k}(D).\label{separations}%
\end{equation}
Finally, subtracting Equation~(\ref{separations}) from three times
Equation~(\ref{quads}) we get
\[
T_{B}+T_{C}=3\binom{n}{4}-\sum_{k=0}^{\lfloor n/2 \rfloor-1}k\left(
n-2-k\right)  E_{k}(D),
\]
and thus by Equation~(\ref{crossings}) we obtain the claimed result.
\end{proof}

For $0\leq k\leq\lfloor n/2\rfloor-1$ and $D$ a good drawing of $K_{n}$, we
define the set of ${\leq}k$\emph{-edges} of $D$ as all $j$-edges in $D$ for
$j=0,\ldots,k$. The number of ${\leq}k$-edges of $D$ is denoted by
\[
E_{{\leq}k}\left(  D\right)  :=\sum\limits_{j=0}^{k}E_{j}\left(  D\right)  .
\]
Similarly, we denote the number of ${\leq}{\leq}k$\emph{-edges }of $D$ by
\[
E_{{\leq}{\leq}k}\left(  D\right)  :=\sum\limits_{j=0}^{k}E_{\leq j}\left(
D\right)  =\sum\limits_{j=0}^{k}\sum\limits_{i=0}^{j}E_{i}\left(  D\right)
=\sum\limits_{i=0}^{k}\left(  k+1-i\right)  E_{i}\left(  D\right)  .
\]
To avoid special cases we define ${E_{\leq\leq-1}(D)=E_{\leq\leq-2}(D)=0.}$

The following result restates Theorem~\ref{CrvsEdges} in terms of the number
of ${\leq}{\leq}k$-edges.

\begin{proposition}
\label{IdentityForAtmostAtmost}Let $D$ be a good drawing of $K_{n}$. Then
\[
\crg(D)=2\sum\limits_{k=0}^{\lfloor n/2\rfloor-2}E_{{\leq}{\leq}k}(D)-\frac
{1}{2}\binom{n}{2}\left\lfloor \frac{n-2}{2}\right\rfloor -\frac{1}%
{2}(1+(-1)^{n})E_{\leq\leq\left\lfloor n/2\right\rfloor -2}(D).
\]

\end{proposition}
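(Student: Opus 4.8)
The plan is to start from Theorem~\ref{CrvsEdges} and convert the weighted sum $\sum_{k}k(n-2-k)E_{k}(D)$ into the cumulative quantities $E_{\leq\leq k}(D)$ by a summation-by-parts argument, and then reconcile the two expressions through a parity-sensitive scalar computation. Throughout I would write $h=\lfloor n/2\rfloor$ for brevity.

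First I would establish the purely combinatorial identity that carries all the content,
\[
2\sum_{k=0}^{h-2}E_{\leq\leq k}(D)=\sum_{k=0}^{h-1}(h-k)(h-1-k)E_{k}(D).
\]
To prove it, substitute the definition $E_{\leq\leq k}(D)=\sum_{i=0}^{k}(k+1-i)E_{i}(D)$, interchange the order of summation so that each $E_{i}(D)$ is collected, and evaluate the resulting inner arithmetic sum $\sum_{k=i}^{h-2}(k+1-i)=\binom{h-i}{2}$, which equals $\tfrac12(h-i)(h-1-i)$. Since the term $k=h-1$ on the right carries the factor $(h-1-k)=0$, extending the range up to $h-1$ is harmless and yields the stated symmetric form.

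Next I would substitute Theorem~\ref{CrvsEdges} and inspect, for each $k$, the combined coefficient $c_{k}:=k(n-2-k)+(h-k)(h-1-k)$. A short expansion shows a clean collapse: $c_{k}=h(h-1)$ when $n$ is odd, and $c_{k}=h(h-1)-k$ when $n$ is even. This dichotomy is precisely the source of the factor $(1+(-1)^{n})$ in the statement. For both parities I would then invoke the elementary fact that every edge is a $k$-edge for a unique $k\in\{0,\dots,h-1\}$, so that $\sum_{k=0}^{h-1}E_{k}(D)=\binom{n}{2}$; this disposes of the constant part $h(h-1)$ of $c_{k}$. For $n$ even the leftover linear term $-\sum_{k}kE_{k}(D)$ is absorbed into the correction term by the identity $\sum_{k=0}^{h-1}kE_{k}(D)+E_{\leq\leq h-2}(D)=(h-1)\binom{n}{2}$, which follows again from $\sum_{k}E_{k}(D)=\binom{n}{2}$ because the coefficient sequences $i\mapsto i$ and $i\mapsto h-1-i$ sum to the constant $h-1$.

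The remaining work is a routine numerical check that the scalar parts agree: one verifies that $3\binom{n}{4}+\tfrac12\binom{n}{2}\lfloor (n-2)/2\rfloor$ equals $h(h-1)\binom{n}{2}$ when $n$ is odd and $(h-1)^{2}\binom{n}{2}$ when $n$ is even. After writing $n=2h+1$ or $n=2h$, each reduces to a polynomial identity in $h$ confirmed by direct expansion. I expect this bookkeeping---tracking the parity term and keeping the boundary index $k=h-1$ versus $k=h-2$ straight---to be the only delicate point; the structural heart of the argument is the single summation-by-parts identity, and nothing beyond Theorem~\ref{CrvsEdges} and the edge count $\sum_{k}E_{k}(D)=\binom{n}{2}$ is required.
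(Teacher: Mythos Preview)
Your proposal is correct, and it takes a genuinely different route from the paper's own argument. The paper writes each $E_{k}(D)$ as a second difference $E_{\leq\leq k}-2E_{\leq\leq k-1}+E_{\leq\leq k-2}$, substitutes into $\sum_{k}k(n-2-k)E_{k}(D)$, and then performs an Abel-style summation by parts, tracking the boundary contributions at $k=\lfloor n/2\rfloor-1$ and $k=\lfloor n/2\rfloor-2$ explicitly. You go in the opposite direction: you compute the coefficient of each $E_{k}(D)$ in $2\sum_{k}E_{\leq\leq k}(D)$ directly (obtaining $(h-k)(h-1-k)$), and then observe that the combined coefficient $c_{k}=k(n-2-k)+(h-k)(h-1-k)$ collapses to the constant $h(h-1)$ for odd $n$ and to $h(h-1)-k$ for even $n$. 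This algebraic collapse is the heart of your argument and it makes the parity dichotomy transparent; the correction term for even $n$ then falls out immediately from your identity $\sum_{k}kE_{k}(D)+E_{\leq\leq h-2}(D)=(h-1)\binom{n}{2}$. Your approach is arguably cleaner---it avoids the careful boundary bookkeeping of the paper's telescoping computation---while the paper's approach has the minor virtue of being mechanical and not requiring one to spot the simplification of $c_{k}$. Both proofs rest only on Theorem~\ref{CrvsEdges} and the edge count $\sum_{k}E_{k}(D)=\binom{n}{2}$.
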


\begin{proof}
First note that for $2\leq k\leq\lfloor n/2\rfloor-1$ we have that $E_{{\leq
}{\leq}k}(D)-E_{{\leq}{\leq}k-1}(D)=E_{{\leq}k}(D)$ and $E_{{\leq}%
k}(D)-E_{\leq k-1}(D)=E_{k}(D)$. Thus
\[
E_{k}\left(  D\right)  =E_{{\leq}{\leq}k}\left(  D\right)  -2E_{\leq\leq
k-1}\left(  D\right)  +E_{{\leq}{\leq}k-2}\left(  D\right)  .
\]
We rewrite the last term in Theorem~\ref{CrvsEdges}.
\begin{align*}
&  \sum\limits_{k=0}^{\lfloor n/2\rfloor-1}k(n-2-k)E_{k}(D)\\
&  =\sum\limits_{k=2}^{\lfloor n/2\rfloor-1}k(n-2-k)[E_{{\leq}{\leq}%
k}(D)-2E_{{\leq}{\leq}k-1}(D)+E_{{\leq}{\leq}k-2}(D)]\\
&  =\sum\limits_{k=0}^{\lfloor n/2\rfloor-3}%
(k(n-2-k)-2(k+1)(n-3-k)+(k+2)(n-4-k))E_{{\leq}{\leq}k}(D)\\
&  \qquad+\left(  \left\lfloor \frac{n}{2}\right\rfloor -1\right)
\left( n-1-\left\lfloor \frac{n}{2}\right\rfloor \right)
E_{\leq\leq\lfloor n/2\rfloor-1}(D)+(-2\left(  \left\lfloor
\frac{n}{2}\right\rfloor -1\right)
\left(  n-1-\left\lfloor \frac{n}{2}\right\rfloor \right) \\
&  \qquad+\left(  \left\lfloor \frac{n}{2}\right\rfloor -2\right)
\left( n-\left\lfloor \frac{n}{2}\right\rfloor \right)
)E_{\leq\leq\lfloor
n/2\rfloor-2}(D)\\
&  =-2\sum\limits_{k=0}^{\lfloor n/2\rfloor-3}E_{{\leq}{\leq}k}(D)+\left(
\left\lfloor \frac{n}{2}\right\rfloor -1\right)  \left(  n-1-\left\lfloor
\frac{n}{2}\right\rfloor \right)  E_{\leq\leq\left\lfloor n/2\right\rfloor
-1}(D)\\
&  \qquad+(-2\left(  \left\lfloor \frac{n}{2}\right\rfloor -1\right)  \left(
n-1-\left\lfloor \frac{n}{2}\right\rfloor \right)  +\left(  \left\lfloor
\frac{n}{2}\right\rfloor -2\right)  \left(  n-\left\lfloor \frac{n}%
{2}\right\rfloor \right)  )E_{\leq\leq\left\lfloor n/2\right\rfloor -2}(D).
\end{align*}
Since $E_{\leq\leq\lfloor n/2\rfloor-1}(D)=E_{\leq\leq\lfloor
n/2\rfloor -2}(D)+E_{\leq\lfloor n/2\rfloor-1}(D)=E_{\leq\leq\lfloor
n/2\rfloor -2}(D)+\tbinom{n}{2}$, it follows by
Theorem~\ref{CrvsEdges} that
\begin{align*}
\crg\left(  D\right)   &  =3\binom{n}{4}-\sum\limits_{k=0}^{\left\lfloor
n/2\right\rfloor -1}k\left(  n-2-k\right)  E_{k}\left(  D\right)  =3\binom
{n}{4}+2\sum\limits_{k=0}^{\left\lfloor n/2\right\rfloor -3}E_{{\leq}{\leq}%
k}\left(  D\right) \\
&  +\left(  n+1-2\left\lfloor \frac{n}{2}\right\rfloor \right)  E_{\leq
\leq\left\lfloor n/2\right\rfloor -2}(D)-\left(  \left\lfloor \frac{n}%
{2}\right\rfloor -1\right)  \left(  n-1-\left\lfloor \frac{n}{2}\right\rfloor
\right)  \binom{n}{2}\\
&  =2\sum\limits_{k=0}^{\left\lfloor n/2\right\rfloor -3}E_{\leq\leq k}\left(
D\right)  -\frac{1}{2}\binom{n}{2}\left\lfloor \frac{n-2}{2}\right\rfloor +%
\begin{cases}
E_{\leq\leq\left\lfloor n/2\right\rfloor -2}(D) & \text{if $n$ is even},\\
2E_{\leq\leq\left\lfloor n/2\right\rfloor -2}(D) & \text{if $n$ is odd,}%
\end{cases}
\end{align*}
which is equivalent to the claimed result.
\end{proof}

\section{The 2-page crossing number}

%\section{The 2-page crossing number of~{\Large $K_{\lowercase{n}}$}} %{\Large$\mathbf{K_{n}}$}}
\label{s:2page}

We are concerned with $2$-page book drawings of $K_n$. Obviously any
line can be chosen as the spine, and for the rest of the paper we
will assume that the spine is the $x$-axis. Moreover, by
topological equivalence, we will assume that the vertices are
precisely the points with coordinates $(1,0),(2,0),\ldots,(n,0)$.

Consider a 2-page book drawing $D$ of $K_n$, and label the vertices
$1,2,\ldots,n$ from left to right. Our interest lies in crossing
optimal drawings, and it is readily seen that in every such drawing,
none of the edges $(1,2), (2,3), \ldots, (n-1,n), (n,1)$ is crossed.
Thus we may choose to place each of these edges in either the upper
closed halfplane (page) or in the lower closed halfplane (page).
Moreover, we may choose to place each of the edges $(1,2), (2,3),
\ldots, (n-1,n)$ completely on the spine, and this is the convention
we shall adopt for the rest of the paper. The edge $(n,1)$ may be
placed indistinctly in the upper page or in the lower page, and for
the rest of the paper we adopt the convention that it is place in
the upper page. Moreover, because we are only concerned with
good drawings, we assume without loss of generality that the rest of
the edges are semicircles.

Color the edges above or on
the spine blue and below the spine red, respectively. We construct an upper
triangular matrix which corresponds to the coloring of these edges, see
Figure~\ref{diagram}. We call this the \emph{2-page matrix} of $D$ and denote
it by $M\left(  D\right)  $. Label the columns of the 2-page matrix with
$2,\ldots,n$ from left to right and the rows with $1,2,\ldots,n-1$ from top to
bottom. For $i<j$ an entry $(i,j)$ (row,column) in the 2-page matrix $M(D)$ is
a point with the same color as the edge $ij$ in the drawing~$D$.

\vglue 0.2 cm
\noindent{\bf Remark.} {\em It follows from the convention laid out above that for
  every 2-page book drawing $D$, the entries $(1,2), (2,3), \ldots,
  (n-1,n)$ and $(1,n)$ in $M(D)$ are all blue.}
\vglue 0.2 cm
\begin{figure}[ptbh]
\begin{center}
\includegraphics[scale=1, trim=0in 0.2in 0in 0.1in,
clip=true ] {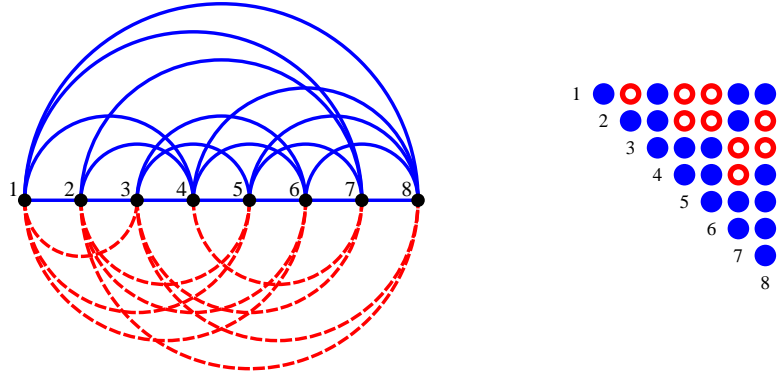}
\end{center}
\caption{Two-colored diagram for a 2-page book drawing $D$ of $K_{8}$ and the
corresponding 2-page matrix $M(D)$. Solid dots and lines represent blue edges.
Open dots and dashed lines represent red edges. From our
  convention to place the edges $(1,2),(2,3),\ldots,(n-1,n)$ on the
  spine and the edge $(1,n)$ in the upper page, it follows that all the entries in the main diagonal, as well
  as the upper right corner entry, are blue.
}%
\label{diagram}%
\end{figure}

We start by proving some basic properties of the 2-page matrix.

\begin{lemma}
\label{lem:kedge2matrix} Let $D$ be a 2-page book drawing of $K_{n}$. For
$1\leq i<j\leq n$, let $k$ be the sum of the number of points to the right
plus the number of points above the entry $(i,j)$ in the 2-page matrix of $D$,
which have the same color as $(i,j)$. Then the edge $ij$ is a $k$-edge. (It is
possible to have $k>\left\lfloor n/2\right\rfloor -1$.)
\end{lemma}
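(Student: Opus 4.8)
The plan is to determine, for the fixed edge $ij$ with $i<j$ and each remaining vertex $r$, the side of $\overrightarrow{ij}$ on which $r$ lies---that is, the orientation of the topological triangle $ijr$---and then to verify that the number of vertices on one fixed side is exactly the matrix count in the statement. The first reduction is that the orientation of $ijr$ depends only on the subdrawing induced by $\{i,j,r\}$, i.e.\ on the drawing of the corresponding $K_3$; it does not see any of the other $n-3$ vertices. Since in $K_3$ every two edges are adjacent and hence (the drawing being good) do not cross, this subdrawing is a simple closed curve whose orientation is a topological invariant.

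The key step is the following claim: if the three vertices are listed in left-to-right order as $a<b<c$, then the triangle traversed as $a\to b\to c\to a$ is counterclockwise if and only if the \emph{spanning} edge $ac$ (the one joining the two extreme points) lies in the upper page, and clockwise otherwise; the pages of the two inner edges $ab$ and $bc$ are irrelevant. To see this I would argue by isotopy. In the induced $K_3$ there is no vertex strictly between $a$ and $b$, nor between $b$ and $c$, so each inner arc may be flipped across the corresponding spine segment and contracted into an arbitrarily thin sliver hugging the spine, all without crossing the other two (adjacent) arcs; orientation is preserved throughout. Once both inner arcs hug the spine, the triangle manifestly bounds the region lying between the near-spine path $a\to b\to c$ and the semicircle $ac$, and its orientation is read off directly from the page of $ac$. (Alternatively, a signed-area computation shows that the contribution of the spanning arc dominates the combined contribution of the two inner arcs.) Note that the spanning edge $ac$ always joins two \emph{non-adjacent} vertices, so it is a genuine semicircle; thus the convention of drawing the edges $(i,i+1)$ on the spine and $(1,n)$ in the upper page never interferes, as those edges can occur only as inner arcs, whose page does not matter.

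It remains to turn this into a count. Fix $ij$ with $i<j$ and suppose first that $ij$ is blue. Writing $\{i,j,r\}$ in sorted order and comparing the traversal $i\to j\to r$ with the sorted traversal $a\to b\to c$ (a cyclic rotation preserves orientation, a reversal flips it), the claim yields: every \emph{middle} vertex ($i<r<j$) lies on the right; every \emph{right} vertex ($r>j$) lies on the left if and only if edge $ir$ is blue; and every \emph{left} vertex ($r<i$) lies on the left if and only if edge $rj$ is blue. Now the entries to the right of $(i,j)$ in the matrix are exactly the $(i,j')$ with $j'>j$, i.e.\ the edges $ir$ to right vertices, and those above $(i,j)$ are the $(i',j)$ with $i'<i$, i.e.\ the edges $rj$ to left vertices; each is counted precisely when its color equals that of $ij$. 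Hence the stated sum equals the number of vertices on the left of $\overrightarrow{ij}$, so $ij$ is a $k$-edge with this $k$. The case of $ij$ red is identical after exchanging the two pages (equivalently, reflecting across the spine), which swaps the two sides and turns ``blue'' into ``red'' throughout; there the same sum counts the vertices on the right. In both cases the middle vertices fall on the side that is \emph{not} counted, which is exactly why they never appear in the matrix tally.

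The only genuine obstacle is the orientation claim; everything after it is bookkeeping. Making the ``inner arcs are irrelevant'' step fully rigorous requires some care with the isotopy---ensuring that the sliver contraction and the page-flip never create a crossing with the two adjacent arcs---and this is precisely where the absence of intermediate vertices in the induced $K_3$ is used. I would present either this isotopy argument or the (completely routine) eight-case signed-area computation, whichever reads more cleanly, and then simply record the three positional cases above to match the matrix count.
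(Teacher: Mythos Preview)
Your proof is correct and follows essentially the same approach as the paper: identify, for each vertex $l\notin\{i,j\}$, on which side of $\overrightarrow{ij}$ it lies, and match the resulting conditions to the matrix entries above and to the right of $(i,j)$. The paper's proof simply asserts the orientation characterization (your three positional cases) without argument; your spanning-edge claim and isotopy justification supply the step the paper leaves implicit, but the underlying case analysis and the final bookkeeping are identical.
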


\begin{proof}
Let $1\leq i<j\leq n$ and assume that the edge $ij$ is blue (red). We count
the number of points $l$ in $D$ to the left (right) of $ij$. For
$l\not \in \{i,j\}$ the triangle $ijl$ is oriented counter-clockwise
(clockwise) if and only if either $l<i$ and the edge $lj$ is blue (red), or
$l>j$ and the edge $il$ is blue (red). In the first case these edges
correspond to blue (red) points above the entry $(i,j)$, and in the second
case to blue (red) points to the right of the entry $(i,j)$, respectively.
\end{proof}

In view of Lemma~\ref{lem:kedge2matrix} we say that the point in the
entry $(i,j)$ of the 2-page matrix of $D$ \emph{represents} a
$k$-edge if $ij$ is a $k $-edge (or an $(n-2-k)$-edge) in $D$.

\begin{lemma}
\label{manyofeach} For $k < n/2-1$ and for $1\leq j\leq k+1$, in the 2-page
matrix of a drawing $D$ of $K_{n}$ there are at least $2\left(  k+2-j\right)
$ points in row $j$ representing ${\leq}k$-edges. Similarly, for $n-k\leq
j\leq n$ there are at least $2\left(  k+1-n+j\right)  $ points in column $j$
representing ${\leq}k$-edges.
\end{lemma}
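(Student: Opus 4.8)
The plan is to translate the claim about rows and columns of the 2-page matrix into a statement about $k$-edges via Lemma~\ref{lem:kedge2matrix}, and to exploit the convention that the diagonal entries and the upper-right corner are blue. Fix $k<n/2-1$ and a row $j$ with $1\le j\le k+1$. The entry $(i,j')$ of the matrix records the color of edge $ij'$, and by Lemma~\ref{lem:kedge2matrix} the vertex in position $(i,j')$ represents a $t$-edge, where $t$ counts the same-colored points strictly to the right in its row plus those strictly above in its column. To show a point represents a ${\le}k$-edge it suffices to bound this count by $k$. The idea is that entries near the ends of a row, and entries of the minority color, cannot have many same-colored neighbors in the relevant directions, so they are forced to represent small-index edges.

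First I would make the counting concrete. Consider row $j$: its entries are $(j,j{+}1),(j,j{+}2),\ldots,(j,n)$. The diagonal entry $(j,j{+}1)$ and (when $j=1$) the corner $(1,n)$ are blue by the Remark. For an entry $(j,j')$ in this row, the quantity $k$ from Lemma~\ref{lem:kedge2matrix} is the number of same-colored entries to its right in row $j$ plus the number of same-colored entries above it in column $j'$. I would argue that among the rightmost entries of the row, and by symmetry using the two colors, one can always locate at least $2(k{+}2{-}j)$ positions whose same-color count in the admissible directions is at most $k$. The factor of $2$ and the shift by $j$ strongly suggest pairing the argument by color: for each color I expect roughly $(k{+}2{-}j)$ guaranteed ${\le}k$-edges, coming from the entries closest to the right end of the row (where there are few entries to the right) together with a boundary contribution from the forced-blue diagonal/corner entries.

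The cleanest route is probably an extremal/counting argument rather than an explicit listing. In row $j$ there are $n-j$ entries; the same-color count of the rightmost entry in its row is $0$, the next is at most $1$, and so on, so the last several entries in each color class are automatically ${\le}k$-edges as long as $k$ is not too small. I would formalize this by sorting, within row $j$, the blue entries and the red entries separately by their column index, and observing that within each color the same-color-to-the-right counts increase by at most one as we move leftward. Combined with the contribution of the above-in-column term, this yields that at least a fixed number of the rightmost entries of each color represent ${\le}k$-edges; summing the two color contributions and inserting the forced blue boundary entries produces the bound $2(k{+}2{-}j)$. The column statement then follows by the left–right/up–down symmetry of the matrix (transposing the roles of "right in the row" and "above in the column," and reflecting $j\mapsto n{+}1{-}j$), which maps the row claim with parameter $j$ to the column claim with parameter $n{+}1{-}j$.

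The main obstacle I anticipate is controlling the interaction between the two directional counts. The number from Lemma~\ref{lem:kedge2matrix} mixes a row-contribution and a column-contribution, and an entry could fail to be a ${\le}k$-edge because it has few same-colored neighbors to its right but many above it. Handling this cleanly requires a global argument: one cannot certify each candidate entry in isolation. I expect the resolution to come from counting same-colored entries along an entire hook (the cells to the right in the row plus the cells above in the column) and using a monotonicity or pigeonhole principle over the $n-j$ entries of the row to guarantee that sufficiently many of them have small total same-color count, with the boundary blue entries (diagonal and corner) anchoring the count to yield exactly the claimed $2(k{+}2{-}j)$. Getting the additive constants and the dependence on $j$ to match the stated bound, rather than merely an $\Omega(k)$ estimate, is where the bookkeeping will be delicate.
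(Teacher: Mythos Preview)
Your setup is right---use Lemma~\ref{lem:kedge2matrix} and look at the rightmost entries of each color in row $j$---but you have misidentified where the difficulty lies, and you are missing the actual idea that closes the argument.

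The ``main obstacle'' you describe is not an obstacle at all. In row $j$ with $1\le j\le k+1$ there are only $j-1$ entries above any position, so the column contribution to the count in Lemma~\ref{lem:kedge2matrix} is trivially at most $j-1$. Hence the $m$-th rightmost entry of a given color has count at most $(m-1)+(j-1)$, and the rightmost $k+2-j$ entries of each color automatically represent ${\le}k$-edges. No hook-counting, pigeonhole, or global argument is needed here, and the forced-blue diagonal and corner entries play no role whatsoever.

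What you are missing is the case where one color has fewer than $k+2-j$ entries in row $j$. If, say, blue occurs only $k+2-j-e$ times, the argument above gives only $2(k+2-j)-e$ edges, and you must find $e$ more. The paper's trick is to look at the \emph{leftmost} $e$ red entries: each has at least $n-2-k$ red entries to its right (since red occurs $n-2-k+e$ times in the row), so its count from Lemma~\ref{lem:kedge2matrix} is at least $n-2-k$, and therefore it represents a ${\le}k$-edge from the \emph{other side} (a $t$-edge with $t\ge n-2-k$ is also an $(n-2-t)$-edge with $n-2-t\le k$). Your proposal never hints at using large counts to certify ${\le}k$-edges, and the vague ``monotonicity or pigeonhole'' language would not produce this. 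The column statement then follows by the symmetric argument on columns, not by invoking boundary blue entries.
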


\begin{proof}
For $1\leq j\leq k+1$, in row $j$ the rightmost $k+2-j$ points of each color
represent ${\leq}k$-edges as they have at most $k+1-j$ points of their color
to the right and at most $j-1$ on top. So if each color appears at least
$k+2-j$ times in row $j$, we have guaranteed $2\left(  k+2-j\right)  $ ${\leq
}k$-edges in row $j$. If one of the colors appears fewer than $k+2-j $ times,
so that there are $k+2-j-e$ blue points in row $j$ for some $1\leq e\leq
k+2-j$, then there are $n-j-\left(  k+2-j-e\right)  =n-2-k+e$ red points in
this row. In this case we claim that also the leftmost $e$ red points in this
row represent ${\leq}k$-edges. In fact, for $1\leq i\leq e$, the $i$-{th} red
point (from the left) in row $j$, has exactly $n-2-k+e-i$ red points to the
right and perhaps more red points on top. Since for $n \geq2$ we have
$n-2-k+e-i \geq n/2-k$, this $i$-{th} red point also represents a ${\leq}%
k$-edge. The equivalent result holds for the rightmost $k+1$ columns.
\end{proof}

\begin{lemma}
\label{twoofeach} For $0 \leq j < n/2 -1$, in the 2-page matrix of a drawing
$D$ of $K_{n}$ there are two points in column $n$ which correspond to
$j$-edges in $D$. For $n$ even there exists one such point in column $n$
corresponding to an $(n/2 -1)$-edge in $D$.
\end{lemma}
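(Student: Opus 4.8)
The plan is to read off all $k$-edge information in column $n$ directly from the coloring of that column, exploiting that column $n$ is the rightmost column of $M(D)$. First I would observe that for any entry $(i,n)$ there are no entries to its right, so by Lemma~\ref{lem:kedge2matrix} the value $k$ associated with the edge $in$ equals the number of entries of the same color as $(i,n)$ lying strictly above it in column $n$. Reading column $n$ from top to bottom, this means that if there are $b$ blue and $r$ red entries (with $b+r=n-1$, and $b\ge 1$ since the corner entry $(1,n)$ is blue), then the $m$-th blue entry from the top has value $m-1$ and the $m$-th red entry from the top has value $m-1$. Hence the blue entries realize each of the values $0,1,\dots,b-1$ exactly once, and the red entries realize each of $0,1,\dots,r-1$ exactly once.

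Next I would translate the statement into these values. Since a $k$-edge is the same as an $(n-2-k)$-edge, an entry of column $n$ represents a $j$-edge precisely when its associated value is $j$ or $n-2-j$. Fix $j$ with $0\le j<n/2-1$, so that $j<n-2-j$; I must show that at least two entries of column $n$ have value $j$ or $n-2-j$. I would argue by cases according to whether both colors are long enough. If $b>j$ and $r>j$, then both a blue and a red entry have value exactly $j$, giving two $j$-edges. Otherwise one color is short, say $b\le j$; then $r=n-1-b\ge n-1-j>n-2-j>j$, so the red entries realize both the value $j$ and the value $n-2-j$, again producing two distinct $j$-edges. The case $r\le j$ is symmetric. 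This settles the first assertion.

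For the second assertion, with $n$ even and $j=n/2-1$, the symmetry collapses: here $n-2-j=j$, so an entry represents an $(n/2-1)$-edge exactly when its value equals $n/2-1$. The number of such entries is the number of colors whose count in $\{b,r\}$ exceeds $n/2-1$. Since $b+r=n-1$ is odd, exactly one of $b,r$ is at least $n/2$ and the other is at most $n/2-1$; hence exactly one entry of column $n$ has value $n/2-1$, which gives the single $(n/2-1)$-edge claimed.

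The main obstacle, and the reason the statement is not immediate, is that the coloring of column $n$ can be arbitrarily unbalanced (subject only to the corner and diagonal entries being blue), so neither color alone is guaranteed to supply two entries of value $j$. The crux is the $j\leftrightarrow(n-2-j)$ identification of edges: when one color is too short to reach value $j$ twice, the constraint $b+r=n-1$ forces the other color to be long enough to realize both $j$ and $n-2-j$. Getting the threshold bookkeeping exactly right—so that $j<n/2-1$ yields two edges while the borderline value $j=n/2-1$ for even $n$ yields exactly one—is where the care is needed.
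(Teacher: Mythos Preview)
Your argument is correct and follows essentially the same approach as the paper's proof: both read off the values in column $n$ via Lemma~\ref{lem:kedge2matrix}, split into the case where each color appears more than $j$ times versus the case where one color is short, and in the short case use the $j\leftrightarrow(n-2-j)$ identification to find two $j$-edges among the long color. Your presentation with the explicit counts $b,r$ is slightly cleaner, but the logic is identical.
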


\begin{proof}
We follow the lines of the proof of Lemma~\ref{manyofeach}. Consider the
points in column $n$ in order from top to bottom. By
Lemma~\ref{lem:kedge2matrix} the $i$-{th} vertex of a color corresponds to an
$(i-1)$-edge. Thus, if there are at least $j+1$ vertices for each color we are
done. Otherwise assume without loss of generality that there are $j+1-e$ blue
points in column $n$ for some $1 \leq e \leq j+1$. Then there are $n-1-(j+1-e)
= n-j+e-2$ red points in this column. For $1 \leq i \leq\lfloor n/2 \rfloor$
the $i$-{th} red point corresponds to an $(i-1)$-edge, and for $\lfloor n/2
\rfloor+1 \leq i \leq n-j+e-2$ the $i$-{th} red point corresponds to an $(i-1)
= (n-i-1)$-edge. Thus we get two red points corresponding to $j$-edges for
$i=j+1$ and $i=n-j-1$. Finally, observe that these two points are different
for $j < n/2 -1$. For $n$ even we get only one such point for $j=n/2-1$.
\end{proof}

The next theorem gives a lower bound on the number of ${\leq}{\leq}k$-edges,
which will play a central role in deriving our main result. We need the
following definitions. Let $D$ be a good drawing of $K_{n}$. Let $l$
be a vertex of $K_n$, and let $D^{\prime}$ be
the (evidently, also good) drawing of $K_{n-1}$ obtained by
deleting from $D$ the vertex $l$ and its adjacent edges.
Note that a $k$-edge $ij$ in $D^{\prime}$ is a
$k$-edge or a $(k+1)$-edge in $D$. Indeed, if $ij$ has exactly $k$ points to
its right in $D^{\prime}$ (an equivalent argument holds if the $k$ points are
on its left), then there are $k$ or $k+1$ points to the right of $ij$ in $D$
depending on whether $l$ is to the left or to the right, respectively, of
$ij$. We say that a $k$-edge in $D$ is $\left(  D,D^{\prime}\right)
$-\emph{invariant} if it is also a $k$-edge in $D^{\prime}$. Whenever it is
clear what $D$ and $D^{\prime}$ are, we simply say that an edge is invariant.
A $\left(  D,D^{\prime}\right)  $\emph{-invariant }$\leq k$\emph{-edge} is a
$\left(  D,D^{\prime}\right)  $-invariant $j$-edge for some $0\leq j\leq k\leq
n/2-1$. Denote by $E_{\leq k}(D,D^{\prime})$ the number of $(D,D^{\prime}%
)$-invariant $\leq k$-edges.

\begin{theorem}
\label{ThreeThrees}Let $n\geq3$. For every 2-page book drawing $D$ of $K_{n} $
and $0\leq k < n/2 -1$, we have
\[
E_{{\leq}{\leq}k}\left(  D\right)  \geq3\tbinom{k+3}{3}.
\]

\end{theorem}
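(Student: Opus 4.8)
The plan is to induct on $n$, deleting the rightmost vertex $n$ and letting $D'$ be the resulting $2$-page book drawing of $K_{n-1}$. The base cases are $n$ small and, in general, $k=0$, where the claim reads $E_{{\leq}{\leq}0}(D)=E_0(D)\ge 3=3\tbinom{3}{3}$, i.e.\ the elementary fact that a $2$-page drawing has at least three $0$-edges. Assuming $k\ge 1$, I apply the inductive hypothesis to $D'$ with parameter $k-1$: the hypothesis $k<n/2-1$ forces $k-1<(n-1)/2-1$, so $E_{{\leq}{\leq}k-1}(D')\ge 3\tbinom{k+2}{3}$. Since by Pascal's rule $3\tbinom{k+2}{3}+3\tbinom{k+2}{2}=3\tbinom{k+3}{3}$, it suffices to prove the recursive increment
\[
E_{{\leq}{\leq}k}(D)\ \ge\ E_{{\leq}{\leq}k-1}(D')+3\tbinom{k+2}{2}.
\]

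The core is an exact identity comparing $D$ and $D'$. A non-$n$ edge that is an $i$-edge of $D'$ is either a $(D,D')$-invariant $i$-edge of $D$ or becomes an $(i+1)$-edge. Splitting the edges of $D$ into those not incident to $n$ and those in column $n$, and tracking how each non-$n$ index shifts, I establish
\[
E_{\le j}(D)=E_{\le j-1}(D')+E_{j}(D,D')+c_j ,
\]
where $E_j(D,D')$ is the number of invariant $j$-edges and $c_j$ counts the $n$-incident edges that are $\le j$-edges. Summing over $0\le j\le k$ and using the empty-sum convention $E_{\le -1}(D')=0$ telescopes the first term to $E_{{\leq}{\leq}k-1}(D')$, turns the second into $E_{\leq k}(D,D')$, and turns the third into a weighted count of column $n$, giving
\[
E_{{\leq}{\leq}k}(D)=E_{{\leq}{\leq}k-1}(D')+E_{\leq k}(D,D')+\sum_{i=0}^{k}(k+1-i)\,e_i ,
\]
with $e_i$ the number of $i$-edges in column $n$.

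Two of the three required copies of $\tbinom{k+2}{2}$ come for free from the last sum: by Lemma~\ref{twoofeach}, for each $0\le i\le k<n/2-1$ there are at least two $i$-edges in column $n$, so $\sum_{i=0}^{k}(k+1-i)e_i\ge 2\sum_{i=0}^{k}(k+1-i)=2\tbinom{k+2}{2}$. Hence the induction closes as soon as the invariant edges supply the remaining copy, i.e.\ once I prove
\[
E_{\leq k}(D,D')\ \ge\ \tbinom{k+2}{2}.
\]

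I expect this last inequality to be the main obstacle, and the place where the genuinely two-dimensional nature of the problem bites: it is exactly the ``third'' $\tbinom{k+2}{2}$ whose rectilinear per-level analogue $E_{\le k}\ge 3\tbinom{k+2}{2}$ fails topologically. To obtain it I would apply Lemma~\ref{manyofeach} to the top rows $1,\dots,k+1$ of the $2$-page matrix: these encode edges incident to the small-index vertices, which lie far from the deleted rightmost vertex $n$, so $n$ should fall on the majority side of most of them, making the corresponding $\le k$-edges $(D,D')$-invariant via the orientation criterion of Lemma~\ref{lem:kedge2matrix}. The delicate points are to exclude the $\le k$-edges lying in column $n$ (these are $n$-incident and already accounted for in the $e_i$ term, so they must not be double counted) and to verify invariance rather than merely smallness. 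Finally I would revisit the parity boundary: when $n$ is odd the largest admissible value $k=(n-3)/2$ is out of range for $D'$, so that case, together with the symmetric option of deleting vertex $1$ instead of $n$, would be treated separately.
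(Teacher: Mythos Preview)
Your outline matches the paper's proof almost exactly: induct on $n$ by deleting vertex $n$, derive the identity $E_{\leq\leq k}(D)=E_{\leq\leq k-1}(D')+2\binom{k+2}{2}+E_{\leq k}(D,D')$ via Lemma~\ref{twoofeach}, apply the inductive hypothesis, and reduce to showing $E_{\leq k}(D,D')\geq\binom{k+2}{2}$.

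Two corrections. First, Lemma~\ref{manyofeach} is not the right tool for the invariant inequality: it only locates $\le k$-edges, not invariant ones. The paper instead argues directly row by row, and the key observation you are missing is that the \emph{color of the entry $(j,n)$} decides which entries of row $j$ are invariant. If $(j,n)$ is blue, then every red entry in row $j$ with at most $k$ red points above-or-right is a $(D,D')$-invariant $\le k$-edge (removing the blue $(j,n)$ does not change its red count), while every blue entry with at least $n-2-k$ blue points above-or-right is also invariant (its minority-side count $n-2-i$ is unchanged when one blue neighbor is removed). A count in the spirit of Lemma~\ref{manyofeach} then gives at least $k+2-j$ invariant $\le k$-edges in row $j$ for each $1\le j\le k+1$, summing to $\binom{k+2}{2}$. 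Your worry about double-counting column-$n$ edges is moot: edges incident to $n$ do not survive to $D'$, so they are never invariant by definition.

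Second, the parity concern at the end is a false alarm. From $k<n/2-1$ you get $k-1<n/2-2<(n-1)/2-1$, so the inductive hypothesis on $D'$ at parameter $k-1$ is always in range; and the convention $E_{\leq\leq -1}=0$ already absorbs the case $k=0$ without a separate argument. No special treatment of odd $n$, and no need to delete vertex $1$ instead of $n$.
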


\begin{proof}
We proceed by induction on $n$. The induction base $n=3$ holds trivially. For
$n\geq4$, consider a 2-page book drawing $D$ of $K_{n}$ with horizontal spine
and label the vertices from left to right with $1,2,\ldots,n$. Remove the
point $n$ and all incident edges to obtain a 2-page book drawing $D^{\prime}$
of $K_{n-1}$. To bound $E_{\leq{\leq}k}\left(  D\right)  $, recall that
\begin{equation}
E_{{\leq}{\leq}k}\left(  D\right)  =\sum\limits_{j=0}^{k}\left(  k+1-j\right)
E_{j}\left(  D\right)  .\label{AtmostAtmostIdent}%
\end{equation}
All edges incident to $n$ are in $D$ but are not in $D^{\prime}$. In fact, by
Lemma~\ref{twoofeach}, there are two $j$-edges adjacent to the vertex $n$ for
each $0\leq j\leq k\leq\left\lfloor n/2\right\rfloor -2$. These edges
contribute with $2\sum_{j=0}^{k}(k+1-j)=2\tbinom{k+2}{2}$ to
Equation~(\ref{AtmostAtmostIdent}). We next compare
Equation~(\ref{AtmostAtmostIdent}) to
\begin{equation}
E_{{\leq}{\leq}k-1}\left(  D^{\prime}\right)  =\sum\limits_{j=0}^{k-1}\left(
k-j\right)  E_{j}\left(  D^{\prime}\right)  .\label{Atmostlower}%
\end{equation}
Any edge contributing to Equation~(\ref{Atmostlower}) also contributes to
Equation~(\ref{AtmostAtmostIdent}), but possibly with a different value. As
observed before, a $j$-edge in $D^{\prime}$ is a $j$-edge or a $\left(
j+1\right)  $-edge in $D$. A $j$-edge in $D^{\prime}$ contributes to
Equation~(\ref{Atmostlower}) with $k-j$. A $j$-edge and a $(j+1)$-edge in $D$
contribute to Equation~(\ref{AtmostAtmostIdent}) with $k+1-j$ and $k-j$,
respectively. This is a gain of $+1$ or $0$, respectively, towards $E_{{\leq
}{\leq}k}(D)$ when compared to $E_{{\leq}{\leq}k-1}(D^{\prime})$. Finally, a
$k$-edge in both $D$ and $D^{\prime}$ does not contribute to
Equation~(\ref{Atmostlower}) and contributes to
Equation~(\ref{AtmostAtmostIdent}) with $+1.$ Therefore
\[
E_{{\leq}{\leq}k}(D)=E_{{\leq}{\leq}k-1}(D^{\prime})+2\binom{k+2}{2}+E_{\leq
k}(D,D^{\prime}).
\]
By induction hypothesis, $E_{{\leq}{\leq}k-1}(D^{\prime})\geq3\tbinom{k+2}{3}$
and thus
\[
E_{{\leq}{\leq}k}(D)\geq3\binom{k+2}{3}+2\binom{k+2}{2}+E_{\leq k}%
(D,D^{\prime})=3\binom{k+3}{3}-\binom{k+2}{2}+E_{\leq k}(D,D^{\prime}).
\]
We finally prove that
\begin{equation}
E_{\leq k}(D,D^{\prime})\geq\binom{k+2}{2}.\label{lastbinomial}%
\end{equation}
In fact, we prove that for each $1\leq j\leq k+1$ there are at least
$k+2-j$ points in row $j$ of $M(D)$ that represent
$(D,D^{\prime})$-invariant $\leq k$-edges. Suppose that the edge
$jn$ is blue (the equivalent argument holds when $jn$ is red). Then
any red point in row $j$ with $i\leq k$ red points above or to its
right in $M(D)$ represents a $(D,D^{\prime})$-invariant $i$-edge;
and any blue point in row $j$ with $i\geq n-2-k$ blue points above
or to its  right represents a $(D,D^{\prime})$-invariant
$(n-2-i)$-edge. Thus, the first $k+2-j$ red points from the right in
row $j$ (if they exist) represent $(D,D^{\prime})$-invariant $\leq
k$-edges as they have at most $k+2-j-1$ red points to the right and
at most $j-1$ red points above in both $M\left(  D\right)  $ and
$M\left(  D^{\prime}\right)  $. If there are fewer than $k+2-j$ red
points in row $j$ of $M(D)$, say $k+2-j-e$ for some $1\leq e\leq
k+2-j$, then the first $e$ blue points in row $j$ of $M(D)$ from the
left represent ${\leq}k$-edges, because they have at least
$n-j-e\geq n-j-k-2+j=n-k-2$ blue points to their right. Hence there
are at least $k+2-j-e$ red points and at least $e$ blue points (for
a total of at least $k+2-j$ points) that represent
$(D,D^{\prime})$-invariant $\leq k$-edges in row $j$ of $M(D)$.
Summing over all $1\leq j\leq k+1$, we get that
\[
E_{\leq k}(D,D^{\prime})\geq\sum\limits_{j=1}^{k+1}\left(
k+2-j\right) =\binom{k+2}{2}.\qedhere
\]

\end{proof}

We are now ready to prove our main result, namely that the 2-page crossing
number of $K_{n}$ is $Z(n)$.

\begin{theorem}
\label{TheResult} For every positive integer $n$, $\nu_{2}(K_{n})=Z(n)$.
%For every positive integer $n$, $\nu _{2}\left( K_n\right) =Z\left(
%n\right)$.

\end{theorem}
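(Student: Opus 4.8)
The plan is to combine the two results already in hand: the exact identity of Proposition~\ref{IdentityForAtmostAtmost}, which writes $\crg(D)$ in terms of the quantities $E_{\le\le k}(D)$, and the lower bound $E_{\le\le k}(D)\ge 3\binom{k+3}{3}$ of Theorem~\ref{ThreeThrees}. Since the upper bound $\nu_2(K_n)\le Z(n)$ is already known from the 2-page drawings of Bla\v{z}ek and Koman~\cite{BC64}, it suffices to establish the matching lower bound. To this end I would fix an arbitrary 2-page book drawing $D$ of $K_n$ (which, under the conventions of this section, is good, so both earlier statements apply) and bound $\crg(D)$ from below by substituting the estimates of Theorem~\ref{ThreeThrees} into the identity of Proposition~\ref{IdentityForAtmostAtmost}. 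Because $D$ is arbitrary, any such bound transfers to $\nu_2(K_n)$.

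The substitution must be handled with care because of the parity term $-\frac{1}{2}(1+(-1)^n)E_{\le\le\lfloor n/2\rfloor-2}(D)$. For $n$ odd this term vanishes, so the identity reads $\crg(D)=2\sum_{k=0}^{\lfloor n/2\rfloor-2}E_{\le\le k}(D)-\frac{1}{2}\binom{n}{2}\lfloor\frac{n-2}{2}\rfloor$; every index satisfies $k\le\lfloor n/2\rfloor-2<n/2-1$, so Theorem~\ref{ThreeThrees} applies term by term and all coefficients are positive. For $n$ even the parity term equals $-E_{\le\le n/2-2}(D)$, whose sign is wrong for a direct substitution. The key maneuver is to peel the top summand $k=n/2-2$ off the sum so that its coefficient $2$ combines with the $-1$ of the parity term to leave $+1$; after this rearrangement every surviving coefficient is again positive and every index still satisfies $k<n/2-1$, so Theorem~\ref{ThreeThrees} applies throughout. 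I expect this sign bookkeeping in the even case to be the only genuine obstacle in assembling the proof.

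With all coefficients positive, replacing each $E_{\le\le k}(D)$ by $3\binom{k+3}{3}$ gives a lower bound that is a sum of binomial coefficients. Collapsing it through the hockey-stick identity $\sum_{k=0}^{m}\binom{k+3}{3}=\binom{m+4}{4}$ yields closed forms: essentially $6\binom{(n+3)/2}{4}$ in the odd case and $6\binom{n/2+1}{4}+3\binom{n/2+1}{3}$ in the even case, each corrected by $-\frac{1}{2}\binom{n}{2}\lfloor\frac{n-2}{2}\rfloor$. The remaining work is a routine verification, treating $n=2m$ and $n=2m+1$ separately, that these expressions simplify to $\frac{1}{4}m(m-1)^2(m-2)=Z(2m)$ and $\frac{1}{4}m^2(m-1)^2=Z(2m+1)$; in both cases one pulls out the common factor $m(m-1)/4$ and factors the remaining quadratic to confirm the match. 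This establishes $\nu_2(K_n)\ge Z(n)$, which together with the known upper bound gives $\nu_2(K_n)=Z(n)$. The small cases $n\le 3$, where both sides vanish and the relevant sums are empty, are trivial and can be dispatched at the outset.
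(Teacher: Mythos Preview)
Your proposal is correct and follows essentially the same route as the paper: cite the Bla\v{z}ek--Koman upper bound, then combine Proposition~\ref{IdentityForAtmostAtmost} with Theorem~\ref{ThreeThrees} and collapse the resulting sum via the hockey-stick identity to obtain $Z(n)$. The only difference is cosmetic: the paper writes the lower bound in one line as $2\sum_{k}3\binom{k+3}{3}-\frac{1}{2}\binom{n}{2}\lfloor\frac{n-2}{2}\rfloor-\frac{3}{2}(1+(-1)^n)\binom{\lfloor n/2\rfloor+1}{3}$ without commenting on the sign of the parity term, whereas you make explicit the (correct) observation that the net coefficient of $E_{\le\le\lfloor n/2\rfloor-2}(D)$ is $+1$ in the even case, which is what justifies the substitution; your even-case closed form $6\binom{n/2+1}{4}+3\binom{n/2+1}{3}$ is equal to the paper's $6\binom{n/2+2}{4}-3\binom{n/2+1}{3}$ by Pascal's rule.
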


\begin{proof}
The cases $n=1$ and $n=2$ are trivial. Let $n\geq3$. As we mentioned
above, 2-page book drawings with $Z\left(  n\right)  $ crossings
were constructed by Bla\v{z}ek and Koman\ \cite{BC64} (see also Guy
et al.\ \cite{Guy68}, Damiani et al.\ \cite{DaDASa}, Harborth
\cite{Harb02}, and Shahrokhi et al.\ \cite{SSSV96}.) These drawings
show that $\nu_{2}\left( K_{n}\right)  \leq Z\left( n\right)  $. For
the lower bound, let $D$ be a 2-page book drawing of $K_{n}$. Using
Proposition~\ref{IdentityForAtmostAtmost} and
Theorem~\ref{ThreeThrees}, we obtain
\begin{align*}
\crg\left(  D\right)   &  \geq2\sum\limits_{k=0}^{\left\lfloor
n/2\right\rfloor -2}3\binom{k+3}{3}-\frac{1}{2}\binom{n}{2}\left\lfloor
\frac{n-2}{2}\right\rfloor -\frac{3}{2}\left(  1+\left(  -1\right)
^{n}\right)  \binom{\left\lfloor \frac{n}{2}\right\rfloor +1}{3}\\
&  =6\binom{\left\lfloor \frac{n}{2}\right\rfloor +2}{4}-\frac{1}{2}\binom
{n}{2}\left\lfloor \frac{n-2}{2}\right\rfloor -\frac{3}{2}\left(  1+\left(
-1\right)  ^{n}\right)  \binom{\left\lfloor \frac{n}{2}\right\rfloor +1}{3}\\
&  =%
\begin{cases}
\tfrac{1}{64}\left(  n-1\right)  ^{2}\left(  n-3\right)  ^{2} & \text{if $n$
is odd},\\
\tfrac{1}{64}n\left(  n-2\right)  ^{2}\left(  n-4\right)  & \text{if $n$ is
even,}%
\end{cases}
=Z(n).
\end{align*}\vspace{-32.5pt}
\[
 \qedhere
\]
\end{proof}

\section{Crossing optimal configurations}\label{s:OptConfig}

In all this section $D$ denotes a 2-page book drawing of $K_{n}$ and $M(D)$
its 2-page matrix. We say that $D$ is \emph{crossing optimal}  if $\nu_{2}(D)=Z(n)$. Theorem \ref{mainstructure} in
Subsection~\ref{section:mainstructure} describes
the general structure of the crossing optimal 2-page book drawings of $K_{n}$.
We use it to prove that, up to the equivalence described below, there is a
unique crossing optimal 2-page book drawing of $K_{n}$ when $n$ is even and,
in contrast, there exists an exponential number of non-equivalent crossing optimal 2-page book drawings of $K_{n}$ when $n$ is odd.

\subsection{Equivalent drawings}
\label{section:equivalent}

Let $D$ be a 2-page book drawing of $K_{n}$. Recall that we
are assuming that the vertices of $D$ are the points $\left\{
\left( i,0\right) :1\leq i\leq n\right\}$. Consider the following
transformation $f$ that results in the 2-page book drawing $f\left(
D\right) $ of $K_{n}$: move the vertex $(1,0)$ to the point $(n,0)$,
and for every $2\leq k\leq n$ move the vertex $(k,0)$ to the vertex
$(k-1,0)$. That is, if an edge $1j$ was drawn above (below) the
spine in $D$, then the edge $(j-1)(n)$ is drawn above (below) the
spine in $f\left( D\right) $; for all other edges $ij$ with $1<i<j
\leq n$, if $ij$ was drawn above (below) the spine in~$D$, then the
edge $(i-1)(j-1)$ is drawn above (below) the spine in $f\left(
D\right) $. Note that $D$ and $f\left(
  D\right) $ have the same number of crossings, and
$f^{n}(D)=D$. There are two other natural transformations of a
drawing $D$: A vertical reflection $g(D)$ about the line with
equation $x=n/2$ and a horizontal reflection $h(D)$ about the spine
(or $x$-axis). In $g(D)$ an edge $ij$ is drawn above (below) the
spine if the edge $(n+1-j)(n+1-i)$ is drawn above (below) the spine
in $D $. In $h(D)$ an edge $ij$ is drawn above (below) the spine if
the edge $ij$ is drawn below (above) the spine in~$D$. Note that
$g^{2}(D)=h^{2}(D)=D$. Given a 2-page drawing $D$, all drawings
obtained by compositions of these transformations from $D$ are said
to be \emph{equivalent }to $D$. All drawings obtained this way are
topologically isomorphic (homeomorphic) and thus they all have the
same number of crossings as $D$. The group spanned by these
transformations is isomorphic to the direct sum of the dihedral
group $D_{2n}$ and the group with 2 elements $\mathbb{Z}_{2}$. The
set $\{f,g,h\}$ is a set of generators such that
$g^{2}=h^{2}=f^{n}=1$, $g\circ f=f^{-1}\circ g$, $h\circ f=f\circ
h$, and $g\circ h=h\circ g$. Thus the $4n$ transformations in the
group can be parametrized by $h^{a}\circ g^{b}\circ f^{i}$ with
$i\in\{0,1,\ldots,n-1\}$ and $a,b\in\{0,1\}$.

Now we describe these transformations in the 2-page matrix diagram of $D$: To
obtain $M\left(  f\left(  D\right)  \right)  $ from $M\left(  D\right)  $, we
simply rotate $90$ degrees counterclockwise the first row of $M\left(  D\right)  $ and use it as the $n^{th}$
column of $M\left(  f\left(  D\right)  \right)  $. The diagram $M\left(
g\left(  D\right)  \right)  $ is obtained from $M\left(  D\right)  $ by
reflecting with respect to the diagonal $\{(i,n+1-i):1\leq i\leq\lfloor
n/2\rfloor\}$. Finally, $M(h(D))$ is obtained by switching the color of every
point except those that join consecutive vertices on the spine or the point
$(1,n)$. We can place $M\left(  D\right)  $ and $M\left(  f\left(  D\right)
\right)  $ together so that the part they have in common overlaps. Doing this
for $M\left(  f^{m}\left(  D\right)  \right)  $ for all integers $m$ we obtain
a periodic double infinite strip with period $n$ and with a horizontal section
that is $n-1$ units wide. We call this the \emph{strip diagram }of $D$, or of
$f^{m}\left(  D\right)  $ for any integer $m$. (See Figure~\ref{strip}.) Any right triangular region
with the same dimensions as $M\left(  D\right)  $ obtained from the strip
diagram of $D$ by a horizontal and a vertical cut is the matrix diagram of a
drawing equivalent to $D$ and thus it has the same number of crossings as $D$.

\begin{figure}
[ptb]
\begin{center}
\includegraphics[
scale=1]
{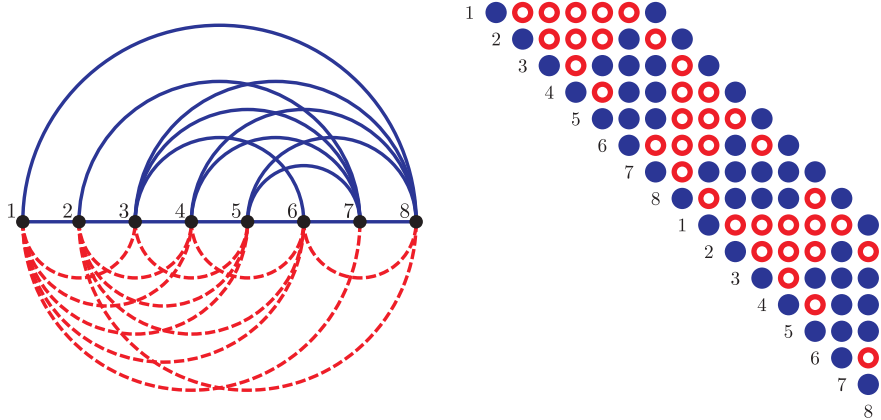}
\caption{A $2$-page drawing of $K_8$ and its strip diagram.}
\label{strip}
\end{center}
\end{figure}

\subsection{Properties of crossing optimal drawings}
%\subsection{Lemmas for Theorem \ref{mainstructure}}

We start with a couple of definitions. Consider the entry $\left(
i,j\right) $ of $M(D)$. We order the entries in row $i$ to the left
of $\left( i,j\right)  $ as follows: first all entries, from right
to left, whose color differs to that of $(i,j)$, followed by all
other entries (those with the same color as $(i,j)$) from left to
right. This is called the \emph{order associated to }$\left(
i,j\right)  $. Observe that this is the order in which the edges
$il$ ($i<l<j$) appear in the 2-page drawing, ordered bottom to top
if the edge $ij$ is blue and top to bottom if the edge $ij$ is red.
Let $c$ be an integer such that $0\leq c\leq n-1$. Denote by $D_{c}$
the subgraph of $D$ obtained by deleting the $c $ right-most points
of $D$, or equivalently, $M(D_{c})$ is obtained by deleting the last
$c$ columns of $M\left(  D\right)  $. The following results strongly
rely on the proof of Theorem \ref{ThreeThrees}.

\begin{lemma}
\label{OrderObservation}Suppose that $l\geq i+m+1$ for some integers
$1\leq i<l<j\leq n$ and $1\leq m<j-i$. The entry $(i,l)$ is one of
the first $m$ entries in the order associated to $(i,j)$ if and only
if $(i,l)$ and $(i,j)$ have different colors.
\end{lemma}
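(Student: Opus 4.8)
The plan is to reduce the statement to an explicit computation of the position of the entry $(i,l)$ in the order associated to $(i,j)$, and then to compare that position with $m$. Write $X$ for the color of $(i,j)$ and $Y$ for the opposite color. By the definition of the order, the entries of row $i$ strictly to the left of column $j$ are listed as the $Y$-colored ones in decreasing order of column, followed by the $X$-colored ones in increasing order of column. Hence, if $(i,l)$ is $Y$-colored its position is $1+d_{>}(l)$, where $d_{>}(l)$ is the number of $Y$-colored entries $(i,l')$ with $l<l'<j$; and if $(i,l)$ is $X$-colored its position is $N_Y+s_{\le}(l)$, where $N_Y$ is the total number of $Y$-colored entries to the left of column $j$ and $s_{\le}(l)$ is the number of $X$-colored entries $(i,l')$ with $i<l'\le l$. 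Thus it suffices to show that, under $l\ge i+m+1$, a $Y$-colored entry $(i,l)$ has position at most $m$, whereas an $X$-colored entry $(i,l)$ has position exceeding $m$.

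I would dispatch the ``only if'' direction first, since it is immediate from the order's definition. Suppose $(i,l)$ has the same color $X$ as $(i,j)$. Then every $Y$-colored entry precedes $(i,l)$, and every $X$-colored entry in a column smaller than $l$ also precedes it; consequently \emph{all} of the $l-i-1$ entries $(i,l')$ with $i<l'<l$ come before $(i,l)$, irrespective of their color. Since $l\ge i+m+1$ forces $l-i-1\ge m$, the entry $(i,l)$ lies in position at least $(l-i-1)+1=l-i\ge m+1$, so it is not among the first $m$. Contrapositively, an entry among the first $m$ entries must be $Y$-colored, i.e.\ of color different from $(i,j)$.

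The ``if'' direction is where the real work lies, and I expect it to be the main obstacle. Here $(i,l)$ is $Y$-colored and I must show its position $1+d_{>}(l)$ is at most $m$, that is, that at most $m-1$ further $Y$-colored entries lie strictly between columns $l$ and $j$. The hypothesis $l\ge i+m+1$ by itself only controls the width $j-l-1\le j-i-m-2$ of the window $(l,j)$, not how many entries inside it carry the color $Y$, so the definition of the order alone does not close the argument; some structural input about the coloring of $M(D)$ is needed. The route I would take is to convert ``$Y$-colored entries to the right of $(i,l)$'' into a statement about $(\le k)$-edges through Lemma~\ref{lem:kedge2matrix}, and then bound their number by $m-1$ using the row-wise $(\le k)$-edge estimates of Lemma~\ref{manyofeach} together with the tightness bookkeeping from the proof of Theorem~\ref{ThreeThrees}, on which (as the text notes) the results of this subsection rely. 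I would therefore carry out the position computation of the first paragraph, settle the ``only if'' direction by the counting above, and reduce the ``if'' direction to the color-count bound $d_{>}(l)\le m-1$, which is the step I anticipate will require the structural properties of $D$ rather than the order's definition alone.
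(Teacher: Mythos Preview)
Your treatment of the ``only if'' direction is precisely the paper's argument: if $(i,l)$ shares the color of $(i,j)$, then every entry $(i,l')$ with $i<l'<l$ precedes it in the order (the different-colored ones because all such entries come first, the same-colored ones because those are listed left to right), so $(i,l)$ sits in position at least $l-i\ge m+1$. The paper's entire proof is that single observation.

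Your suspicion about the ``if'' direction is correct, but the obstruction is more serious than you diagnose: the converse is simply \emph{false} for general $2$-page book drawings, so no appeal to Lemma~\ref{lem:kedge2matrix}, Lemma~\ref{manyofeach}, or Theorem~\ref{ThreeThrees} can salvage it. For a counterexample, take any $D$ with $j\ge i+4$ in which $(i,j)$ is blue while $(i,j-1)$ and $(i,j-2)$ are both red; set $m=1$ and $l=j-2$, so that $l\ge i+m+1$. Then $(i,l)$ differs in color from $(i,j)$, yet its position in the order associated to $(i,j)$ is $2>m$, since the red entry $(i,j-1)$ precedes it. Thus the biconditional as stated does not hold; the paper proves only the ``only if'' implication, and in its applications that is the direction actually invoked. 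You should drop the attempt to establish the converse.
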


\begin{proof}
Note that if $(i,l)$ and $(i,j)$ have the same color, then all
entries to the left of $(i,l)$ come before $(i,l)$ in the order
associated to $(i,j)$.
\end{proof}

\begin{lemma}
\label{allprevious}Let $p$ be an integer such that $0\leq p\leq\left\lfloor
n/2\right\rfloor -2$. Suppose that $E_{\leq k}(D,D_{1})=\tbinom{k+2}{2}$ for
all $0\leq k\leq p$. Then $M\left(  D\right)  $ satisfies that for $1\leq
i\leq p+1$ in row $i$ there is exactly one $(D,D_{1})$-invariant $k$-edge for
each $i-1\leq k\leq p$, and there are no $(D,D_{1})$-invariant $\left(  \leq i-2\right)
$-edges. In all other rows there are no $(D,D_{1})$-invariant $\leq p$-edges.
\end{lemma}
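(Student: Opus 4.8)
The plan is to read off the equality conditions in the lower bound of Theorem~\ref{ThreeThrees}, applied with $D'=D_1$, the drawing obtained by deleting the rightmost vertex $n$. Recall that the proof of that theorem established a \emph{row-by-row} estimate: for every $0\le k\le p$ and every $1\le j\le k+1$, row $j$ of $M(D)$ contains at least $k+2-j$ points representing $(D,D_1)$-invariant $\le k$-edges. Writing $\lambda_j(k)$ for the actual number of $(D,D_1)$-invariant $\le k$-edges in row $j$, this says $\lambda_j(k)\ge k+2-j$ for $1\le j\le k+1$, while trivially $\lambda_j(k)\ge 0$ for all $j$ and $E_{\le k}(D,D_1)=\sum_{j\ge 1}\lambda_j(k)$. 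Since edges counted in distinct rows are distinct, this is a genuine term-by-term bound, which is what makes the equality analysis possible.

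The central step is to use the hypothesis $E_{\le k}(D,D_1)=\binom{k+2}{2}$ to force all of these inequalities to be tight at once. For each fixed $k$ with $0\le k\le p$ I would write
\[
\binom{k+2}{2}=E_{\le k}(D,D_1)=\sum_{j=1}^{k+1}\lambda_j(k)+\sum_{j>k+1}\lambda_j(k)\ge\sum_{j=1}^{k+1}(k+2-j)=\binom{k+2}{2}.
\]
As the two ends coincide, every intermediate inequality is an equality, giving the two facts $\lambda_j(k)=k+2-j$ for $1\le j\le k+1$ and $\lambda_j(k)=0$ for $j>k+1$, valid for each $0\le k\le p$. These two facts contain the whole lemma; it remains only to pass from the $\le k$-edge counts $\lambda_i(k)$ to exact $k$-edge counts by taking consecutive differences.

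To finish I would fix a row $i$ with $1\le i\le p+1$ and take consecutive differences of $\lambda_i$ in $k$, using the convention $\lambda_i(k)=0$ for $k<0$ (vacuous, as no edge is a $\le(-1)$-edge). The number of $(D,D_1)$-invariant exact $k$-edges in row $i$ is $\lambda_i(k)-\lambda_i(k-1)$. For $i\le k\le p$ both equalities $\lambda_i(k)=k+2-i$ and $\lambda_i(k-1)=k+1-i$ apply, giving difference $1$; for $k=i-1$ one has $\lambda_i(i-1)=1$ and $\lambda_i(i-2)=0$ (the latter by the vanishing statement with $k=i-2$ when $i\ge2$, since then $i>(i-2)+1$, or by the convention when $i=1$), again giving $1$; and for $k\le i-2$ the monotonicity of $\lambda_i$ together with $\lambda_i(i-2)=0$ forces $\lambda_i(k)=0$, so there are no invariant $(\le i-2)$-edges. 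Hence row $i$ has exactly one invariant $k$-edge for each $i-1\le k\le p$, while rows $i>p+1$ carry no invariant $\le p$-edges by the vanishing statement with $k=p$. I expect the only real care to be needed in the bookkeeping of the boundary cases $i=1$ and $k=i-1$, and in confirming that the row-by-row bound of Theorem~\ref{ThreeThrees} is additive across rows (which it is, the counted edges lying in distinct rows) so that it may legitimately be equated term by term; the rest is a direct extraction of equality conditions.
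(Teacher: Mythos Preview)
Your proposal is correct and follows essentially the same route as the paper: both extract the equality conditions from the row-by-row lower bound established inside the proof of Theorem~\ref{ThreeThrees} (the inequality $E_{\le k}(D,D_1)\ge\binom{k+2}{2}$ via at least $k+2-j$ invariant $\le k$-edges in each row $j\le k+1$), then take successive differences in $k$ to pass from $\le k$-edge counts to exact $k$-edge counts. Your handling of the boundary cases $k=i-1$ and $i=1$ is slightly more explicit than the paper's, but the argument is the same.
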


\begin{proof}
In what follows all invariant edges are $\left(  D,D_{1}\right)  $-invariant
edges. For $k=0$ the statement implies that there is a unique invariant
0-edge and it appears in row 1. Note that this edge corresponds to the first entry in the order
associated to $(1,n)$ in $M(D)$. Following the proof of Theorem
\ref{ThreeThrees}, $E_{\leq k}(D,D_{1})=\tbinom{k+2}{2}$ implies that for all
$1\leq i\leq k+1$ there are exactly $k+2-i$ invariant $\leq k$-edges in row $i
$ of $M\left(  D\right)  $, and for $k+2\leq i\leq n-1$ there are no invariant
$\leq k$-edges in row $i$ of $M(D)$. The second part implies that there are no
invariant $k$-edges in row $i$ for all $k+2\leq i\leq n-1$ and $0\leq k\leq p$. Similarly,
$E_{\leq k-1}(D,D_{1})=\tbinom{k+1}{2}$ implies that for all $1\leq i\leq k$
there are exactly $k+1-i$ invariant $\left(  \leq k-1\right)  $-edges in row
$i $ of $M\left(  D\right)  $. Therefore for all $1\leq i\leq k$ there is
exactly $\left(  k+2-i\right)  -\left(  k+1-i\right)  =1$ invariant $k$-edge,
and for $i=k+1$ there is exactly $k+2-\left(  k+1\right)  =1$ invariant $\leq
k$-edge and no invariant $\left(  \leq k-1\right)  $-edge in row $i$ of
$M\left(  D\right)  $. Therefore, there is exactly one invariant $k$-edge in
row $k+1$.
\end{proof}

\begin{lemma}
\label{orderoriginal}Let $p$ be an integer such that $0\leq p\leq\left\lfloor
n/2\right\rfloor -2$.

i) Suppose that for some $1\leq i\leq p+1$ row $i$ of $M(D)$ has
exactly one $(D,D_{1})$-invariant $k$-edge for each $i-1\leq k\leq p$ and no
$(D,D_{1})$-invariant $\leq(i-2)$-edges. If the entry $(i,n)$ in $M(D)$ is
blue (red), then the $m^{th}$ entry in row $i$ in the order associated to
$(i,n)$ has at least $\min\{p+2-m,i-1\}$ red (blue) entries above for every
$1\leq m\leq\min\{p+1,n-i-1\}$.

ii) Suppose that for some $i\geq p+2$ row $i$ of $M(D)$ does not have
$(D,D_{1})$-invariant $\leq p$-edges. If the entry $(i,n)$ in $M(D)$ is blue
(red), then the $m^{th}$ entry in row $i$ in the order associated to $(i,n)$
has at least $p+2-m$ red (blue) entries above for every $1\leq m\leq
\min\{p+1,n-i-1\}$.
\end{lemma}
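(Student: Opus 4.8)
The two parts (i) and (ii) assert, under the same hypothesis on the distribution of invariant edges in a given row $i$, a lower bound on the number of opposite-color entries lying above the $m^{\mathrm{th}}$ entry in the order associated to $(i,n)$. Since the analysis of how an invariant $\le k$-edge is certified lives entirely in the proof of Theorem~\ref{ThreeThrees}, my strategy is to reverse-engineer that proof: there, the \emph{first} $k+2-j$ entries of a distinguished color in row $j$ (counted in the order associated to $(j,n)$, equivalently the entries having few same-color points above or to the right) were shown to represent invariant $\le k$-edges. The hypothesis here is that these inequalities are \emph{tight} in row $i$ — exactly one invariant $k$-edge for each $i-1 \le k \le p$ and none of smaller index — so the plan is to read the proof of Theorem~\ref{ThreeThrees} as an \emph{equality} rather than an inequality and extract the precise position of each invariant edge, then translate ``position in the order associated to $(i,n)$'' plus ``invariant index $k$'' into a count of opposite-color entries above.

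First I would fix the color of the entry $(i,n)$, say blue (the red case being symmetric, obtained by swapping colors throughout), and recall from Lemma~\ref{lem:kedge2matrix} and the proof of Theorem~\ref{ThreeThrees} exactly which entries in row $i$ are $(D,D_1)$-invariant and with what $k$: a \emph{red} entry in row $i$ is an invariant $i'$-edge where $i'$ counts its same-color (red) points above-or-to-the-right, while a blue entry with many blue points above-or-to-the-right is invariant with complementary index. Using Lemma~\ref{OrderObservation} to identify which entries occupy the first $m$ slots of the order associated to $(i,n)$ — namely the opposite-color (red) entries, scanned from the right, followed by the same-color (blue) ones — I would determine, for each $m \le \min\{p+1,n-i-1\}$, the identity and color of the $m^{\mathrm{th}}$ entry. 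The hypothesis that the invariant $k$-edges in row $i$ occur exactly once for each $i-1 \le k \le p$ pins down how the red and blue same-color counts must be arranged, which is what forces the claimed count of $\min\{p+2-m,i-1\}$ (resp.\ $p+2-m$) opposite-color entries above that position.

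The key technical step is the bookkeeping that converts the invariant-edge index $k$ of the $m^{\mathrm{th}}$ entry into a bound on red entries \emph{above} it. The index $k$ counts same-color points above-or-to-the-right; the ``order associated'' counts (opposite-color from the right, then same-color from the left), so the two measure overlapping but distinct quantities, and the bridge is that the total number of entries to the right in row $i$ is fixed ($n-i-1$, split into red and blue according to the coloring). I would write the $m^{\mathrm{th}}$ entry's same-color-to-the-right count in terms of $m$ using Lemma~\ref{OrderObservation}, invoke tightness to eliminate the free parameter ($e$, the deficiency in the number of one color, as in the proof of Theorem~\ref{ThreeThrees}), and subtract to isolate the number of opposite-color entries above. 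The truncation at $i-1$ in part (i) reflects that there are only $i-1$ rows above row $i$, so no entry can have more than $i-1$ entries above it; this is automatically the cap in part (i) and is why part (ii), where $i \ge p+2$ so $i-1 \ge p+1 \ge p+2-m$, drops the minimum.

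The main obstacle I anticipate is handling the deficient-color case cleanly — when one color appears fewer than $p+1$ times in the relevant portion of row $i$, so that some of the guaranteed invariant edges come from the \emph{other} color (the ``leftmost $e$ red points'' device from the proof of Theorem~\ref{ThreeThrees}). In that regime the correspondence between slot $m$ in the order and its invariant index $k$ changes character at the color boundary, and I must verify that the bound $\min\{p+2-m,i-1\}$ (resp.\ $p+2-m$) survives the crossover. I would treat this by a careful case split on whether the $m^{\mathrm{th}}$ entry is opposite-color or same-color to $(i,n)$, checking monotonicity of the claimed bound in $m$ across the boundary, and confirming that the tightness hypothesis rules out any configuration that would violate the count. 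Once the deficient-color subcase is reconciled with the generic subcase, both (i) and (ii) follow by the same computation, with (ii) being the strictly simpler instance since the absence of low-index invariant edges removes the $i-1$ cap.
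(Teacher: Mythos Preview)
Your plan is on the right track and shares the paper's core mechanism: use the exact count of invariant $k$-edges in row $i$ to force the colour configuration, case-splitting on whether the $m^{\mathrm{th}}$ entry $(i,e_m)$ has the same or the opposite colour as $(i,n)$. Two points of comparison are worth noting.

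First, the paper organises the argument for part (i) into two ranges of $m$, and the proof technique differs between them. For $1\le m\le p+2-i$ it proceeds by \emph{induction on $m$}: assuming $(i,e_{m'})$ is the unique invariant $(i-2+m')$-edge for all $m'<m$, it shows (in either colour case) that $(i,e_m)$ is an invariant $\le(i-2+m)$-edge, hence by uniqueness \emph{is} the invariant $(i-2+m)$-edge, hence has all $i-1$ entries above it of the opposite colour. For $p+3-i\le m\le\min\{p+1,n-i-1\}$ the bound drops to $p+2-m$, and the proof is by \emph{contradiction}: if fewer than $p+2-m$ opposite-colour entries sat above $(i,e_m)$, then (again in either colour case) $(i,e_m)$ would be an invariant $\le p$-edge beyond the $p+2-i$ already located, contradicting the hypothesis. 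Your plan does not surface this split, and without it the bookkeeping you describe (``eliminate the free parameter $e$'') has no anchor: the inductive phase is what pins down which entries \emph{are} the invariant edges, and only then does the contradiction phase make sense.

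Second, what you flag as the main obstacle --- the ``deficient-colour case'' from the proof of Theorem~\ref{ThreeThrees} --- is not really a separate regime here. In the paper's proof, the case where $(i,e_m)$ has the \emph{same} colour as $(i,n)$ (your ``deficient'' scenario, since then all entries in row $i$ to the left of $(i,e_m)$ share that colour) is handled in parallel with the opposite-colour case inside each of the two ranges, with a one-line count in each. There is no crossover to reconcile; the two colour cases are symmetric computations once you know how many same-colour points lie to the right, which is immediate from the definition of the associated order (not from Lemma~\ref{OrderObservation}, which goes the other way). Part (ii) is then literally the second-range argument, as you correctly anticipate.
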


\begin{proof}
In what follows invariant edges refer to $(D,D_{1})$-invariant edges.
Denote by $(i,e_{m})$ the
$m^{th}$ entry in the order associated to $(i,n)$. Note that if $(i,e_{m})$
and $(i,n)$ have opposite colors and the number of points above plus the
number of points to the right of $(i,e_{m})$ with the same color as
$(i,e_{m})$ is at most $p$, then $(i,e_{m})$ is an invariant $\leq p$-edge.
Similarly, if $(i,e_{m})$ and $(i,n)$ have the same color and the number of
points above plus the number of points to the right of $(i,e_{m})$ with the
same color as $(i,e_{m})$ is more than $n-2-p$, then $(i,e_{m})$ is an
invariant $\leq p$-edge.

Suppose that the entry $(i,n)$ of $M(D)$ is blue (red).

(i) If $(  i,e_{1})  $ is red (blue), then it does not have red
entries to its right and it has at most $i-1$ red (blue) entries
above. Since $i-1\leq p$, then $(  i,e_{1})  $ is an invariant $(
\leq i-1)  $-edge. Because there are no invariant $( \leq i-2)
$-edges in row $i$, it follows that $( i,e_{1})  $ is the unique
invariant $( i-1)  $-edge in row $i$ and thus all $i-1$ entries
above it are red (blue). Similarly, if the $( i,e_{1}) $ is blue
(red), then all entries in row $i$ are blue (red) and $( i,e_{1}) =(
i,i+1) $. Hence $( i,e_{1})  $ has $n-i-1$ blue (red) entries to its
right and perhaps some other blue (red) entries above. Since
$n-i-1\geq n-( p+1)  -1\geq n-2-p$, then $( i,e_{1})  $ is an
invariant $(  \leq i-1) $-edge. Because there are no invariant $(
\leq i-2) $-edges in row $i$, it follows that $( i,e_{1})  $ is the
unique invariant $( i-1)  $-edge in row $i$ and thus all $i-1$
entries above it are red (blue).

For $2\leq m\leq p+2-i$ assume that the entry $(  i,e_{m^{\prime}%
})  $ is an invariant $(  i-2+m^{\prime})  $-edge for every $1\leq
m^{\prime}\leq m-1$. Note that $i-1\leq i-2+m^{\prime}\leq p-1$.

If $(  i,e_{m})  $ is red (blue), then $(  i,e_{m^{\prime}%
})  $ is red (blue) for every $1\leq m^{\prime}\leq m-1$. So $(
i,e_{m})  $ has exactly $m-1$ red (blue) entries to its right and at
most $i-1$ red (blue) entries above, that is, $( i,e_{m})  $ is an
invariant $\leq(  i-2+m)  $-edge. By hypothesis there is a unique
invariant $k$-edge for every $i-1\leq k\leq p$ and among the first
$m-1$ entries there is exactly one invariant $k$-edge for each
$i-1\leq k\leq i-2+( m-1) =i-3+m$. So $(  i,e_{m}) $ is the unique
invariant $( i-2+m) $-edge (note that $1\leq i-2+m\leq p$) and thus
all the entries above it are red (blue).

If $(  i,e_{m})  $ is blue (red), then there are exactly $n-i+m$
blue (red) entries to its right and perhaps some others above it.
Since $n-i+m\geq n-i-(  p+2-i)  =n-p+2$, then $( i,e_{m})  $ is an
invariant $\leq(  i-2+m) $-edge. As before $( i,e_{m}) $ must be an
invariant $( i-2+m) $-edge and thus it must have only red (blue)
entries above.

We have already determined the unique invariant $k$-edge for each
$1\leq k\leq p$. So there are no more invariant $\leq p$-edges in
row $i$. For $p+3-i\leq m\leq\min\{  p+1,n-i-1\}  $, we prove that
the entry $( i,e_{m})  $ has at least $p+2-m=\min\{ p+2-m,i-1\}  $
red (blue) entries above.

If $(  i,e_{m})  $ is red (blue), then it has $m-1$ red (blue)
entries to its right. If $(  i,e_{m})  $ had less than $p+2-m$ (note
that $p+2-m\leq i-1$) red (blue) entries above, then it would be an
invariant $\leq p$-edge (because $(  m-1) +( p+1-m) =p$) getting a
contradiction.

If $(  i,e_{m})  $ is blue (red), then it has $n-i-m$ blue (red)
entries to its right. If $(  i,e_{m})  $ had less than $p+2-m$ red
(blue) entries above, then it would have a total of at least
$n-i-m+( i-1)  -(  p+1-m)  =n-2-p$ blue (red) entries above or to
its right, and thus it would be an invariant $\leq p$-edge getting a
contradiction.

(ii) The proof is the same as for the case $p+3-i\leq m\leq\min\{
p+1,n-i-1\}  $ in (i) as we only used that the $m^{th}$ entry in that
range was not an invariant $\leq p$-edge.
\end{proof}

\begin{lemma}
\label{frominduction}If $D$ is crossing optimal, then for $0\leq
j\leq\lfloor n/2\rfloor -2$ we have
\[
E_{{\leq}{\leq}k}(D_{j})=3\tbinom{k+3}{3}\text{ for all }0\leq k\leq
\lfloor n/2\rfloor -2-j.
\]

\end{lemma}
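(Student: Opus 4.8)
The plan is to deduce the entire family of equalities from the single fact that crossing optimality forces every lower bound used in the proof of Theorem~\ref{TheResult} to be tight, and then to propagate this tightness from $D_{j}$ to $D_{j+1}$ by induction on $j$. First I would establish the base case $j=0$. Since $D$ is crossing optimal, $\crg(D)=Z(n)$, and the chain of inequalities in the proof of Theorem~\ref{TheResult} (which combines Proposition~\ref{IdentityForAtmostAtmost} with the bounds $E_{{\leq}{\leq}k}(D)\geq3\binom{k+3}{3}$ from Theorem~\ref{ThreeThrees}) must hold with equality throughout. Because in both the even and odd cases the net coefficient of each $E_{{\leq}{\leq}k}(D)$ in the expression for $\crg(D)$ given by Proposition~\ref{IdentityForAtmostAtmost} is strictly positive, every one of these bounds is individually forced to be tight; hence $E_{{\leq}{\leq}k}(D)=3\binom{k+3}{3}$ for all $0\leq k\leq\lfloor n/2\rfloor-2$, which is exactly the statement for $j=0$.

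The engine of the induction is the decomposition established inside the proof of Theorem~\ref{ThreeThrees}. Writing $D'$ for the drawing obtained by deleting the rightmost vertex, that proof shows
\[
E_{{\leq}{\leq}k}(D)=E_{{\leq}{\leq}k-1}(D')+2\binom{k+2}{2}+E_{\leq k}(D,D'),
\]
together with the two lower bounds $E_{{\leq}{\leq}k-1}(D')\geq3\binom{k+2}{3}$ and $E_{\leq k}(D,D')\geq\binom{k+2}{2}$. The crucial observation is that deleting the rightmost vertex is exactly the operation $D\mapsto D_{1}$, so $D'=D_{1}$ and, more generally, $(D_{j})'=D_{j+1}$. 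Applying the decomposition to $D_{j}$ and adding the two bounds, Pascal's identity $\binom{k+2}{3}+\binom{k+2}{2}=\binom{k+3}{3}$ gives
\[
E_{{\leq}{\leq}k}(D_{j})\geq3\binom{k+2}{3}+2\binom{k+2}{2}+\binom{k+2}{2}=3\binom{k+3}{3}.
\]
Thus whenever $E_{{\leq}{\leq}k}(D_{j})=3\binom{k+3}{3}$, all three inequalities collapse to equalities, and in particular $E_{{\leq}{\leq}k-1}(D_{j+1})=3\binom{k+2}{3}$.

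The induction then runs as follows. Assuming $E_{{\leq}{\leq}k}(D_{j})=3\binom{k+3}{3}$ for all $0\leq k\leq\lfloor n/2\rfloor-2-j$, I let $k$ range over $1\leq k\leq\lfloor n/2\rfloor-2-j$ and read off $E_{{\leq}{\leq}k'}(D_{j+1})=3\binom{k'+3}{3}$ for $k'=k-1$, that is, for all $0\leq k'\leq\lfloor n/2\rfloor-2-(j+1)$. This is precisely the statement for $j+1$, closing the induction. The step I expect to require the most care is the bookkeeping of the ranges of $k$: I must verify that for each $j$ the top value $k=\lfloor n/2\rfloor-2-j$ still lies in the regime where both the decomposition identity and the Theorem~\ref{ThreeThrees} bounds are valid for the $(n-j)$-vertex drawing $D_{j}$, and that $k-1$ lies in the corresponding regime for the $(n-j-1)$-vertex drawing $D_{j+1}$. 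A short estimate confirms that $\lfloor n/2\rfloor-2-j<(n-j)/2-1$ and $\lfloor n/2\rfloor-3-j<(n-j-1)/2-1$ hold for every $j\geq0$ (both reduce to $\lfloor n/2\rfloor<(n+j+3)/2$), so the ranges are always admissible and no boundary case is lost; everything else is the routine Pascal cancellation displayed above.
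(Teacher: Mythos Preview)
Your proposal is correct and follows essentially the same approach as the paper: establish the base case $j=0$ from tightness in Theorem~\ref{TheResult}, then propagate equality from $D_j$ to $D_{j+1}$ via the decomposition in the proof of Theorem~\ref{ThreeThrees}. You supply more detail than the paper (the positivity of coefficients in Proposition~\ref{IdentityForAtmostAtmost} and the range check for $k$), and aside from a harmless arithmetic slip in the range verification (the first inequality reduces to $\lfloor n/2\rfloor<(n+j+2)/2$, not $(n+j+3)/2$, though both are true), your argument is sound.
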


\begin{proof}
Since $D$ is crossing optimal, then equality must be achieved in the
proof of Theorem \ref{TheResult}, that is, $E_{{\leq}{\leq}k}( D)
=3\tbinom {k+3}{3}$ for all $0\leq k\leq\lfloor n/2\rfloor -2$. This
implies that equality must be achieved throughout the proof of
Theorem \ref{ThreeThrees}, in particular, $E_{{\leq}{\leq}k}( D_{1})
=3\tbinom{k+2}{3}$ for all $0\leq k\leq\lfloor n/2\rfloor -2,$ which
is equivalent to $E_{{\leq}{\leq}k}( D_{1})  =3\tbinom {k+3}{3}$ for
all $0\leq k\leq\lfloor n/2\rfloor -3$.

In general, for $0\leq j\leq\lfloor n/2\rfloor -2$, following the
proof of Theorem \ref{ThreeThrees}, $E_{{\leq}{\leq}k}( D_{j})
=3\tbinom{k+3}{3}$ for $1\leq k\leq\lfloor n/2\rfloor -2-j$ implies
that $E_{{\leq}{\leq}k-1}( D_{j+1}) =3\tbinom{k+2}{3}$ for $1\leq
k\leq\lfloor n/2\rfloor -2-j$, which is equivalent to $E_{{\leq
}{\leq}k}( D_{j+1}) =3\tbinom{k+3}{3}$ for $1\leq k\leq \lfloor
n/2\rfloor -2-j-1$.
\end{proof}

\begin{lemma}
\label{order}If $D$ is crossing optimal, then in $M(D)$ the $m^{th}$
entry in the order associated to $(  i,j)  $ has at least $\min\{
j-\lceil n/2\rceil -m,i-1\}  $ entries above with different color
than $(i,j)$ for all $1\leq m\leq\min\{ j-\lfloor n/2\rfloor
-1,j-i-1\}  .$
\end{lemma}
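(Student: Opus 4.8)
The plan is to prove Lemma~\ref{order} by combining the structural information about $(D,D_j)$-invariant edges (gathered in Lemmas~\ref{frominduction}, \ref{allprevious}, and \ref{orderoriginal}) with the translation transformation $f$ that relates $D$ to its equivalent drawings. The key observation is that the quantity $j-\lceil n/2\rceil$ in the statement of Lemma~\ref{order} is exactly the parameter that appears when we shift our attention from the vertex $n$ to the vertex $j$ via the strip diagram. Since $D$ is crossing optimal, every drawing equivalent to $D$ (in particular every translate $f^m(D)$) is also crossing optimal, so all the equality conditions from the proofs of Theorems~\ref{ThreeThrees} and~\ref{TheResult} hold simultaneously for $D$ and all of its translates.

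First I would fix the entry $(i,j)$ and recognize that the order associated to $(i,j)$ depends only on the edges $il$ with $i<l<j$, that is, only on the portion of row $i$ strictly to the left of column $j$. The strategy is to pass to the equivalent drawing obtained by cutting the strip diagram so that column $j$ becomes the last column; concretely, I would apply a suitable power $f^{n-j}$ (or equivalently work in the subdrawing $D_{n-j}$ obtained by deleting the rightmost $n-j$ points, so that $j$ becomes the rightmost vertex). In this relabelled drawing, the entry $(i,j)$ plays the role that $(i,n)$ played before, and the order associated to $(i,j)$ becomes the order associated to the last column. This reduces Lemma~\ref{order} to an application of Lemma~\ref{orderoriginal} with the vertex count $n$ replaced by $j$ and the parameter $p$ chosen as $p=\lfloor j/2\rfloor-1$ (equivalently $p+2 = \lceil (j+1)/2\rceil$-type expression), which is precisely why the threshold $j-\lceil n/2\rceil$ arises after we translate the bound $p+2-m$ from the subdrawing back to the original indexing.

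To make the reduction rigorous I would proceed as follows. By Lemma~\ref{frominduction}, since $D$ is crossing optimal we have $E_{{\leq}{\leq}k}(D_c)=3\tbinom{k+3}{3}$ for the appropriate ranges of $k$, and in particular the subdrawing $D_{n-j}$ (whose rightmost vertex is $j$) satisfies the equality hypothesis of Lemma~\ref{allprevious} with $D_{n-j}$ in place of $D$ and $D_{n-j+1}$ in place of $D_1$. Lemma~\ref{allprevious} then tells us exactly where the invariant $\leq p$-edges sit in each row, which supplies the two hypotheses (the ``exactly one $k$-edge per row'' case and the ``no $\leq p$-edges'' case) needed to invoke both parts~(i) and~(ii) of Lemma~\ref{orderoriginal}. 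Applying Lemma~\ref{orderoriginal} inside $D_{n-j}$ yields that the $m^{th}$ entry in the order associated to $(i,j)$ has at least $\min\{p+2-m,\,i-1\}$ opposite-colored entries above, where $p=\lfloor j/2\rfloor-1$. Finally I would verify the arithmetic $p+2-m = \lfloor j/2\rfloor+1-m$ matches $j-\lceil n/2\rceil-m$; here I would need to be careful that the relevant range $1\le m\le\min\{j-\lfloor n/2\rfloor-1,\,j-i-1\}$ is exactly the range in which Lemma~\ref{orderoriginal} gives the stated bound, and to reconcile the floor/ceiling bookkeeping between $\lfloor j/2\rfloor$ and $\lceil n/2\rceil$.

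The main obstacle I anticipate is the index translation itself: the statement of Lemma~\ref{order} is phrased in terms of the \emph{original} drawing $D$ on $n$ vertices with the threshold $j-\lceil n/2\rceil$, whereas Lemma~\ref{orderoriginal} is phrased for a drawing on its own full vertex set with threshold $p+2-m$. I must confirm that deleting the rightmost $n-j$ vertices (passing to $D_{n-j}$) does not disturb the order associated to $(i,j)$—which is clear since that order only sees columns to the left of $j$—and, more delicately, that the notion of ``$(D_{n-j},D_{n-j+1})$-invariant $k$-edge'' used inside the subdrawing correctly corresponds, via the count of points above and to the right in the truncated matrix, to the bound we want in the original indexing. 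Getting the $\lceil n/2\rceil$ versus $\lfloor j/2\rfloor$ constant exactly right (and handling the parity of $j$) is the one genuinely error-prone computation; everything else is an assembly of the preceding lemmas under the equivalence group generated by $f$.
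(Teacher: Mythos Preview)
Your overall plan---pass to the subdrawing $D_{n-j}$ on the first $j$ vertices, use Lemma~\ref{frominduction} to feed Lemma~\ref{allprevious}, and then invoke Lemma~\ref{orderoriginal}---is exactly the paper's argument. However, your choice of the parameter $p$ is wrong, and this is not mere floor/ceiling bookkeeping but a genuine gap.

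You set $p=\lfloor j/2\rfloor-1$, apparently because $D_{n-j}$ is a drawing of $K_j$. But $D_{n-j}$ is \emph{not} known to be crossing optimal as a drawing of $K_j$, so you cannot push $p$ all the way to $\lfloor j/2\rfloor-1$. What Lemma~\ref{frominduction} actually provides is $E_{\leq\leq k}(D_{n-j})=3\tbinom{k+3}{3}$ only for $0\le k\le \lfloor n/2\rfloor-2-(n-j)=j-2-\lceil n/2\rceil$; this is the range inherited from the optimality of $D$ on $n$ vertices, and it is in general strictly smaller than $\lfloor j/2\rfloor-1$. Hence the hypothesis of Lemma~\ref{allprevious} (that $E_{\le k}(D_{n-j},D_{n-j+1})=\tbinom{k+2}{2}$) is only available for $0\le k\le p$ with $p=j-2-\lceil n/2\rceil$. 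With this correct value the arithmetic is immediate, $p+2-m=j-\lceil n/2\rceil-m$, matching the statement exactly; there is nothing to ``reconcile'' between $\lfloor j/2\rfloor$ and $\lceil n/2\rceil$, because the former never enters. A secondary remark: applying a power of $f$ and passing to $D_{n-j}$ are not equivalent operations---$f^{m}(D)$ is still a crossing-optimal drawing of $K_n$, whereas $D_{n-j}$ is a drawing of $K_j$ obtained by deletion. The paper works with $D_{n-j}$; the $f$-translation route does not directly yield the claim, since ``entries above'' and the row index change under the shift.
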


\begin{proof}
Consider the entry $(  i,j)  $ of $M(  D)  $. Because $D$ is
crossing optimal, it follows from Lemma \ref{frominduction} that
\[
E_{{\leq}{\leq}k}(  D_{n-j})  =3\tbinom{k+3}{3}\text{ for all }0\leq
k\leq\lfloor n/2\rfloor -2-(  n-j) =j-2-\lceil n/2\rceil .
\]

Consider row $i$ of $D_{n-j}$. (Note that $D_{n-j}$ has $j-1$ rows.)
If $1\leq i\leq j-1-\lceil n/2\rceil $, then by Lemma
\ref{allprevious} for $p=j-2-\lceil n/2\rceil $, the 2-page matrix
$M( D_{n-j})  $ satisfies that in row $i$ there is exactly one $(
D_{n-j},D_{n-j+1})  $-invariant $k$-edge for each $i-1\leq k\leq
j-2-\lceil n/2\rceil $ and there are no $(  D_{n-j}%
,D_{n-j+1})  $-invariant $(  \leq j-2-\lceil n/2\rceil ) $-edges.
Then by Lemma \ref{orderoriginal}(i) if the entry $( i,j)  $ in $M(
D)  $ (actually in $M( D_{n-j}) $ but we look at it as a submatrix
of $M( D)  $) is blue (red), then the $m^{th}$ entry in the order
associated to $( i,j)  $ has at least $\min\{ j-\lceil n/2\rceil
-m,i-1\} $ red (blue) entries above.

If $j-\lceil n/2\rceil \leq i\leq j-1$, then by Lemma
\ref{allprevious} for $p=j-2-\lceil n/2\rceil $, the 2-page matrix
$M(  D_{n-j})  $ satisfies that in row $i$ there are no $(
D_{n-j},D_{n-j+1})  $-invariant $( \leq j-2-\lceil n/2\rceil )
$-edges. Then by Lemma \ref{orderoriginal}(ii) if the entry $( i,j)
$ in $M( D)  $ is blue (red), then the $m^{th}$ entry in the order
associated to $(  i,j)  $ has at least $j-\lceil n/2\rceil
-m=\min\{j-\lceil n/2\rceil -m,i-1\}$ red (blue) entries above.
\end{proof}

\begin{corollary}
\label{OneAbove}If $D$ is crossing optimal, then for $2\leq
i\leq\lceil n/2\rceil $ and $\lceil n/2\rceil +2\leq j\leq n$, each
of the first $j-\lceil n/2\rceil -1$ entries in the order associated
to $(i,j)$ has at least one entry above with different color than
$(i,j)$.
\end{corollary}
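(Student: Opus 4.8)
The plan is to read this off directly from Lemma~\ref{order} by specializing the range of the index $m$. Lemma~\ref{order} asserts that, for a crossing optimal $D$, the $m^{th}$ entry in the order associated to $(i,j)$ has at least $\min\{j-\lceil n/2\rceil-m,\,i-1\}$ entries of different color above it, and this holds for every $m$ with $1\leq m\leq\min\{j-\lfloor n/2\rfloor-1,\,j-i-1\}$. To obtain the corollary I would restrict attention to the indices $1\leq m\leq j-\lceil n/2\rceil-1$ and verify two things: that these indices lie inside the range where Lemma~\ref{order} is valid, and that over this range the quantity $\min\{j-\lceil n/2\rceil-m,\,i-1\}$ is at least $1$.

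For the lower bound, note that $i-1\geq 1$ because $i\geq 2$, while $j-\lceil n/2\rceil-m\geq 1$ holds precisely when $m\leq j-\lceil n/2\rceil-1$; hence throughout the chosen window the minimum is at least $1$, which is exactly the assertion. The hypothesis $j\geq\lceil n/2\rceil+2$ guarantees that $j-\lceil n/2\rceil-1\geq 1$, so the window $1\leq m\leq j-\lceil n/2\rceil-1$ is nonempty and there is genuinely something to prove.

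For the range containment I would check $j-\lceil n/2\rceil-1\leq\min\{j-\lfloor n/2\rfloor-1,\,j-i-1\}$. The first bound $j-\lceil n/2\rceil-1\leq j-\lfloor n/2\rfloor-1$ is immediate since $\lceil n/2\rceil\geq\lfloor n/2\rfloor$. The second bound $j-\lceil n/2\rceil-1\leq j-i-1$ is equivalent to $i\leq\lceil n/2\rceil$, which is part of the hypothesis. Therefore every $m$ with $1\leq m\leq j-\lceil n/2\rceil-1$ sits in the admissible range of Lemma~\ref{order}, and the corollary follows.

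There is no real obstacle here: the whole content is careful bookkeeping with the floor and ceiling functions to confirm that the desired window of indices lies inside the window supplied by Lemma~\ref{order} and that the min-bound degrades to at least one over that window. The only point requiring attention is matching the $\lceil n/2\rceil$ that governs the number of initial entries in the claim against the $\lfloor n/2\rfloor$ appearing in the upper index bound of Lemma~\ref{order}, and noticing that the hypothesis $i\leq\lceil n/2\rceil$ is exactly what makes the $j-i-1$ constraint non-binding.
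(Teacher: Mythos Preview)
Your proof is correct and follows essentially the same route as the paper: both verify that the restricted range $1\le m\le j-\lceil n/2\rceil-1$ lies inside the admissible range of Lemma~\ref{order} (using $\lceil n/2\rceil\ge\lfloor n/2\rfloor$ and $i\le\lceil n/2\rceil$) and that both terms in the minimum are at least $1$ over that range (using $i\ge 2$ and $m\le j-\lceil n/2\rceil-1$). Your write-up is in fact a bit more explicit than the paper's, which compresses the argument into two lines.
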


\begin{proof}
Let $1\leq m\leq j-\lceil n/2\rceil -1$. Since $\lfloor n/2\rfloor $
and $i$ are at most $\lceil n/2\rceil $, then $m\leq\min\{ j-\lfloor
n/2\rfloor -1,j-i-1\}  $. Also $m\leq j-\lceil n/2\rceil -1$ and
$i\geq2$ imply that $\max\{j-\lceil n/2\rceil -m,i-1\}\geq1$. Thus
by Lemma \ref{order}, the $m^{th}$ entry in row $i$ in the order
associated to $(  i,j) $ has at least one entry above with different
color than $(i,j)$.
\end{proof}

\begin{corollary}
\label{AllAbove}If $D$ is crossing optimal, then for $n\geq3$,
$2\leq i\leq\lfloor n/2\rfloor -1,$ and $\lceil n/2\rceil +i\leq
j\leq n$, all entries above the first $j-i+1-\lceil n/2\rceil $
entries in the order associated to $(i,j)$ have different color than
$(i,j)$.
\end{corollary}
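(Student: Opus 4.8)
The plan is to deduce this statement directly from Lemma~\ref{order}, exploiting the fact that the lower bound furnished there becomes \emph{tight} precisely on the range of indices appearing in the corollary. The key structural observation is that the $m$-th entry $(i,e_m)$ in the order associated to $(i,j)$ lies in row $i$, so there are exactly $i-1$ entries directly above it, namely those in rows $1,\dots,i-1$ of its column. Consequently, if I can show that \emph{at least} $i-1$ of these entries have color different from that of $(i,j)$, then necessarily \emph{all} $i-1$ of them do, which is exactly the claim.

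First I would unwind the minimum in Lemma~\ref{order}. For $1\le m\le j-i+1-\lceil n/2\rceil$ the inequality $j-\lceil n/2\rceil-m\ge i-1$ holds, so $\min\{\,j-\lceil n/2\rceil-m,\ i-1\,\}=i-1$, and Lemma~\ref{order} then guarantees that the $m$-th entry has at least $i-1$ entries above it whose color differs from $(i,j)$. Combined with the observation that there are exactly $i-1$ such entries, this forces all of them to differ in color, giving the corollary for every $m$ in the stated range.

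The only point requiring genuine care (and hence the main, if modest, obstacle) is verifying that the entire range $1\le m\le j-i+1-\lceil n/2\rceil$ lies inside the validity range $1\le m\le\min\{\,j-\lfloor n/2\rfloor-1,\ j-i-1\,\}$ of Lemma~\ref{order}. I would check the two required inequalities separately: the inequality $j-i+1-\lceil n/2\rceil\le j-\lfloor n/2\rfloor-1$ reduces to $i\ge 2+(\lfloor n/2\rfloor-\lceil n/2\rceil)$, which holds because $i\ge 2$ and $\lfloor n/2\rfloor-\lceil n/2\rceil\in\{-1,0\}$; and the inequality $j-i+1-\lceil n/2\rceil\le j-i-1$ reduces to $\lceil n/2\rceil\ge 2$, i.e.\ $n\ge 3$, which is assumed. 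I would also record that the upper endpoint $j-i+1-\lceil n/2\rceil$ is at least $1$ by the hypothesis $j\ge\lceil n/2\rceil+i$, so the range is nonempty and the conclusion is non-vacuous. Assembling these observations completes the argument, with no further computation needed beyond the elementary inequality checks above.
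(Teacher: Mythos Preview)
Your proposal is correct and follows essentially the same approach as the paper's proof: both apply Lemma~\ref{order} directly, verify that the range $1\le m\le j-i+1-\lceil n/2\rceil$ lies within the validity range of that lemma, observe that the minimum in its conclusion equals $i-1$ on this range, and conclude by noting that an entry in row $i$ has exactly $i-1$ entries above it. Your inequality checks are in fact spelled out somewhat more carefully than in the paper.
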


\begin{proof}
Let $1\leq m\leq j-i+1-\lceil n/2\rceil $. Since $i\geq2$ and
$n\geq3$, then $m\leq\min\{j-\lfloor n/2\rfloor -1,j-i-1\}$. Also
$m\leq j-i+1-\lceil n/2\rceil $ implies that $\max\{j-\lceil
n/2\rceil -m,i-1\}\geq i-1$. Thus by Lemma \ref{order}, the $m^{th}$
entry in row $i$ in the order associated to $(  i,j)  $ has at least
$i-1$ entries above, (i.e., all entries above it) with different
color than $(i,j)$ in $M( D) $.
\end{proof}

\begin{lemma}
\label{firstrowscolumns}Suppose that $D$ is crossing optimal and
$0\leq k\leq \lfloor n/2\rfloor -2$. Then all $\leq k$-edges of $D$
belong to the union of the first $k+1$ rows and the last $k+1$
columns of $M( D)  $.
\end{lemma}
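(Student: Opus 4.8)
The plan is to argue by induction on $n$, peeling off the last vertex and feeding the already-established structure of invariant edges into the step. Write $D_1$ for the drawing of $K_{n-1}$ obtained by deleting vertex $n$. I would first record two consequences of optimality. From Lemma~\ref{frominduction} (with $j=1$) the drawing $D_1$ is again crossing optimal, so the induction hypothesis will be available for it. Moreover, combining $E_{\leq\leq k}(D)=3\tbinom{k+3}{3}$ with $E_{\leq\leq k-1}(D_1)=3\tbinom{k+2}{3}$ and the identity $E_{\leq\leq k}(D)=E_{\leq\leq k-1}(D_1)+2\tbinom{k+2}{2}+E_{\leq k}(D,D_1)$ from the proof of Theorem~\ref{ThreeThrees} yields $E_{\leq k}(D,D_1)=\tbinom{k+2}{2}$ for every $0\leq k\leq\lfloor n/2\rfloor-2$. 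This is exactly the hypothesis needed to invoke Lemma~\ref{allprevious} with $p=k$, which tells me that every $(D,D_1)$-invariant $\leq k$-edge lies in the first $k+1$ rows of $M(D)$.

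With these tools in hand I would partition the $\leq k$-edges of $D$ according to their entry $(i,j)$ in $M(D)$ into three classes: (A) those that are $(D,D_1)$-invariant; (B) those with $j<n$ that are not invariant; and (C) those with $j=n$, i.e.\ incident to vertex $n$. Class (C) sits in column $n$, hence in the last $k+1$ columns, and class (A) sits in rows $1,\dots,k+1$ by the application of Lemma~\ref{allprevious} above. The substance is in class (B): such an edge $ij$ is present in $D_1$, and since removing $n$ turns a $k$-edge of $D_1$ into a $k$- or $(k+1)$-edge of $D$, a non-invariant $\leq k$-edge of $D$ of type $m\leq k$ is a $(m-1)$-edge of $D_1$, hence a $\leq(k-1)$-edge of $D_1$. (Here $m\geq1$, since a $0$-edge not meeting $n$ is automatically invariant, so class (B) is empty when $k=0$.) Applying the induction hypothesis to the optimal drawing $D_1$ with parameter $k-1$ places this edge in the first $k$ rows or the last $k$ columns of $M(D_1)$.

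It then remains to translate these locations back into $M(D)$. The first $k$ rows of $M(D_1)$ are rows $1,\dots,k$ of $M(D)$, inside the first $k+1$ rows. Because $M(D_1)$ is $M(D)$ with column $n$ deleted, and its columns are labelled $2,\dots,n-1$, its last $k$ columns are columns $n-k,\dots,n-1$ of $M(D)$, which lie inside the last $k+1$ columns $n-k,\dots,n$. Hence every edge of class (B) also lands in the target union, and combining the three classes shows that all $\leq k$-edges of $D$ belong to the first $k+1$ rows or the last $k+1$ columns of $M(D)$, closing the induction. Note that this route bypasses the inclusion–exclusion count via Lemma~\ref{manyofeach}, whose overlap in the top-right corner would otherwise have to be controlled.

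I anticipate that the main obstacle is bookkeeping rather than a conceptual gap: I must verify that the index ranges close up under the two shifts occurring when passing from $D$ to $D_1$ — the drop of the edge-type parameter from $k$ to $k-1$ in class (B), and the relabelling of columns that turns the last $k$ columns of $M(D_1)$ into columns $n-k,\dots,n-1$ of $M(D)$ — and that the parameter $k-1$ still lies in the admissible range $0\leq k-1\leq\lfloor(n-1)/2\rfloor-2$ for the induction hypothesis on $D_1$, which holds because $\lfloor n/2\rfloor-\lfloor(n-1)/2\rfloor\leq1$. I would also treat the degenerate cases ($k=0$ and the base of the induction) separately, and confirm at the outset that the hypotheses of Lemmas~\ref{frominduction} and~\ref{allprevious} are genuinely available for the optimal drawing $D$.
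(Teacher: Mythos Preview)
Your overall strategy—partition the $\leq k$-edges into invariant ones, non-invariant ones not in column $n$, and those in column $n$; handle the first class via Lemma~\ref{allprevious}, the last trivially, and the middle class by descending to $D_1$—is exactly the mechanism behind the paper's proof. There is, however, a real gap in the execution.

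You assert that Lemma~\ref{frominduction} (with $j=1$) shows $D_1$ is crossing optimal. It does not: that lemma only guarantees $E_{\leq\leq k}(D_1)=3\tbinom{k+3}{3}$ for $0\leq k\leq\lfloor n/2\rfloor-3$, which for odd $n$ stops one short of the range $0\leq k\leq\lfloor(n-1)/2\rfloor-2$ needed in Proposition~\ref{IdentityForAtmostAtmost} to pin down $\crg(D_1)$. And in fact $D_1$ need \emph{not} be crossing optimal when $n$ is odd: summing $\crg(D-v)$ over all vertices $v$ gives $(n-4)Z(n)$, whereas $nZ(n-1)$ is strictly smaller (e.g.\ $27$ versus $21$ at $n=7$), so some vertex deletion exceeds $Z(n-1)$; since the cyclic shifts $f^i(D)$ are all crossing optimal and deleting vertex $n$ from $f^i(D)$ realises every one of these deletions, your hypothesis fails for at least one crossing optimal drawing. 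The induction hypothesis, as you stated it, is therefore unavailable.

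The repair is straightforward and keeps your argument intact: do not induct on the statement of the lemma itself, but on the stronger assertion ``if $E_{\leq\leq k'}(D)=3\tbinom{k'+3}{3}$ for every $0\leq k'\leq k$, then every $\leq k$-edge lies in the first $k+1$ rows or the last $k+1$ columns.'' Lemma~\ref{frominduction} supplies exactly this hypothesis for $D_1$ with parameter $k-1$, and your treatment of classes (A), (B), (C) then goes through unchanged. This is in effect what the paper does: rather than induct on $n$, it follows a single offending entry $(i,j)$ down the chain $D,D_1,\ldots,D_k$, using Lemma~\ref{frominduction} at each step (never optimality of $D_l$) to force non-invariance—since $i\geq k+2$ keeps $(i,j)$ outside the rows where invariant edges must live—so the edge type drops by one each time, until a $0$-edge of $D_k$ outside row~$1$ and column~$n-k$ contradicts $E_{\leq\leq 0}(D_k)=3$.
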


\begin{proof}
Suppose by contradiction that the entry $(i,j)$ of $M(D)$ represents
a $k$-edge and is not in the first $k+1$ rows ($i\geq k+2$) or in
the last $k+1$ columns ($j\leq n-k-1$). Since $D$ is crossing
optimal, equality must be achieved in
Inequality~(\ref{lastbinomial}) and thus we have that all $(
D,D_{1}) $-invariant $\leq k$-edges belong to the first $k+1$
columns. So $(i,j)$ is not $( D,D_{1}) $-invariant, that is, $ij$ is
a $( k-1) $-edge in $D_{1}$. Equality in Theorem \ref{ThreeThrees}
implies that $E_{\leq\leq k-1}( D_{1}) =3\tbinom{k+2}{2}$ and as
before all $( D_{1},D_{2}) $-invariant $( \leq k-1)  $-edges belong
to the last $k$ columns of $M( D_{1}) $, that is, columns
$n-k,n-k+1,\ldots,n-1$ of $M( D)  $. So $ij$ is not a $(
D_{1},D_{2}) $-invariant edge, that is, $ij$ represents a $( k-2)
$-edge in $D_{2}$. In general, assuming that $ij$ is a $( k-l)
$-edge in $D_{l}$ and since $E_{\leq\leq k-l}( D_{l})
=3\tbinom{k-l+3}{2}$, then all $( D_{l},D_{l+1}) $-invariant $( \leq
k-l) $-edges belong to the last $k+1-l$ columns of $M( D_{l}) $
(i.e., columns $n-k,n-k+1,\ldots,n-l$ of $M(D)$). So $(i,j)$ is not
a $( D_{l},D_{l+1}) $-invariant edge, that is, $(i,j)$ represents a
$( k-l-1)  $-edge in $D_{l+1}$. When $l=k-1$, $(i,j)$ is a $0$-edge
in $M(  D_{k})  $ that is not in the last column of $M( D_{k}) $
(column $n-k$ of $M( D)  $). Since there are at least three
$0$-edges in the first column and row of $M( D_{k}) $ and $i\geq2$,
then $E_{\leq\leq0}( D_{k}) \geq4$, but $E_{\leq\leq0}( D_{k}) $
must be $3$, getting a contradiction.
\end{proof}

We extend the standard terminology from the geometrical setting, and
call a $(\lfloor n/2\rfloor -1)$-edge a \mbox{\em halving edge}.

\begin{lemma}
\label{halving}If $D$ is crossing optimal, then the entries
$(\lfloor n/2\rfloor ,\lceil n/2\rceil +1),(\lfloor n/2\rfloor
,\lfloor n/2\rfloor +1),$ and $(\lceil n/2\rceil ,\lceil n/2\rceil
+1)$ of $M(D)$ are halving edges.
\end{lemma}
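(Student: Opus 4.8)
The plan is to derive the statement directly from Lemma~\ref{firstrowscolumns} by contraposition, with no further geometric work. Recall that a halving edge is a $(\lfloor n/2\rfloor-1)$-edge, and that this is the largest possible value: every edge of $D$ is a $k$-edge for a unique $k$ with $0\le k\le\lfloor n/2\rfloor-1$. Hence, to show that a given entry of $M(D)$ represents a halving edge, it suffices to show that it is \emph{not} a ${\le}(\lfloor n/2\rfloor-2)$-edge, which by Lemma~\ref{firstrowscolumns} (applied with $k=\lfloor n/2\rfloor-2$, legitimate as soon as $\lfloor n/2\rfloor\ge2$, i.e.\ $n\ge4$) follows once I check that the entry lies neither among the first $\lfloor n/2\rfloor-1$ rows nor among the last $\lfloor n/2\rfloor-1$ columns of $M(D)$. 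Note that for $n$ even the three listed entries all coincide with $(n/2,n/2+1)$, so in that case only one entry needs to be treated.

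Next I would do the bookkeeping with the floor and ceiling functions. Writing $k=\lfloor n/2\rfloor-2$, the first $k+1=\lfloor n/2\rfloor-1$ rows are the rows $1,\dots,\lfloor n/2\rfloor-1$, and the last $k+1=\lfloor n/2\rfloor-1$ columns are the columns $n-\lfloor n/2\rfloor+2,\dots,n$, that is, columns $\lceil n/2\rceil+2,\dots,n$. Thus an entry $(i,j)$ escapes both regions precisely when $i\ge\lfloor n/2\rfloor$ and $j\le\lceil n/2\rceil+1$. Each of the three candidate entries has row index $\lfloor n/2\rfloor$ or $\lceil n/2\rceil$, both of which are $\ge\lfloor n/2\rfloor$; and each has column index $\lfloor n/2\rfloor+1$ or $\lceil n/2\rceil+1$, both of which are $\le\lceil n/2\rceil+1$. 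Hence all three entries satisfy $i\ge\lfloor n/2\rfloor$ and $j\le\lceil n/2\rceil+1$, so none of them lies in the union of the first $\lfloor n/2\rfloor-1$ rows and the last $\lfloor n/2\rfloor-1$ columns.

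Applying Lemma~\ref{firstrowscolumns} with $k=\lfloor n/2\rfloor-2$ then shows that none of the three entries represents a ${\le}(\lfloor n/2\rfloor-2)$-edge. Since the only remaining possibility in the range $0\le k\le\lfloor n/2\rfloor-1$ is $k=\lfloor n/2\rfloor-1$, each of the three entries represents a $(\lfloor n/2\rfloor-1)$-edge, i.e.\ a halving edge, as claimed. For the degenerate case $n=3$ (where Lemma~\ref{firstrowscolumns} is vacuous, since $\lfloor n/2\rfloor-2<0$) the three entries are simply the three edges of $K_3$, each of which is trivially a $0$-edge and hence a halving edge.

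I do not anticipate a genuine obstacle: the argument is a one-line consequence of Lemma~\ref{firstrowscolumns} once the positions are pinned down. The only place demanding care is the off-by-one bookkeeping in the row and column cutoffs --- in particular verifying that the entries sit exactly on the boundary ($i=\lfloor n/2\rfloor$ and $j=\lceil n/2\rceil+1$ for the extreme ones) rather than one step inside the forbidden region --- together with keeping the parity distinction $\lfloor n/2\rfloor$ versus $\lceil n/2\rceil$ straight and disposing of the small values of $n$ separately.
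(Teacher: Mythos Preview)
Your proposal is correct and follows essentially the same approach as the paper: both apply Lemma~\ref{firstrowscolumns} with $k=\lfloor n/2\rfloor-2$ and observe that the three entries lie outside the union of the first $\lfloor n/2\rfloor-1$ rows and last $\lfloor n/2\rfloor-1$ columns, forcing them to be halving edges. Your version simply spells out the row/column bookkeeping and the small-$n$ boundary case more explicitly than the paper's two-sentence proof.
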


\begin{proof}
This follows from Lemma \ref{firstrowscolumns} as all $\leq( \lfloor
n/2\rfloor -2)  $-edges of $D$ belong to the union of the first
$\lfloor n/2\rfloor -1$ rows (top to bottom) and the last $\lfloor
n/2\rfloor -1$ columns (left to right) of $D$. The entries $(\lfloor
n/2\rfloor ,\lceil n/2\rceil +1),(\lfloor n/2\rfloor ,\lfloor
n/2\rfloor +1),$ and $(\lceil n/2\rceil ,\lceil n/2\rceil +1)$ are
not in the first $\lfloor n/2\rfloor -1\ $rows or in the last
$\lfloor n/2\rfloor -1\ $columns.
\end{proof}

Lemma \ref{halving} guarantees that the entry $(i,i+1)$ in general,
and the entry $(i,i+2)$ when $n$ is odd, are halving lines in some
drawing equivalent to $D$. The next result states what this means in
$D$. We state it only for $1\leq i\leq\lfloor n/2\rfloor $ (but it
can be stated for $\lceil n/2\rceil \leq i\leq n$ as well) as it is
the only case we explicitly use later in the paper.

\begin{lemma}
\label{halvingMobius}Let $1\leq i\leq\lfloor n/2\rfloor $. If $D$ is
crossing optimal, then $M(  D)  $ satisfies that the number of blue
entries in%
\begin{align}
\{(r,i+1) &  :1\leq r\leq i-1\}\cup\{(i,c):i+2\leq c\leq i+\lceil
n/2\rceil \}\label{halvingMup}\\
\cup\{(i+1,c) &  :i+\lceil n/2\rceil +1\leq c\leq n\}\nonumber
\end{align}
is either $\lfloor n/2\rfloor -1$ or $\lceil n/2\rceil
-1$. If $n$ is odd, then the number of entries in%
\begin{align}
\{(r,i+2) &  :1\leq r\leq i-1\}\cup\{(i,c):i+3\leq c\leq i+\lceil
n/2\rceil \}\label{halvingUpseconddiag}\\
\cup\{(i+2,c) &  :i+\lceil n/2\rceil +1\leq c\leq n\}\nonumber
\end{align}
with the same color as the entry $(i,i+2)$ is either $\lfloor
n/2\rfloor -1$ or $\lfloor n/2\rfloor $.
\end{lemma}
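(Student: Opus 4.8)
The plan is to derive both statements from Lemma~\ref{halving} by \emph{transporting} the guaranteed central halving edges onto the superdiagonal position $(i,i+1)$ (and, for odd $n$, onto the second superdiagonal position $(i,i+2)$) via the equivalence transformation $f$, and then reading off the resulting $k$-edge count through Lemma~\ref{lem:kedge2matrix} in the shifted matrix. First I would set $m=i-\lfloor n/2\rfloor$, which is $\le 0$ since $i\le\lfloor n/2\rfloor$. Recall that $f$ sends the vertex at position $p$ in $f(D)$ to position $p+1$ of $D$ (cyclically), so the vertex at position $p$ in $f^{m}(D)$ sits at position $p+m \pmod n$ in $D$. Hence the central superdiagonal entry $(\lfloor n/2\rfloor,\lfloor n/2\rfloor+1)$ of $M(f^{m}(D))$ is exactly the edge $i(i+1)$ of $D$, and likewise $(\lfloor n/2\rfloor,\lfloor n/2\rfloor+2)$ corresponds to $i(i+2)$. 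Since $f^{m}(D)$ is equivalent to $D$ it is again crossing optimal and is a $2$-page book drawing, so Lemma~\ref{halving} applies and these central entries are halving edges of $f^{m}(D)$; moreover $f$ preserves the page (colour) of every edge, so the colour of the central entry equals that of $i(i+1)$ (blue, by our convention) resp. $i(i+2)$ in $D$.

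Next I would apply Lemma~\ref{lem:kedge2matrix} \emph{inside} $M(f^{m}(D))$ to the central entry. Because it is a halving edge, the number of same-colour entries above it in its column together with those to its right in its row equals $\lfloor n/2\rfloor-1$ or $\lceil n/2\rceil-1$. The crux is then to identify these entries back inside $M(D)$ through $p\mapsto p+m \pmod n$. The right-of-row entries in row $\lfloor n/2\rfloor$ (columns $\lfloor n/2\rfloor+2,\dots,n$) map to $(i,c)$ with $i+2\le c\le i+\lceil n/2\rceil$, with no wraparound precisely because $i\le\lfloor n/2\rfloor$. The column-above entries (rows $1,\dots,\lfloor n/2\rfloor-1$ of column $\lfloor n/2\rfloor+1$) split according to the sign of $r+m$: those with $r\ge\lfloor n/2\rfloor-i+1$ stay positive and give $(r',i+1)$ with $1\le r'\le i-1$, while those with $r\le\lfloor n/2\rfloor-i$ wrap modulo $n$ and give $(i+1,c)$ with $i+\lceil n/2\rceil+1\le c\le n$. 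The union of these three families is exactly the set~(\ref{halvingMup}), so its blue count equals $\lfloor n/2\rfloor-1$ or $\lceil n/2\rceil-1$, which is the first claim.

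For odd $n$ I would run the identical argument starting instead from the second-superdiagonal halving edge $(\lfloor n/2\rfloor,\lceil n/2\rceil+1)=(\lfloor n/2\rfloor,\lfloor n/2\rfloor+2)$ of Lemma~\ref{halving}, now tracking the entry $(i,i+2)$. The right-of-row entries (columns $\lfloor n/2\rfloor+3,\dots,n$) become $(i,c)$ with $i+3\le c\le i+\lceil n/2\rceil$, and the column-above entries again split into the unwrapped part $(r,i+2)$ with $1\le r\le i-1$ and the wrapped part $(i+2,c)$ with $i+\lceil n/2\rceil+1\le c\le n$, yielding exactly~(\ref{halvingUpseconddiag}). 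Here one counts entries sharing the (possibly red) colour of $(i,i+2)$, and since $\lceil n/2\rceil-1=\lfloor n/2\rfloor$ for odd $n$ the halving condition reads $\lfloor n/2\rfloor-1$ or $\lfloor n/2\rfloor$, as stated. Note the vertex at position $\lfloor n/2\rfloor+1$ lies strictly between the endpoints, hence is never counted by Lemma~\ref{lem:kedge2matrix}, which is why this set has $n-3$ entries rather than $n-2$.

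The step I expect to be the main obstacle is the wraparound bookkeeping in the strip diagram: I must verify carefully that exactly the top $\lfloor n/2\rfloor-i$ rows of the central column wrap to columns $i+\lceil n/2\rceil+1,\dots,n$ and land in row $i+1$ (resp. $i+2$) of $M(D)$, so that the three families partition the relevant $n-2$ (resp. $n-3$) other vertices with neither gaps nor overlaps. Everything else is a direct invocation of Lemma~\ref{halving}, the colour- and crossing-invariance of $f$, and Lemma~\ref{lem:kedge2matrix}; the boundary cases $i=1$ (empty first family) and $i=\lfloor n/2\rfloor$ (empty third family) are handled automatically by the same index ranges.
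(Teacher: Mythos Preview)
Your proposal is correct and follows essentially the same approach as the paper: shift by $f^{i-\lfloor n/2\rfloor}$ so that the edge in question lands on the central (second) superdiagonal entry of the transformed matrix, invoke Lemma~\ref{halving} there, and read off the halving condition via Lemma~\ref{lem:kedge2matrix}. The paper's proof is terse and offloads the index bookkeeping to Figure~\ref{HalvingEquivalent} (it phrases the correspondence as ``the entries above plus the entries below'' the central entry in the strip diagram), whereas you spell out explicitly how the column-above entries split into an unwrapped part landing in column $i{+}1$ and a wrapped part landing in row $i{+}1$ (resp.~$i{+}2$); this is exactly the content of the figure and is the step you correctly flagged as the one requiring care.
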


\begin{proof}
In the strip diagram of $D$, the entry $(  i,i+1)  $ of $M(D) $
corresponds to the entry $(  \lfloor n/2\rfloor ,\lfloor n/2\rfloor
+1)  $ of $M( f^{i-\lfloor n/2\rfloor }(  D) )  $, see Figure
\ref{HalvingEquivalent} (left). Applying Lemma \ref{halving} to
$M(f^{i-\lfloor n/2\rfloor }( D) ) $ and noticing that the entries
of $M(D)$ in (\ref{halvingMup}) correspond to the entries above plus
the entries below the entry $(\lfloor n/2\rfloor ,\lfloor n/2\rfloor
+1)$ of $M(f^{i-\lfloor n/2\rfloor }(D))$ gives the result. The
proof of the second part is similar, see Figure
\ref{HalvingEquivalent} (right).
\end{proof}

\begin{figure}
[h]
\begin{center}
\includegraphics[
height=1.9424in,
width=5.6688in]
{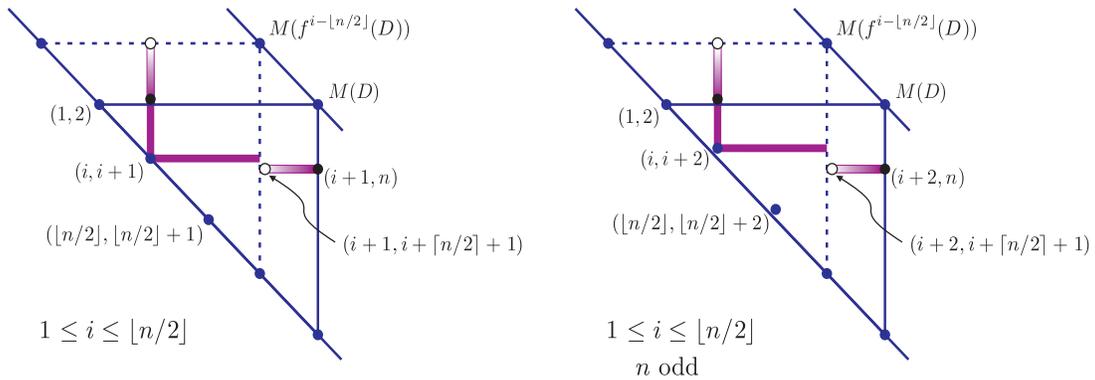}
\caption{A halving line in a drawing equivalent to $D$ seen in the matrix
$M(D)$. }
\label{HalvingEquivalent}
\end{center}
\end{figure}

\begin{lemma}
\label{start}If $D$ is crossing optimal, then there exists a drawing
$D^{\prime}$ equivalent to $D$ such that in $M(  D^{\prime}) $ the
$\lceil n/2\rceil $ entries $(  1,n) ,( 2,n) ,\ldots,$ and $( \lceil
n/2\rceil ,n)  $ are blue and the $\lfloor n/2\rfloor -1$ entries $(
1,\lceil n/2\rceil +1) $, $( 1,\lceil n/2\rceil +2) ,\ldots$, $(
1,n-1) $ are red.
\end{lemma}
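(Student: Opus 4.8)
\textbf{Proof plan for Lemma~\ref{start}.}
The goal is to find, among all drawings equivalent to $D$, one whose matrix has a clean ``canonical corner'': the entire last column (rows $1$ through $\lceil n/2\rceil$) is blue, and the top row entries in columns $\lceil n/2\rceil+1,\ldots,n-1$ are red. My plan is to locate this configuration by exploiting the cyclic freedom provided by the transformation $f$ and, if necessary, the reflections $g$ and $h$, and then to read off the required coloring from the structural results already established about invariant edges and halving edges.

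First I would use Lemma~\ref{twoofeach} together with the halving-edge description. Recall that in the strip diagram, applying powers of $f$ simply slides the triangular window along a doubly-infinite periodic strip, and every such window gives an equivalent drawing. The idea is to choose the power of $f$ (equivalently, the vertex relabeling) so that the last column of the window sits in a location where the invariant-edge structure forces a monochromatic column. Concretely, Lemma~\ref{twoofeach} says that column $n$ carries exactly two $j$-edges for each $0\le j<n/2-1$ (one for $n$ even at $j=n/2-1$); combining this with Lemma~\ref{firstrowscolumns}, which confines all $\le(\lfloor n/2\rfloor-2)$-edges to the first $\lfloor n/2\rfloor-1$ rows and last $\lfloor n/2\rfloor-1$ columns, pins down where the small-index edges of the last column must lie. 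I would then argue that the forced positions of these $\le k$-edges, read through the order associated to $(1,n)$ and the conclusions of Corollaries~\ref{OneAbove} and~\ref{AllAbove}, leave only the stated coloring as a possibility (after possibly applying $h$ to swap the two colors so that the last column comes out blue rather than red).

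The second half of the statement---that the top-row entries in columns $\lceil n/2\rceil+1$ through $n-1$ are red---I would obtain by applying the order-structure lemmas to row~$1$. Since the entry $(1,n)$ is blue (it is the fixed blue corner by our conventions, or made so via $h$), Corollary~\ref{OneAbove} applied with $i$ replaced by the relevant small row index, and more directly Lemma~\ref{order} with $i=1$, forces the first several entries in the order associated to $(1,n)$ to have entries of the opposite color above them; because row~$1$ has no entries above it, this opposite-color requirement collapses into a direct statement that those entries themselves are red. Tracking precisely which columns these are (the order associated to $(1,n)$ lists opposite-colored entries first, from the right) yields exactly columns $\lceil n/2\rceil+1,\ldots,n-1$.

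The main obstacle, I expect, is the bookkeeping needed to guarantee that a \emph{single} equivalent drawing $D'$ realizes \emph{both} halves simultaneously, rather than two different equivalent drawings each realizing one half. The transformations $f,g,h$ interact (via the relations $g\circ f=f^{-1}\circ g$, etc.), so I must choose one group element $h^a\circ g^b\circ f^{i}$ that normalizes the last column to all-blue while not destroying the forced red pattern in the top row. I would handle this by first fixing the power of $f$ that places the monochromatic column as the last column, then checking that the top-row coloring is determined up to the color swap $h$, and finally choosing $a\in\{0,1\}$ so that the last column is blue; the reflection $g$ can be used if a parity mismatch arises in which diagonal the halving edges occupy. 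Verifying that these choices are mutually consistent---that no further conflict forces the last column and top row to demand opposite applications of $h$---is the delicate point and will rely essentially on the uniqueness of the invariant $k$-edge in each relevant row guaranteed by Lemma~\ref{allprevious}.
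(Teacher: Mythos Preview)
Your proposal has a genuine gap in both halves.

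For the red entries in row~$1$: you propose applying Lemma~\ref{order} (or Corollary~\ref{OneAbove}) with $i=1$ and arguing that since there are no entries above row~$1$, the ``opposite-color above'' requirement forces the entries themselves to be red. But Lemma~\ref{order} with $i=1$ yields the bound $\min\{j-\lceil n/2\rceil-m,\,i-1\}=\min\{\cdot,0\}=0$, which is vacuous, and Corollary~\ref{OneAbove} explicitly requires $i\ge 2$. So row~$1$ gives you no direct information this way.

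For the monochromatic last column: you say you will choose a power of $f$ so that ``the invariant-edge structure forces a monochromatic column,'' but you never supply a mechanism for locating such a shift. Lemmas~\ref{twoofeach} and~\ref{firstrowscolumns} constrain where small $k$-edges can sit, but they do not by themselves single out a monochromatic column in some equivalent drawing.

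The paper's proof supplies exactly the missing idea: an \emph{extremal} argument carried out in the \emph{middle} row, where Lemma~\ref{order} is strongest. One maximizes, over all shifts $f^m(D)$, the length of the longest monochromatic run at the end of row $\lfloor n/2\rfloor$ (and symmetrically at the top of column $\lceil n/2\rceil+1$). If this maximum $E$ were less than $\lceil n/2\rceil$, then the entry just before the run is the first entry in the order associated to $(\lfloor n/2\rfloor,n)$; Lemma~\ref{order} with $i=\lfloor n/2\rfloor$ forces at least $\lfloor n/2\rfloor-1$ entries of its color above it, producing in another shift a strictly longer monochromatic run---contradiction. Hence some shift has row $\lfloor n/2\rfloor$ entirely one color; Lemma~\ref{halving} then makes the column above $(\lfloor n/2\rfloor,\lfloor n/2\rfloor+1)$ the opposite color, and a final shift by $f^{\lfloor n/2\rfloor}$ turns this monochromatic row/column pair into the desired last column and first row of $M(D')$. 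Both halves of the statement thus come out of one extremal choice, resolving automatically the ``single $D'$'' bookkeeping you were worried about.
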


\begin{proof}
For each integer $m$, let $e_{m}$ be the largest integer such that
the last $e_{m}$ entries in row $\lfloor n/2\rfloor $ of $M( f^{m}(
D)  )  $ have the same color. (These entries are $( \lfloor
n/2\rfloor ,n-e_{m}+1) $, $\ldots$, $( \lfloor n/2\rfloor ,n) $.)
Similarly, let $e_{m}^{\prime}$ be the largest integer such that the
first $e_{m}^{\prime}$ entries in column $\lceil n/2\rceil +1$ of
$M(  f^{m}( D)  ) $ have the same color. (These entries are $(
1,\lceil n/2\rceil +1) $, $\ldots$, $( e_{m}^{\prime},\lceil
n/2\rceil +1)  $.) Let $E=\max\{ e_{m},e_{m}^{\prime}:m\in\mathbb{Z}
\}  $. We claim that $E=\lceil n/2\rceil $. Indeed, suppose that
$E\leq\lceil n/2\rceil -1$ and without loss of generality assume
that $E=e_{m_{0}}$ for some integer $m_{0}$. (If
$E=e_{m_{0}}^{\prime} $, start with $g(  D)  $ instead of $D$.) Then
entry $( \lfloor n/2\rfloor ,n-e_{m_{0}})  $ has a different color
than the entries to its right, namely, $(  \lfloor n/2\rfloor
,n-e_{m_{0}}+1) ,\ldots,(  \lfloor n/2\rfloor ,n) $. By Lemma
\ref{order} (for $i=\lfloor n/2\rfloor $ and $j=n$) the entry $(
\lfloor n/2\rfloor ,n-e_{m_{0}})  $ has at least $\min\{ n-\lceil
n/2\rceil -1,\lfloor n/2\rfloor -1\}  =\lfloor n/2\rfloor -1$
entries above with the same color as $(  \lfloor n/2\rfloor
,n-e_{m_{0} })  $. But this means that $e_{m_{0}-1+\lfloor
n/2\rfloor -e_{m_{0}}}^{\prime}\geq e_{m_{0}}+1=E+1$, a
contradiction.

Because $E=e_{m_{0}}=\lceil n/2\rceil $, all entries in row $\lfloor
n/2\rfloor $ of $M(  f^{m_{0}}( D) ) $ are blue. By Lemma
\ref{halving} all entries in column $\lfloor n/2\rfloor +1$ of $M(
f^{m_{0}}( D) )  $ above the entry $( \lfloor n/2\rfloor ,\lfloor
n/2\rfloor +1)  $ are red. This implies that
$D^{\prime}=f^{m_{0}+\lfloor n/2\rfloor }( D) $ satisfies the
statement.
\end{proof}

\subsection{The structure of crossing optimal drawings}
\label{section:mainstructure}
%\subsection{The main result}

We are finally ready to investigate the structure of crossing optimal
drawings. The next result is the workhorse behind
Theorems~\ref{OptimalEven} and ~\ref{OptimalOdd}, the main results in
this section. To help comprehension, we refer the reader to Figure~\ref{evenandodd}.

\begin{theorem}
\label{mainstructure}Let $n\geq6$,
$e=0$ for $n$ even and $e=1$ for $n$ odd,
%$e=n\pmod 2$,
%$e=\lceil n/2\rceil-\lfloor n/2\rfloor $,
and let $D$ be a crossing optimal 2-page book
drawing of $K_{n}$. Then there exists a drawing $D^{\prime}$ equivalent to $D$
such that $M(D^{\prime})$ satisfies:

\begin{enumerate}
\item\label{it:uno} for $4+e\leq s\leq\lfloor n/2\rfloor +1$ and $n+2+e\leq s\leq
n+\lfloor n/2\rfloor+1 $ the entry $(  r,s-r)  $ is blue for all
$\max\{1,s-n\}\leq r\leq (s-5)/2$;

%\item for $1 \leq i \leq n-1$ the entry $(i,i+1)$ is blue;

\item\label{it:dos} for $\lceil n/2\rceil +2+e\leq s\leq n$ and $n+\lceil
n/2\rceil +2+e\leq s\leq\,2n-2-e$ the entry $(  r,s-r) $ is red for
all $\max\{1,s-n\}\leq r\leq (s-5)/2$ (except for $(1,n)$, which by
convention is blue);

\item\label{it:tres} for $n$ odd, the entries $(1,\lceil n/2\rceil +1)$ and
$(\lfloor n/2\rfloor ,\lceil n/2\rceil +1)$ are red, and the entries
$(2,n)$ and $(\lceil n/2\rceil ,\lceil n/2\rceil +2)$ are blue.
\end{enumerate}
\end{theorem}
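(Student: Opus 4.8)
The plan is to leverage the structural lemmas already established, especially Lemma~\ref{start}, Lemma~\ref{order}, and the two corollaries, to pin down the colors of entries along antidiagonals of $M(D')$. First I would invoke Lemma~\ref{start} to replace $D$ by an equivalent drawing $D'$ in which the first column (entries $(1,n),(2,n),\ldots,(\lceil n/2\rceil,n)$) is blue and the entries $(1,\lceil n/2\rceil+1),\ldots,(1,n-1)$ are red. This fixes a ``seed'' along the top row and right column from which the antidiagonal structure can be propagated. The parametrization by $s=r+(s-r)$ (the antidiagonal sum) in items~\ref{it:uno} and~\ref{it:dos} suggests proceeding antidiagonal by antidiagonal, or equivalently row by row using the \emph{order associated to} $(i,j)$, which is precisely the combinatorial device Lemma~\ref{order} controls.

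The core of the argument will be an induction (or a direct case analysis) on the entries $(r,s-r)$ for increasing $s$. For each target entry I expect to use Corollary~\ref{OneAbove} and Corollary~\ref{AllAbove} to force color constraints: Corollary~\ref{AllAbove} guarantees that for $2\le i\le\lfloor n/2\rfloor-1$ and $j\ge\lceil n/2\rceil+i$, the first $j-i+1-\lceil n/2\rceil$ entries in the order associated to $(i,j)$ have \emph{all} entries above them of the opposite color, which is exactly the kind of rigidity needed to deduce that a whole block of antidiagonal entries shares a color. I would translate the ``order associated to $(i,j)$'' statements back into statements about the colors of specific matrix entries $(r,s-r)$, using Lemma~\ref{OrderObservation} to identify which entries occupy the first few positions in that order. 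The halving-edge constraints from Lemma~\ref{halving} and Lemma~\ref{halvingMobius} then fix the colors on and near the main antidiagonal $\{(i,i+1)\}$ (and $\{(i,i+2)\}$ for $n$ odd), which is the content of item~\ref{it:tres} and the boundary cases $r=(s-5)/2$ of items~\ref{it:uno} and~\ref{it:dos}.

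For the odd case specifically (item~\ref{it:tres}), I would argue that the entries $(1,\lceil n/2\rceil+1)$ and $(\lfloor n/2\rfloor,\lceil n/2\rceil+1)$ are red directly from Lemma~\ref{start} and Lemma~\ref{halving} (these lie in column $\lceil n/2\rceil+1$, which Lemma~\ref{halvingMobius} constrains), while $(2,n)$ blue follows from the seed in Lemma~\ref{start}, and $(\lceil n/2\rceil,\lceil n/2\rceil+2)$ blue follows from the second-diagonal halving statement in Lemma~\ref{halvingMobius}. The cleanest route is probably to handle the two halves of each item ($s$ near the diagonal versus $s$ shifted by $n$, reflecting the periodicity of the strip diagram) simultaneously by appealing to the fact that any triangular cut of the strip diagram is equivalent to $D$; thus the ``$s$ in $[n+2+e,\,n+\lfloor n/2\rfloor+1]$'' ranges are just the images under $f$ of the ``$s$ in $[4+e,\,\lfloor n/2\rfloor+1]$'' ranges, and proving one yields the other.

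The hard part will be bookkeeping the interaction between the two color regimes (the blue region of item~\ref{it:uno} meeting the red region of item~\ref{it:dos}) precisely at the boundary antidiagonals near $s\approx n$, and making sure the inductive hypotheses line up with the exact index ranges $\max\{1,s-n\}\le r\le(s-5)/2$. In particular, verifying that the color forced by the ``order associated to'' argument for one entry is consistent with the color independently forced for a neighboring entry on an adjacent antidiagonal—so that no contradiction arises and the full antidiagonal is monochromatic—requires carefully tracking the off-by-one shifts introduced by $\lfloor n/2\rfloor$ versus $\lceil n/2\rceil$ in the even and odd cases. I expect the parity split (the parameter $e$) to be the main source of friction, and I would isolate it by first proving the even case cleanly and then identifying exactly which inequalities shift by one in the odd case.
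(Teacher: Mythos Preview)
Your plan identifies the right toolkit (Lemma~\ref{start}, Lemma~\ref{order}, Corollaries~\ref{OneAbove} and~\ref{AllAbove}, Lemma~\ref{halvingMobius}) and the right high-level strategy (seed the last column and part of the top row, then propagate via the order associated to $(i,j)$, and use $f$-equivalence for the second block of each item). In that sense it aligns with the paper's approach, which partitions $M(D')$ into a rectangle $R(D)$ and two triangles $T_U(D),T_L(D)$, handles $R(D)$ by a row-by-row induction, then $T_U(D)$ by a column-by-column induction driven by Lemma~\ref{halvingMobius}, and finally reduces $T_L(D)$ to $T_U(f^{\lceil n/2\rceil}(D))$.

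There is, however, a genuine gap in your plan. Lemma~\ref{start} by itself does not pin down enough to launch the induction: after applying it you still do not know the color of $(2,n-1)$, and the paper's row-by-row argument in $R(D)$ needs $(2,n-1)$ red to get started. The paper handles this by possibly replacing $D'$ again, this time by $h\circ g(D')$. More seriously, in the odd case the paper has to make \emph{further} such replacements mid-proof: to force $(\lfloor n/2\rfloor,\lceil n/2\rceil+1)$ red it may pass to $f^{\lfloor n/2\rfloor}(D)$, and in the argument for the extra diagonal $(r,n-r+1)$ it may pass to $h\circ g\circ f^{\lceil n/2\rceil}(D)$. Each time one must check that the previously established color constraints survive the replacement. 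Your proposal treats the choice of $D'$ as a one-shot application of Lemma~\ref{start}, which is not enough.

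Relatedly, your treatment of item~\ref{it:tres} is too optimistic. Lemma~\ref{halving} only tells you that $(\lfloor n/2\rfloor,\lceil n/2\rceil+1)$ is a halving edge, not that it is red; its color is genuinely free until you exploit the freedom to pass to an equivalent drawing. Likewise $(\lceil n/2\rceil,\lceil n/2\rceil+2)$ blue is not a direct consequence of Lemma~\ref{halvingMobius}: in the paper it falls out only after one has already shown that every entry above $(\lceil n/2\rceil,\lceil n/2\rceil+1)$ is red, which is part of the $R(D)$ analysis. Finally, your appeal to strip periodicity to transfer one $s$-range to the other is correct in spirit for $T_L(D)$ versus $T_U(D)$, but for odd $n$ the rectangles $R(D)$ and $R(f^{\lceil n/2\rceil}(D))$ differ on two diagonals, so the transfer is not automatic and requires re-running (a slight variant of) the $T_U$ argument rather than a one-line symmetry.
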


\begin{figure}
[h]
\begin{center}
\includegraphics[
width=6.5in ] {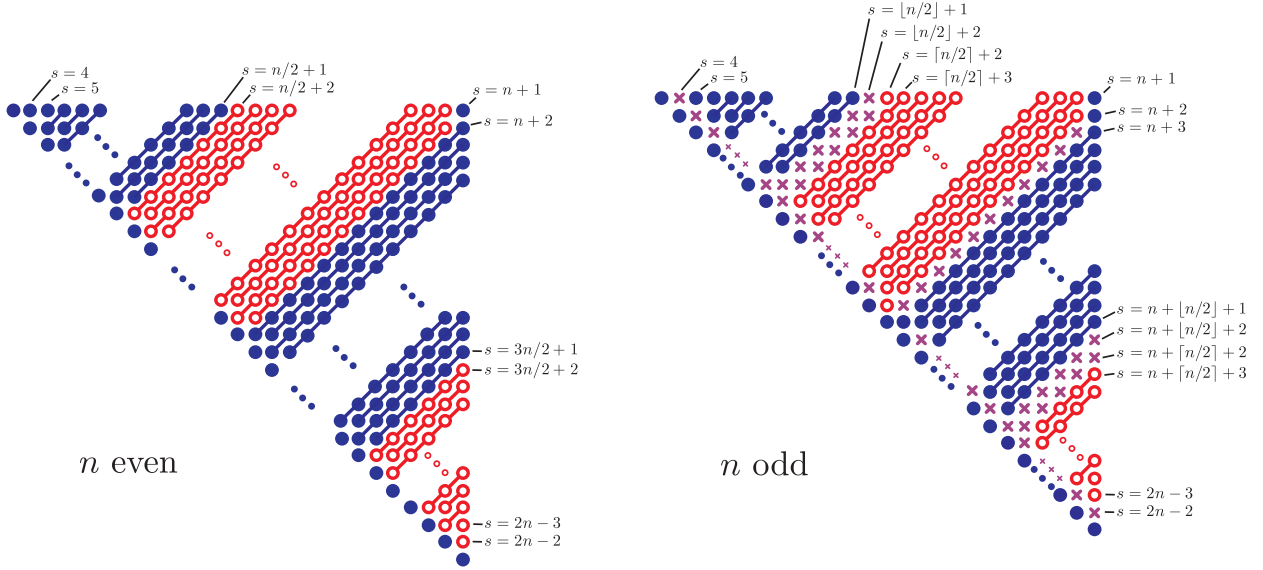} \caption{The even and odd cases in
Theorem \ref{mainstructure}. The crosses in
the odd case represent points whose color is not fixed.}%
\label{evenandodd}
\end{center}
\end{figure}

\begin{proof}
Let
\begin{align*}
T_{U}(D)  & =\{(r,c)\in M(D):2\leq c\leq\left\lceil n/2\right\rceil ,1\leq
r\leq c-1\},\\
R(D)  & =\{(r,c)\in M(D):\left\lceil n/2\right\rceil +1\leq c\leq n,1\leq
r\leq\left\lceil n/2\right\rceil \},\text{ and}\\
T_{L}(D)  & =\{(r,c)\in M(D):\left\lceil n/2\right\rceil +1\leq c\leq
n,\left\lceil n/2\right\rceil +1\leq r\leq c-1\}.
\end{align*}

We shall prove the theorem first for those entries that lie on
$R(D)$, then for those that lie on $T_U(D)$, and finally for those
that lie on $T_L(D)$.
\vglue 0.2 cm
\noindent{\em The entries in
$R(D)$}
\vglue 0.2 cm

We refer the reader to
 Figure \ref{equality01}. By Lemma
\ref{start}, we can assume that in $M(D)  $
\begin{equation}
\text{the entries }(1,n),(2,n),\ldots,(\lfloor n/2 \rfloor  ,n)\text{
are blue}\label{initialcolumn}%
\end{equation}
(in fact $(\lceil n/2 \rceil,n)$ can also be assumed to be blue but we do not use this fact) and%
\begin{equation}
\text{the entries }(1,\left\lceil n/2\right\rceil +1),\ldots,(1,n-1)\text{ are
red.}\label{initialrow}%
\end{equation}
Moreover, we can assume that
\begin{equation}
\text{the entry }(2,n-1)\text{ is red.}\label{initialextrapoint}%
\end{equation}
(If it is blue, then $M(h\circ g(D))$ satisfies
(\ref{initialcolumn}), (\ref{initialrow}), and
(\ref{initialextrapoint})).
\begin{figure}
[ptb]
\begin{center}
\includegraphics[
height=2.4777in,
width=4.8153in]
{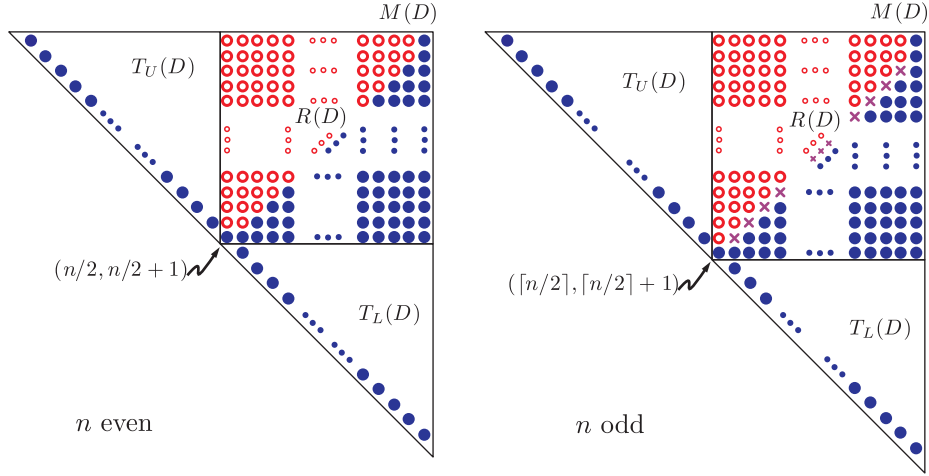}
\caption{The regions $T_{U}(D),R(D)$, and $T_{L}(D)$.}
\label{equality01}
\end{center}
\end{figure}

We now prove that for each $r$ such that $2\leq
r\leq \lfloor{n/2}\rfloor$,
\begin{equation}
\text{the entries }(  r,\lceil n/2\rceil +1) ,( r,\lceil n/2\rceil
+2)  ,\ldots,( r,2\lfloor n/2\rfloor -r+1)  \text{ are
red}\label{SquareUpper}
\end{equation}
and
\begin{equation}
\text{the entries }\left(  r,2\left\lceil n/2\right\rceil
-r+2\right) ,\left(  r,2\left\lceil n/2\right\rceil -r+3\right)
,\ldots,(r,n)\text{ are blue.}\label{SquareLower}
\end{equation}

%\hl{We remark that to prove}~\eqref{SquareUpper}
%  \hl{and}~\eqref{SquareLower} we only use~\eqref{initialcolumn}
%  and~\eqref{initialrow}.

Observe that if $r=2$ and $n$ is even, then
\eqref{SquareLower} only concerns the entry $(2,n)$, which is
  blue by \eqref{initialcolumn}. (For $r=2$ and $n$  odd,
\eqref{SquareLower} is an empty claim.)
% (moreover, we note that
%\by~\eqref{initialcolumn},\comm{used to be ``by convention''}
%\the entry $(r,n)$ is blue for every $2\le r\le
% \lfloor{n/2}\rfloor$.)
Thus we only need to take care of the
  base case $r=2$ for \eqref{SquareUpper}.  Since (by
\eqref{initialextrapoint}) the entry $( 2,n-1) $ is red, by
Corollary \ref{OneAbove} the first $\lfloor n/2\rfloor -2$ entries
in the order associated to $( 2,n-1) $ have a blue point above. By
(\ref{initialrow}) the only candidates to have blue points above
them are the $\lceil n/2\rceil -2$ entries $( 2,3) ,( 2,4) ,\ldots,(
2,\lceil n/2\rceil ) $. (Note that the order associated to the entry
$( i,j) $ only applies to entries in row $j$ to the left of entry $(
i,j) $.) Thus the $\lceil n/2\rceil -2$ entries $( 1,3) ,( 1,4)
,\ldots,( 1,\lceil n/2\rceil ) $ are blue if $n$ is even, and at
most one of them, say $(1,c_{1})$, is red if $n$ is odd. Moreover,
by Lemma \ref{OrderObservation} the entries $( 2,\lceil n/2\rceil
+1) ,(2,\lceil n/2\rceil +2),\ldots,( 2,n-2) $ are red.

For the inductive step, suppose that for some $3\leq t\leq\lfloor
n/2\rfloor $, each row $r$ with $2\leq r\leq t-1$ satisfies the
result. We now prove (\ref{SquareUpper}) and (\ref{SquareLower}) for
$r=t$. Suppose that the entry $( t,2\lceil n/2\rceil -t+2) $ is red.
Then by Corollary \ref{OneAbove} each of the first $\lceil n/2\rceil
-t+1$ entries in the order associated to $( t,2\lceil n/2\rceil
-t+2)  $ has at least one blue entry above. Since the entries
$(t,\lceil n/2\rceil +1),\ldots,(t,2\lfloor n/2\rfloor -t+2)$ have
all red above, the only candidates are the $\lceil n/2\rceil -t$
entries $(t,t+1),(t,t+2),\ldots,(t,\lceil n/2\rceil )$ and the entry
$2\lfloor n/2\rfloor -t+3=2\lceil n/2\rceil -t+1$ for odd $n$. But,
by Lemma \ref{OrderObservation}, to be a candidate this last entry
should be blue, which is impossible because it would be the first
entry in the order associated to $(t,2\lceil n/2\rceil -t+2)$ with
at most one blue entry above, contradicting Lemma \ref{order}. Since
there are not enough candidates, then the entry $( t,2\lceil
n/2\rceil -t+2)  $ is blue.

Now consider the blue entry $(  t,n)  $. By Corollary \ref{AllAbove}
the first $\lfloor n/2\rfloor -t+1$ entries in the order associated
to $(  t,n)  $ have all entries above them red. The only candidates
are $(t,c_{1})$ if it exists, $( t,\lceil n/2\rceil +1) ,\ldots,(
t,2\lceil n/2\rceil -t+1)  $. For $n$ even, there are $\lceil
n/2\rceil -t+1=\lfloor n/2\rfloor -t+1$ candidates because
$(t,c_{1})$ does not exists, and thus all of them are red by Lemma
\ref{OrderObservation}. For $n$ odd, there are at most 2 more
candidates than we need. By Lemma \ref{OrderObservation} any blue
entry $(t,c)$ with $c\geq\lfloor n/2\rfloor +2$ is not a candidate.
Thus at most two of the last $\lceil n/2\rceil -t+1$ candidates are
blue. Suppose that one of the entries $( t,\lceil n/2\rceil +1) ,(
t,\lceil n/2\rceil +2) ,\ldots,( t,2\lfloor n/2\rfloor -t+1)  $ is
blue. Then there exists $\lceil n/2\rceil +1\leq c\leq2\lceil
n/2\rceil -t$ such that $( t,c) $ is blue and $(t,c+1)$ is red. Then
$(t,c)$ is the first entry in the order associated to $(t,c+1)$ and
all entries above it are red, contradicting Corollary
\ref{OneAbove}. Thus (\ref{SquareUpper}) holds and, by Lemma
\ref{OrderObservation} for $(i,j)=(t,n)$, the rest of
(\ref{SquareLower}) holds too.

Note that \eqref{SquareUpper} is vacuous if
  $r=\lfloor{n/2}\rfloor$ and $n$ is odd. On the other hand, we argue that it is possible to assume that
\begin{equation}
\text{for odd $n$, the entry }(\lfloor n/2 \rfloor, \lceil n/2
\rceil +1)\text{ is red.} \label{extradiagonal2}
\end{equation}

Indeed, suppose that it is blue. Then, by Lemma \ref{halving},
$(\lfloor n/2\rfloor ,\lceil n/2\rceil +1)$ is a blue halving entry
with $\lfloor n/2\rfloor -1$ red entries above and thus all $\lfloor
n/2\rfloor -1$ entries to its right are blue. Hence, by Lemma
\ref{halving}, $(\lfloor n/2\rfloor ,\lceil n/2\rceil )$ is halving
with $\lfloor n/2\rfloor $ blue entries to its right and thus all
$\lfloor n/2\rfloor -1$ entries above are red. Note that
$M(f^{\lfloor n/2\rfloor }(D))$ satisfies (\ref{initialcolumn}),
(\ref{initialrow}), and (\ref{initialextrapoint}) and its entry
$(\lfloor n/2\rfloor ,\lceil n/2\rceil +1)$ is red. Then we start
with $f^{\lfloor n/2\rfloor }(D)$ instead of $D$.

We now prove that the version of \eqref{SquareLower} for
$r=\lceil{n/2}\rceil$ also holds:
\begin{equation}
\text{the entries } \left(  \lceil{n/2}\rceil,\left\lceil n/2\right\rceil +2\right)
,\left(  \lceil{n/2}\rceil,\left\lceil n/2\right\rceil +3\right)  ,\ldots,(\lceil{n/2}\rceil,n)\text{ are
blue.}\label{SquareLowerExtra}%
\end{equation}

Note that \eqref{SquareLowerExtra} only needs to be proved for odd
$n$, since for even $n$  this is the case $r=\lfloor{n/2}\rfloor$
in \eqref{SquareLower}.
%Using \eqref{SquareUpper} with $r=\lfloor{n/2}\rfloor$, we obtain
%that the entry $(\lfloor{n/2}\rfloor,\lceil{n/2}\rceil+1)$ is red.
Using \eqref{initialcolumn} and
\eqref{SquareUpper} it follows that all
the entries above $(\lceil{n/2}\rceil,\lceil{n/2}\rceil+1)$ are
red. By Lemma~\ref{halving}
$(\lceil{n/2}\rceil,\lceil{n/2}\rceil+1)$ is a
  halving entry, and so it follows that all the entries to its right
  are blue. This proves (\ref{SquareLowerExtra}).

We now prove that for $2\leq r\leq\lfloor n/2\rfloor -1 $
\begin{equation}
\text{for odd $n$, the entry }(r,n-r+1)\text{ is red.}\label{extradiagonal}%
\end{equation}
Note that \eqref{extradiagonal2} is a version of
\eqref{extradiagonal} for $r=\lfloor n/2 \rfloor$. Observe that
$M(f^{\lceil n/2\rceil }(D))$ satisfies (\ref{initialcolumn}) and
(\ref{initialrow}). If $(2,n-1)$ is red in $M(f^{\lceil n/2\rceil
}(D))$, then the diagonal $(r,n-r)$ with $1\leq r\leq\lfloor
n/2\rfloor -1$ in $M(f^{\lceil n/2\rceil }(D))$ is red by
(\ref{SquareUpper}). This corresponds to the diagonal $(r,n-r+1)$
with $2\leq r\leq\lfloor n/2\rfloor $ in $M(D)$. So now assume that
the entry $(2,n-1)$ is blue in $M(f^{\lceil n/2\rceil }(D))$, which
corresponds to $(\lfloor n/2\rfloor ,\lceil n/2\rceil +2)$ being
blue in $M(D)$. In this case, we can assume that $(1,\lceil
n/2\rceil ) $ is blue. (Otherwise start with $M(h\circ g\circ
f^{\lceil n/2\rceil }(D))$ instead of $D$, which satisfies
(\ref{initialcolumn}), (\ref{initialrow}),
(\ref{initialextrapoint}), $(\lfloor n/2\rfloor ,\lceil n/2\rceil
+1)$ is red, and $(1,\lceil n/2\rceil )$ is blue.) Now, by Lemma
\ref{halving}, $(\lfloor n/2\rfloor ,\lfloor n/2\rfloor +1)$ is a
halving entry with $\lfloor n/2\rfloor $ of the entries in
(\ref{halvingMup}) blue, then all others must be red, i.e.,
$(2,\lceil n/2\rceil ),(3,\lceil n/2\rceil ),\ldots,(\lfloor
n/2\rfloor -1,\lceil n/2\rceil )$ are red. Assume by contradiction
that $(r,n-r+1)$ is blue for some $2\leq r\leq\lfloor n/2\rfloor
-1$. Then $(r,n-r+2)$ is blue, otherwise $(r,n-r+1)$ would be the
first entry in the order associated to $(r,n-r+2)$ with no blue
entry above, contradicting Corollary \ref{OneAbove}. But now the red
entry $(r,\lceil n/2\rceil )$ is the $(\lceil n/2\rceil -r)^{th}$
entry in the order associated to the blue entry $(r,n)$ with a blue
entry above, contradicting Corollary \ref{AllAbove} and proving
\eqref{extradiagonal}.

\begin{figure}
[ptb]
\begin{center}
\includegraphics[
height=2.4751in,
width=4.8862in]
{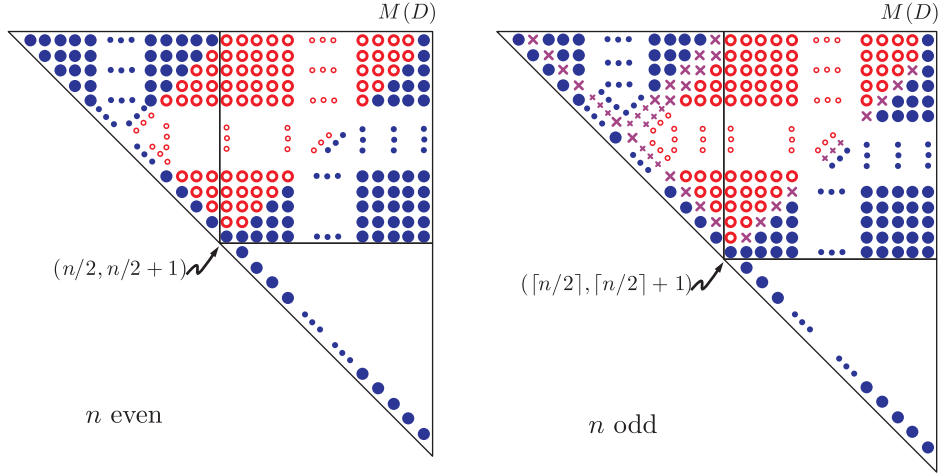}
\caption{The upper triangle $T_{U}(D)$ for even and odd $n$ in the proof of
Theorem \ref{mainstructure}.}
\label{equality02}
\end{center}
\end{figure}

We finally observe that \eqref{initialcolumn}, \eqref{initialrow},
\eqref{initialextrapoint}, \eqref{SquareUpper}, \eqref{SquareLower},
 \eqref{extradiagonal2}, \eqref{SquareLowerExtra}, and
\eqref{extradiagonal} prove Theorem~\ref{mainstructure} for the
entries in $R(D)$.

\vglue 0.2 cm
\noindent{\em The entries in $T_{U}(D)$}
\vglue 0.2 cm

We refer the reader to Figure~\ref{equality02}. We prove by
induction on $c$ that for $1\leq c\leq\lfloor \frac{1}{2} \lceil n/2
\rceil \rfloor $,
\begin{equation}
\text{the entries }(  c+e,\lceil n/2\rceil +2-c) ,\ldots,( \lfloor
n/2\rfloor -c,\lceil n/2\rceil
+2-c)  \text{ are red,}\label{columnred}%
\end{equation}
and%
\begin{equation}
\text{the entries }(  1,\lceil n/2\rceil +2-c) ,\ldots,(
c-1-e,\lceil n/2\rceil +2-c) \text{ are
blue.}\label{columnblue}%
\end{equation}

We have proved it for $c=1$. Suppose that the result holds for all
$1\leq c\leq d-1$ and we now prove it for $c=d$. By Lemma
\ref{halvingMobius} for $i=\lceil n/2\rceil +1-d$, and since by
(\ref{SquareLower}) the $\lfloor n/2\rfloor -d$ entries
$\{(i,b)\mid2\lceil n/2\rceil -i+2\leq b\leq i+\lceil n/2\rceil
\}\cup \{(i+1,b)\mid i+\lceil n/2\rceil +1\leq b\leq n\}$ in
(\ref{halvingMup}) are blue, then $(i,i+1)$ has at most $d-1+e$ blue
entries above. Suppose by contradiction that $(r,i+1)$ is blue for
some $d+e\leq r\leq\lfloor n/2\rfloor -d$. Then $(r,i+1)$ is the
first entry in the order associated to $(r,n-r+1)$ and has at most
$\lceil n/2\rceil -1-(\lfloor n/2\rfloor -d)-1=d-2+e$ blue entries
above. By Lemma \ref{order}, $(r,i+1)$ has at least $\min\{\lfloor
n/2\rfloor -r,r-1\}$ blue entries above and thus $\min\{\lfloor
n/2\rfloor -r,r-1\}\leq d-2+e$. But $r-1>d-2+e$ because $r\geq d+e$,
and $\lfloor n/2\rfloor -r\geq d>d-2+e$ because $r\leq\lfloor
n/2\rfloor -d$. Thus (\ref{columnred}) holds for $c=d$.

Look at $(i,i+1)$ again. The $\lfloor n/2\rfloor -1-3e$ entries
$\{(r,i+1)\mid d+e\leq r\leq i-1-e\}\cup\{(i,b)\mid i+2+e\leq b\leq
n-i+1\}$ in (\ref{halvingMup}) are red and thus, by Lemma
\ref{halvingMobius}, at most other $4e$ entries are red. For $n$
even, $4e=0$ and thus (\ref{columnblue}) holds. For $n$ odd, suppose
by contradiction that $(d-e,i+1)$ has a red entry above. We prove
that in this case the entries $(d-e,i+1),(d-e+1,i+1),$ and
$(i-1,i+1)$ are red. Since $(d-e,n+1-d+e)$ is red, then by Corollary
\ref{AllAbove} the first $\lfloor n/2\rfloor +2-2d+2e$ entries in
the order associated to $(d-e,n+1-d+e)$ have only blue entries
above. If $(d-e,i+1)$ were blue, then it would be one of the first
two entries in the order associated to $(d-e,n+1-d+e)$ with at least
one red point above. This means that $1\geq\lfloor n/2\rfloor
+2-2d+2e$ contradicting that $d\leq\lfloor \frac{1}{2} \lceil n/2
\rceil \rfloor $. Thus $(d-e,i+1)$ is red. Similarly, $(d-e+1,i+1)$
cannot be blue as it would be the first entry in the order
associated to $(d-e+1,n-d+e)$, which by Lemma \ref{order} should
have at most one red entry above, but $(d-e+1,i+1)$ has now at least
2 red entries above. Now $(i-1,i+1)$ is the first entry for
$(i-1,n+2-i)$ and, by (\ref{columnred}), it has at least $\lceil
n/2\rceil +1-2d+e$ red entries above, i.e., at most $d-2-e$ blue
entries above. But by Lemma \ref{order}, the first entry in the
order associated to the red entry $(i-1,n+2-i)$ has at least
$\min\{d-1,i-2\}$ blue entries above. Thus $\min\{d-1,i-2\}\leq
d-2-e$, but $d-1>d-2-e$ and $i-2>d-2-e$ because $d\leq\lfloor
\frac{1}{2} \lceil n/2  \rceil \rfloor $, getting a contradiction.
Hence $(i-1,i+1)$ is red. By Lemma \ref{halvingMobius} at most
$\lfloor n/2\rfloor $ of the entries in (\ref{halvingUpseconddiag})
are red, yet we already have $\lceil n/2\rceil $ red entries
(namely, at least the $\lceil n/2\rceil +1-2d+e$ above $(i-1,i+1)$
mentioned before and the $2d-2$ entries $\{(i-1,b)\mid i+2\leq b\leq
n-i+2\}$ to its right), getting a contradiction. Thus
(\ref{columnblue}) holds for $c=d$.

Now we prove that for $2\leq c\leq\lceil \frac{1}{2}\lceil n/2\rceil
\rceil +1$,
%in column $c$
\begin{equation}
\text{the entries }(1,c),(2,c),\ldots,(c-2-e,c)\text{ are blue.}%
\label{halfbluetriang}%
\end{equation}
Since $(c-1,c)$ is one of the $\lfloor n/2\rfloor +5-2c$ entries in
the order associated to the red entry $(c-1,n+2-c)$ (we have shown
that the $\lfloor n/2\rfloor -1-e$ entries immediately to the left
of $(n+2-c)$ are red), then $(c-1,c)$ has at most one red entry
above by Lemma \ref{order}. Suppose by contradiction that $(r,c)$ is
red for some $1\leq r\leq c-2-e$. Then $(r+1,c)$ is blue. Since
$(r+1,n-r)$ is red, then by Corollary \ref{AllAbove} the first
$\lfloor n/2\rfloor -2r$ entries in the order associated to
$(r+1,n-r)$ have only blue entries above. But $(r+1,c)$ is one of
the first $\lfloor n/2\rfloor -2r$ entries and
has the red entry $(r,c)$ above, getting a contradiction.%

We finally note that \eqref{columnred}, \eqref{columnblue}, and
\eqref{halfbluetriang} prove Theorem~\ref{mainstructure} for the
entries in $T_{U}(D)$.

\vglue 0.2 cm \noindent{\em The entries in $T_{L}(D)$} \vglue 0.2 cm
\begin{figure}
[ptb]
\begin{center}
\includegraphics[
height=3.3036in, width=5.9248in ] {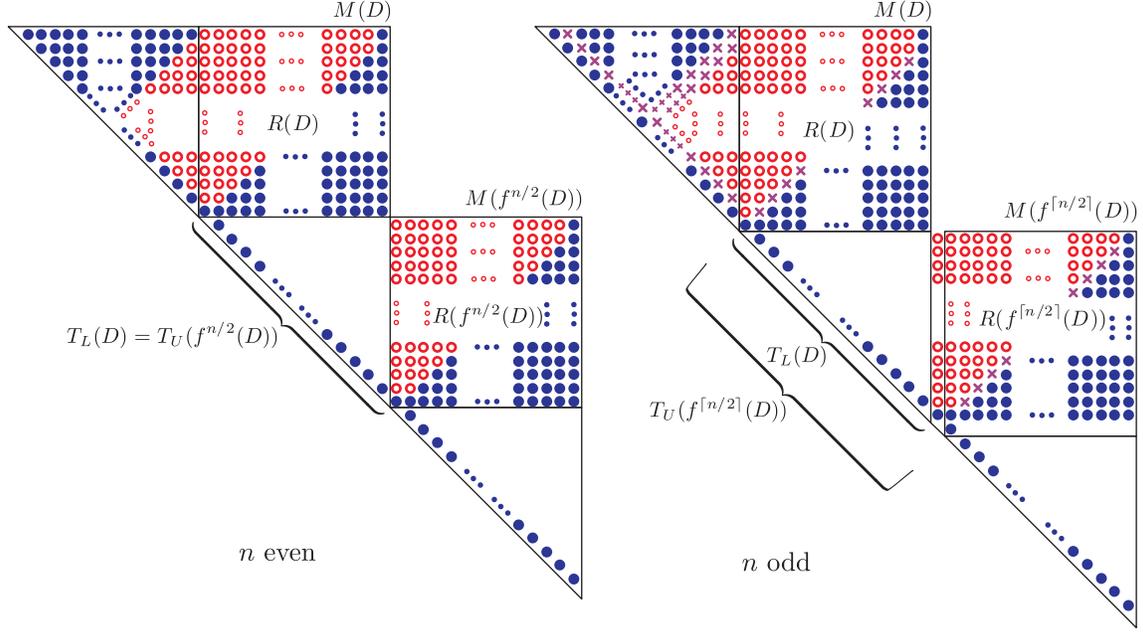} \caption{The
lower triangle $T_{L}(D)$ versus the upper triangle
$T_{U}(f^{\left\lceil n/2\right\rceil }(D))$ for even and odd $n$ in
the proof of Theorem \ref{mainstructure}.}
\label{equality03}%
\end{center}
\end{figure}

We refer the reader to Figure \ref{equality03}. Consider $f^{\lceil
n/2\rceil }(D)$. When $n$ is even, see Figure \ref{equality03}
(left), $R(D)$ and $R(f^{\lceil n/2\rceil }(D))$ are identical and
thus our previous arguments show that $T_{U}(D)$ and
$T_{U}(f^{\lceil n/2\rceil }(D))=T_{L}(D)$ are identical too,
concluding the proof in this case. When $n$ is odd, see Figure
\ref{equality03} (right), $R(D)$ and $R(f^{\lceil n/2\rceil }(D))$
are slightly different:\ for $2 \leq r \leq \lfloor n/2 \rfloor$ the
diagonal entries $(r,n+1-r)$
% $(3,n+1-r),\ldots,(\lfloor n/2\rfloor ,n+1-r)$
are red in $R(D)$ and unfixed in $R(f^{\lceil n/2\rceil }(D))$, and
for $3 \leq r \leq \lfloor n/2 \rfloor$ the diagonal entries
$(r,n+2-r)$
% ,\ldots,(\lfloor n/2\rfloor ,n+2-r)$
are unfixed in $R(D)$ and blue in $R(f^{\lceil n/2\rceil }(D))$.
Also the last row of $R(D)$ is blue and the last row of $R(f^{\lceil
n/2\rceil }(D))$ is unfixed. However, the last column of
$T_{U}(f^{\lceil n/2\rceil }(D))$ is red and this is what allows us
to mimic the arguments used for (\ref{columnred}),
(\ref{columnblue}), and (\ref{halfbluetriang}) to show that
$T_{L}(D)$, which corresponds to $T_{U}(f^{\lceil n/2\rceil }(D))$
minus its last column, satisfies the statement. More precisely, it
can be proved by induction on $c$ that for $1\leq c\leq\lfloor
\frac{1}{2} \lceil n/2  \rceil \rfloor$, in $M(f^{\lceil n/2\rceil
}(D))$
\begin{equation}
\text{the entries }(  c+1,\lceil n/2\rceil +1-c) ,\ldots,( \lfloor
n/2\rfloor -c-1,\lceil n/2\rceil
+1-c)  \text{ are red}%
\label{columnred2}
\end{equation}
and%
\begin{equation}
\text{the entries }(  1,\lceil n/2\rceil +1-c) ,\ldots,( c-2,\lceil
n/2\rceil +1-c)  \text{
  are blue.}
\label{columnblue2}
\end{equation}

We omit the proofs of \eqref{columnred2} and \eqref{columnblue2}, as
they very closely resemble the proofs of \eqref{columnred}
and \eqref{columnblue}.

Similarly, it can be proved by induction that for $2\leq c\leq\lceil
\frac{1}{2} \lceil n/2\rceil \rceil $, in  $M(f^{\lceil
n/2\rceil }(D))$%
\begin{equation}
\text{the entries }(1,c),(2,c),\ldots,(c-3,c)\text{ are blue.}%
\label{halfbluetriang2}
\end{equation}

The proof of \eqref{halfbluetriang2} is also omitted, as it
very closely resembles the proof of \eqref{halfbluetriang}.

We finally note that \eqref{columnred2}, \eqref{columnblue2}, and
\eqref{halfbluetriang2} prove Theorem~\ref{mainstructure} for the
entries in $T_{U}(L)$.
\end{proof}

\subsection{The number of crossing optimal drawings}

Theorem \ref{mainstructure} completely determines $M(D^{\prime})$ when $n$ is
even, which means that in this case there is essentially only one crossing optimal drawing.

\begin{theorem}
\label{OptimalEven}For $n$ even, up to homeomorphism, there
is a unique crossing optimal 2-page book drawing of $K_{n}$.
\end{theorem}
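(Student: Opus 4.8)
The plan is to read the statement off from Theorem~\ref{mainstructure}, whose even case pins down the entire $2$-page matrix; the only work left is to turn ``the matrix is determined'' into ``the drawing is unique up to homeomorphism,'' and to dispatch the two small even cases excluded by the hypothesis $n\ge 6$. Existence comes for free: by Theorem~\ref{TheResult} the Bla\v{z}ek--Koman drawings realize $\nu_2(K_n)=Z(n)$, so a crossing optimal $2$-page book drawing exists. For $n\in\{2,4\}$ one has $Z(n)=0$, and I would simply check by hand that a planar $2$-page book drawing of $K_2$ or $K_4$ is unique up to homeomorphism.

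For even $n\ge 6$, let $D$ be an arbitrary crossing optimal $2$-page book drawing; after the standing normalization of Section~\ref{s:2page} (the edges $(1,2),\dots,(n-1,n)$ on the spine, $(n,1)$ in the upper page, all remaining edges semicircles) we may present $D$ by its matrix $M(D)$. By Theorem~\ref{mainstructure} (with $e=0$) there is a drawing $D'$ equivalent to $D$, hence also crossing optimal, such that every entry of $M(D')$ is assigned a fixed color. The point special to the even case is that \emph{no} entry is left free: the three regions $R(D'),T_U(D'),T_L(D')$ introduced in the proof of Theorem~\ref{mainstructure} exhaust all pairs $(r,c)$ with $1\le r<c\le n$, and equations \eqref{SquareUpper}--\eqref{halfbluetriang2}, together with the blue main diagonal and the blue corner entry $(1,n)$, fix the color of every such entry (including the near-diagonal entries with $c-r\in\{2,3,4\}$ and the antidiagonal $r+c=n+1$, which are not literally listed among conditions~\ref{it:uno}--\ref{it:dos}). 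Hence $M(D')$ is the \emph{same} fixed two-colored matrix for every crossing optimal $D$.

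I would then package this into the claimed uniqueness as follows. Given two crossing optimal drawings $D_1,D_2$, the previous paragraph produces equivalent drawings $D_1',D_2'$ with $M(D_1')=M(D_2')$. Since two edges placed on the same page cross precisely when their endpoints interleave on the spine, the $2$-page matrix together with the fixed left-to-right vertex order determines a $2$-page book drawing up to homeomorphism; thus $D_1'$ and $D_2'$ are homeomorphic. As equivalent drawings are homeomorphic (Section~\ref{section:equivalent}), $D_1$ is homeomorphic to $D_1'$ and $D_2$ to $D_2'$, and therefore $D_1$ and $D_2$ are homeomorphic to one another. Combined with existence, this is exactly the assertion that the crossing optimal $2$-page book drawing of $K_n$ is unique up to homeomorphism when $n$ is even.

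The genuinely delicate step is the claim that no entry of $M(D')$ is undetermined when $e=0$, and I expect this bookkeeping to be the main (if routine) obstacle. I would make it airtight by arguing from the proof of Theorem~\ref{mainstructure} rather than from the bare statement: verifying that $R\cup T_U\cup T_L$ really is the whole matrix and that each displayed equation covers exactly the entries it names. By contrast, conditions~\ref{it:uno} and~\ref{it:dos} on their face constrain only the entries with $c-r\ge 5$, so quoting them alone would leave a genuine gap near the main diagonal.
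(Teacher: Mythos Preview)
Your proposal is correct and follows the same approach as the paper: for even $n\ge 6$, invoke Theorem~\ref{mainstructure} to see that $M(D')$ is completely determined (and coincides with the Bla\v{z}ek--Koman matrix), and handle $n\in\{2,4\}$ by inspection. Your extra care---flagging that conditions~\ref{it:uno}--\ref{it:dos} literally constrain only entries with $c-r\ge 5$, so that one must read the proof (or Figure~\ref{evenandodd}) of Theorem~\ref{mainstructure} to see that \emph{every} entry is fixed---is a valid refinement that the paper's one-line proof glosses over.
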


\begin{proof}
The result is easily seen to hold  for $n=2$ and $n=4$. For $n\geq6$
Theorem \ref{mainstructure} completely determines $M(D^{\prime})$.
Note that this matrix corresponds to the drawings by Bla\v{z}ek and
Koman\ \cite{BC64}.
\end{proof}

In contrast to the even case, for $n$ odd there is an exponential
number of non-equivalent crossing optimal 2-page book drawings of
$K_{n}.$ For any odd integer $n\geq5$, we construct $2^{(n-5)/2}$
non-equivalent crossing optimal drawings of $K_{n}$. In fact,
these $2^{(n-5)/2}$ drawings are pairwise non-homeomorphic. To prove
this, we need the next two results.

\begin{theorem}
For every $n\geq13$ odd, every crossing optimal 2-page book drawing of $K_{n}$
has exactly one Hamiltonian cycle of non-crossed edges, namely the one
obtained from the edges on the spine and the $1n$ edge.
\end{theorem}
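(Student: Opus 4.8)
The plan is to first record why the cycle $C_0:=\{(1,2),(2,3),\ldots,(n-1,n),(1,n)\}$ is always a Hamiltonian cycle of non-crossed edges, and then to show it is the only one by a rigidity argument driven by the structure theorem. In a 2-page book drawing two edges cross only if their endpoints interleave and they lie on the same page. No edge can interleave a spine edge $(m,m+1)$, since no vertex lies strictly between $m$ and $m+1$, and none can interleave $(1,n)$, since no vertex lies outside $[1,n]$; hence all $n$ edges of $C_0$ are non-crossed and $C_0$ is a Hamiltonian cycle of non-crossed edges. Each generating transformation maps the set $C_0$ onto itself (for instance the rotation $f$ sends $(1,2)$ to $(1,n)$ and cyclically permutes the remaining edges of $C_0$, while $g$ and $h$ fix $(1,n)$ and send spine edges to spine edges). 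Since every equivalence is a homeomorphism, it preserves crossings and therefore maps non-crossed edges to non-crossed edges and Hamiltonian cycles to Hamiltonian cycles. Consequently it suffices to prove uniqueness for the normal form $D^{\prime}$ produced by Theorem~\ref{mainstructure}.

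The second step is to determine exactly which chords are non-crossed in $D^{\prime}$. A chord $(i,j)$ of a given colour is non-crossed precisely when no edge of that colour has exactly one endpoint in $\{i+1,\ldots,j-1\}$ and its other endpoint outside $[i,j]$. Because in an optimal drawing the interior vertices carry edges of both colours reaching across most of the spine, this containment condition is extremely restrictive, and I would check it against the explicit colourings \eqref{SquareUpper}, \eqref{SquareLower}, and \eqref{halfbluetriang} region by region. I expect the outcome to be that, apart from the edges of $C_0$, every non-crossed edge lives in a bounded-size window around the vertices $1$, $n$, and the centre $\lceil n/2\rceil$. Equivalently, all but $O(1)$ vertices are \emph{locked}, meaning that their only non-crossed edges are their two incident spine edges.

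The rigidity argument then runs as follows. Let $H$ be a Hamiltonian cycle of non-crossed edges with $H\neq C_0$; then $H$ contains some chord $(i,j)$ with $j\geq i+2$ and $(i,j)\neq(1,n)$. Since $H$ is $2$-regular and all its edges are non-crossed, a locked vertex is forced to use both of its spine edges in $H$. Hence if every vertex of $\{i+1,\ldots,j-1\}$ is locked, then $H$ contains the whole spine path from $i$ to $j$ together with $(i,j)$, which closes into the cycle $i,i{+}1,\ldots,j,i$ of length $j-i+1<n$, contradicting Hamiltonicity. Thus any chord used by $H$ must enclose a non-locked interior vertex, forcing an endpoint into one of the three special windows. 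The hypothesis $n\geq13$ guarantees that the windows around $1$, the centre, and $n$ are pairwise disjoint and individually small, so the finitely many candidate non-crossed chords meeting them can be enumerated and each ruled out directly (each either again encloses a locked interior and yields a short cycle, or its use strands the remaining vertices), forcing $H=C_0$.

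The decisive and most delicate step is the second one: extracting from the matrix description in Theorem~\ref{mainstructure} the \emph{exact} list of non-crossed chords and verifying that every vertex outside the three small windows is locked. The containment criterion must be checked carefully in each of the regions $R(D),T_U(D),T_L(D)$, and the odd case carries the unfixed entries (the crosses in Figure~\ref{evenandodd}) that must be shown not to produce additional non-crossed chords capable of completing an alternative Hamiltonian cycle. Once the locked-vertex picture is established, the short-cycle rigidity argument and the role of the bound $n\geq13$ become routine.
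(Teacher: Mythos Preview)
Your plan is in the right direction but overcomplicates the task, and the step you flag as ``decisive and most delicate'' is precisely the whole content of the paper's proof --- which you have not carried out. The paper does your step~2 directly (case analysis against the colouring in Theorem~\ref{mainstructure}) and obtains a \emph{stronger} conclusion than you anticipate: for $n\geq 13$ there are \emph{no} non-crossed chords at all outside $C_0$. In your language, every vertex is locked. The argument is short: for an entry $(r,c)$ whose colour is fixed by Theorem~\ref{mainstructure}, one of the diagonal neighbours $(r,c)^{+}:=(r+1,c+1)$ or $(r,c)^{-}:=(r-1,c-1)$ (wrapping via the strip diagram when $c=n$ or $r=1$) has the same fixed colour, and such a pair always crosses; for an unfixed entry, both $(r,c)^{+}$ and $(r,c)^{-}$ are fixed with opposite colours, so one of them crosses $(r,c)$ whatever its colour; the short chords $c-r=2$ are handled by any matching-colour edge through the middle vertex $r+1$. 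The bound $n\geq 13$ is what makes each of these neighbour entries fall into a region where Theorem~\ref{mainstructure} actually determines the colour.

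So the discrepancy is this: your proposal expects residual non-crossed chords in ``windows'' near $1$, $\lceil n/2\rceil$, and $n$, and then builds a rigidity argument (locked vertices forcing short cycles) to eliminate alternative Hamiltonian cycles. That machinery is unnecessary --- once you actually execute step~2 you will find the windows are empty for $n\geq 13$ and the theorem follows immediately. As written, the proposal is a plan rather than a proof: the only substantive work (checking the containment criterion region by region) is promised but not done, and the surrounding framework would become redundant once it is.
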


\begin{proof}
Assume $n\geq13$ is odd. To show that $123\ldots n$ is the only
non-crossed Hamiltonian cycle, we show that all other edges are
crossed at least once. Assume that $D$ has the form described in
Theorem \ref{mainstructure}. Let $(r,c)$ be an entry of $M(D)$ that
does not represent an edge on the spine or the $1n$ edge. Let
\[
(r,c)^{+}=\left\{
\begin{array}
[c]{ll}%
(r+1,c+1) & \text{if }c<n\text{, or}\\
(1,r+1), & \text{if }c=n,
\end{array}
\right.  \text{ and }(r,c)^{-}=\left\{
\begin{array}
[c]{ll}%
(r-1,c-1) & \text{if }r>1\text{, or}\\
(c-1,n), & \text{if }r=1\text{.}%
\end{array}
\right.
\]
Note that the edges corresponding to $(r,c)^{+}$ and $(r,c)^{-}$
cross the edge $rc$ if they have the same color as $(r,c)$.

First assume that $3\leq c-r\leq n-3$. Suppose that $(r,c)$ is a
blue entry specified
 by Theorem \ref{mainstructure}. If $5\leq r+c\leq\lfloor
n/2\rfloor-1$ or if $n+3\leq r+c\leq n+\lfloor n/2\rfloor-1$, then
note that the entry $(r,c)^{+}$ is also blue according to Theorem
\ref{mainstructure}, and thus the edges corresponding to $(r,c)$ and
$(r,c)^{+}$ cross each other.

Because $n\geq13$, if $\lfloor n/2\rfloor\leq r+c\leq\lfloor
n/2\rfloor+1 $ or $n+\lfloor n/2\rfloor\leq r+c\leq n+\lfloor
n/2\rfloor+1$, then $5\leq\lfloor n/2\rfloor-2\leq r+c-2\leq\lfloor
n/2\rfloor+1$ or $n+3\leq n+\lfloor n/2\rfloor-2\leq r+c-2\leq
n+\lfloor n/2\rfloor+1$, respectively. Thus the entry $(r,c)^{-}$ is
also blue according to Theorem \ref{mainstructure}, and thus the
edges corresponding to $(r,c)$ and $(r,c)^{-}$ cross each other.

A similar argument shows that for every red entry $(r,c)$ specified
by Theorem \ref{mainstructure}, either $(r,c)^{+}$ or $(r,c)^{-}$ is
also a red edge.

Second, assume that $c-r=n-2$, that is $(r,c)\in\{(1,n-1),(2,n)\}$.
If $(r,c)=(1,n-1)$, then $(r,c)$ is red and because $(2n-4)\geq
n+\lceil n/2\rceil+2$ for $n\geq13$, it follows that $rc$ crosses
the edge corresponding to $(n-3,n)$, which is red. If $(r,c)=(2,n)$,
then $(r,c)$ is blue and because $\lfloor n/2\rfloor\geq4$ for
$n\geq13$, it follows that $rc$ crosses the edge corresponding to
$(1,4)$, which is blue.

Suppose now that the color of $(r,c)$ is not determined by Theorem
\ref{mainstructure}. First assume that $r+c\in\{\lfloor
n/2\rfloor+2,\lceil n/2\rceil+2,n+\lfloor n/2\rfloor+2,n+\lceil
n/2\rceil+2\}$. Again, by Theorem \ref{mainstructure} note that
$(r,c)^{-}$ is blue and $(r,c)^{+}$ is red. Similarly, if
$r+c=n+2$, then $(r,c)^-$ is red and $(r,c)^+$ is blue. Thus
regardless of its color, the edge $rc$ will cross one of the two
edges corresponding to these two entries.

Finally, assume $c-r=2$. From Theorem \ref{mainstructure}, the
number of red entries of the form $(t,r+1)$ or $(r+1,d)$, with
$1\leq t\leq r$ and $r+3\leq d\leq n$ is at least $\lfloor
n/2\rfloor -5\geq1$. A similar statement holds for the number of
blue entries of the same form. Thus there is at least one blue edge
(not on the spine) and at least one red edge incident to $r+1$. One
of these two edges will necessarily cross the edge $rc$ regardless
of its color.
\end{proof}

Note that for $n \leq11$ the above approach cannot guarantee that
there are no additional non-crossed edges. For example for $n=11$ the
element $(1,10)$ cannot be determined. However, these small
cases can be handled by exhaustive enumeration, which shows that for
crossing optimal drawings there are no such edges for $n=11$ and no
alternative Hamiltonian cycles for $n=9$. For $n=5,7$ there exist
alternative Hamiltonian cycles of non-crossed edges, but they do not
lead to additional equivalences between the crossing optimal drawings.

\begin{corollary}
\label{Cor:homeomorphic}If $D$ and $D^{\prime}$ are crossing optimal 2-page
book drawings of $K_{n}$, then either $D$ and $D^{\prime}$ are not
homeomorphic, or else $M(D)$ and $M(D^{\prime})$ are equivalent.
\end{corollary}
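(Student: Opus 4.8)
The plan is to prove the statement in the form ``homeomorphic $\Rightarrow$ equivalent matrices.'' Throughout I regard the drawings as living on the sphere $S^{2}$ (adding the point at infinity), and I take a homeomorphism of $D$ onto $D'$ to consist of a homeomorphism $\phi$ of $S^{2}$ together with a permutation $\pi$ of $\{1,\ldots,n\}$ such that $\phi$ carries the arc representing $ij$ in $D$ onto the arc representing $\pi(i)\pi(j)$ in $D'$. The most basic observation, which I would record first, is that $\phi$ neither creates nor destroys crossings, and hence maps the set of \emph{non-crossed} edges of $D$ bijectively onto the set of non-crossed edges of $D'$.

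The engine of the argument is the theorem immediately preceding this corollary, which guarantees (for $n\geq 13$ odd) that a crossing optimal drawing has a \emph{unique} Hamiltonian cycle of non-crossed edges, namely the cycle $H_{D}$ formed by the spine edges together with the edge $1n$. Since $\phi$ preserves non-crossed edges, it must carry $H_{D}$ onto a non-crossed Hamiltonian cycle of $D'$, which by uniqueness is exactly $H_{D'}$. Because $H_{D}$ and $H_{D'}$ both traverse their vertices in the cyclic order $1,2,\ldots,n$, the homeomorphism $\phi$ restricts to a homeomorphism of one circle onto the other carrying marked points to marked points, so it either preserves or reverses this cyclic order. Consequently the induced permutation $\pi$ is a symmetry of the $n$-cycle, i.e.\ a rotation $i\mapsto i+t \pmod{n}$ or a reflection. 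Rotations are realized in the strip diagram by powers of $f$ and reflections by $g$ composed with a power of $f$, so at the level of vertices $\pi$ lies in $\langle f,g\rangle$.

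It then remains to account for the coloring, i.e.\ the assignment of each edge to a page. The cycle $H_{D}$ is a simple closed curve separating $S^{2}$ into two disks, and since no edge of $D$ crosses $H_{D}$, every remaining edge lies in exactly one of these disks; moreover the blue edges (other than $1n$) fill one disk and the red edges fill the other, so the two disks are precisely the two pages. The homeomorphism $\phi$ either preserves the pair of disks or interchanges them: in the first case the page of each edge is preserved under $\pi$, while in the second case every non-cycle edge switches pages, which is exactly the effect of $h$ on $M(D)$. Combining the two paragraphs, $M(D')$ is obtained from $M(D)$ by a transformation of the form $h^{a}\circ g^{b}\circ f^{i}$, and hence $M(D)$ and $M(D')$ are equivalent.

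Finally I would reconcile this with the full range of $n$. For $n$ even the conclusion is immediate, since Theorem~\ref{OptimalEven} together with Theorem~\ref{mainstructure} shows that all crossing optimal matrices agree up to equivalence. For odd $n\geq 13$ the argument above applies verbatim. The remaining odd cases $n\in\{5,7,9,11\}$ are outside the scope of the uniqueness theorem and must be dispatched by the finite enumeration already invoked in the text. I expect the main obstacle to be exactly this last point: the clean dihedral reduction relies essentially on the \emph{uniqueness} of the canonical non-crossed Hamiltonian cycle, so for $n=5,7$ (where extra non-crossed Hamiltonian cycles exist) one has to verify directly that these do not produce any homeomorphism beyond those already captured by $\langle f,g,h\rangle$, and for $n=9,11$ one must confirm by enumeration that no such exceptional cycles arise at all.
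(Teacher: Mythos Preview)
Your proposal is correct and follows essentially the same route as the paper: use uniqueness of the non-crossed Hamiltonian cycle (for odd $n\ge 13$) to force the vertex permutation into the dihedral group $\langle f,g\rangle$, then observe that the homeomorphism either preserves or swaps the two complementary disks (pages), which accounts for $h$; handle even $n$ via Theorem~\ref{OptimalEven} and small odd $n$ by enumeration. The paper's proof is slightly terser but identical in substance, including the explicit case split and the appeal to exhaustive checking for $n\in\{5,7,9,11\}$.
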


\begin{proof}
If $n$ is even the result is trivial by Theorem \ref{OptimalEven}.
If $n$ is odd and $n\leq11$, then using Theorem \ref{mainstructure}
we exhaustively found all equivalence classes of crossing optimal
drawings. There are $1$, $4$, $9$, and $25$ equivalence classes for
$n=5$, 7, 9, and 11, respectively. We verified that all of these
equivalence classes were topologically distinct. If $n\geq13$ and
$D$ and $D^{\prime}$ are crossing optimal 2-page book drawings, then
by the previous theorem both $D$ and $D^{\prime}$ have only one
non-crossed Hamiltonian cycle. Thus if $H:D\rightarrow D^{\prime}$
is a homeomorphism, then $H$ must send the Hamiltonian cycle
$123\ldots n$ to itself. It follows that $H$ restricted to this
cycle is the composition of a rotation of the cycle with either the
identity, or the function that reverses the order of the cycle.
Moreover, once the edges on the spine are fixed, the drawing is
determined by the colors of the remaining edges. Thus either $H$ is
determined by its action on the cycle, or else $H$ switches the blue
edges not on the spine with the red edges. In other words,
$M(D^{\prime})=M(H(D))=M((h^{a}\circ g^{b}\circ f^{i})(D))$ for some
$i\in\{0,1,2,\ldots,n-1\}$ and $a,b\in\{0,1\}$. Thus $M(D)$ and
$M(D^{\prime})$ are equivalent.
\end{proof}

\begin{theorem}
\label{OptimalOdd}For $n$ odd, there are at least $2^{(n-5)/2}$
 pairwise non-homeomorphic crossing optimal 2-page book
drawings of $K_{n}$.
\end{theorem}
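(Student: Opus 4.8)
The goal is to produce $2^{(n-5)/2}$ crossing optimal $2$-page drawings of $K_n$ and to show they are pairwise non-homeomorphic. The bridge to non-homeomorphism is Corollary~\ref{Cor:homeomorphic}: among crossing optimal drawings, being non-homeomorphic is the same as having non-equivalent $2$-page matrices. So the plan splits into three tasks: (i) build an explicit family indexed by $\{0,1\}^{(n-5)/2}$; (ii) verify every member is crossing optimal; and (iii) verify the members are pairwise non-equivalent. Throughout write $n=2m+1$.

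For (i) and (ii) I would start from the normalized matrix of Theorem~\ref{mainstructure}, fix the drawing $D_0$ in which all the undetermined entries (the crosses in Figure~\ref{evenandodd}) are colored one chosen way, and single out a set $F$ of exactly $(n-5)/2$ of these undetermined entries whose corresponding edges are pairwise non-interleaving on the spine. For a fixed spine order, two edges cross on the page they share precisely when their endpoints interleave; since the edges indexed by $F$ pairwise fail to interleave, none of them ever crosses another, no matter how $F$ is colored. Consequently the recolorings act independently: flipping a single edge $e\in F$ to the other page changes $\crg$ by the number of edges $e$ would now cross minus the number it previously crossed, and this difference does not depend on the colors of the other edges of $F$. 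I would then check that in $D_0$ each $e\in F$ interleaves with equally many edges drawn above the spine as below it, so that each individual flip leaves $\crg$ unchanged. Because the flips do not interact, every one of the $2^{(n-5)/2}$ colorings $D_\sigma$ satisfies $\crg(D_\sigma)=\crg(D_0)=Z(n)$ and is therefore crossing optimal (note this uses only a subset of all undetermined entries, consistent with the fact that the true number of classes exceeds $2^{(n-5)/2}$).

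For (iii), every $D_\sigma$ is in the normalized form of Theorem~\ref{mainstructure}, so if $D_\sigma$ and $D_{\sigma'}$ were equivalent they would be related by a transformation $h^a\circ g^b\circ f^i$. The crux, and what I expect to be the main obstacle, is a rigidity statement: the only such transformation carrying one normalized matrix to another normalized matrix is the identity, whence it fixes each entry of $F$ and forces $\sigma=\sigma'$. To prove rigidity I would track the images of the features that Lemma~\ref{start}, Lemma~\ref{halving}, and Theorem~\ref{mainstructure} pin down rigidly, namely the all-blue first column, the red initial segment of the first row, and the distinguished halving entries, and show that matching them across two normalized matrices forces $i=0$ and $a=b=0$. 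For the small cases $5\le n\le 11$ the exhaustive enumeration recorded in the proof of Corollary~\ref{Cor:homeomorphic} already furnishes more than $2^{(n-5)/2}$ equivalence classes, so it suffices to treat $n\ge 13$, where uniqueness of the non-crossed Hamiltonian cycle makes every homeomorphism act as a cyclic symmetry and places us exactly in the setting of Corollary~\ref{Cor:homeomorphic}.

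Combining the three tasks, the drawings $D_\sigma$ for $\sigma\in\{0,1\}^{(n-5)/2}$ are crossing optimal and pairwise non-equivalent, hence pairwise non-homeomorphic by Corollary~\ref{Cor:homeomorphic}, which gives the claimed lower bound. The delicate point is entirely in (iii): one must rule out that a nontrivial rotation $f^i$ or a reflection accidentally sends a normalized drawing to an equally normalized one carrying a permuted coloring of $F$, which is genuinely at risk because the symmetry group has $4n$ elements and $F$ sits near the symmetric equator of the matrix, precisely where these maps are most likely to interact.
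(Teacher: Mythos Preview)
Your overall strategy---construct a family of free-edge recolorings, show each is crossing optimal, then show pairwise non-equivalence via Corollary~\ref{Cor:homeomorphic}---is the right shape, and your reduction in (ii) to checking that each free edge interleaves with equally many blue as red edges is sound in principle. However, there are two genuine gaps.

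First, and most seriously, the rigidity claim in (iii) is false. There \emph{is} a nontrivial element of the symmetry group, namely $h\circ g\circ f^{(n+1)/2}$, that sends normalized crossing optimal matrices to normalized crossing optimal matrices; you can see this already by noting that the two Bla\v{z}ek--Koman-type drawings in which the free diagonal is monochromatic (all blue versus all red) are both in normalized form and are interchanged by this map. So the argument you sketch for (iii) cannot succeed as stated---the features you propose to track (the all-blue last column, the red initial row segment, the halving entries) are themselves preserved by $h\circ g\circ f^{(n+1)/2}$, which is precisely the ``accidental'' symmetry you flag as a risk in your last paragraph. The paper does not prove rigidity; instead it constructs $2^{(n-3)/2}$ drawings (one more free edge than you propose), shows that the only elements of the $4n$-element group sending this family to itself are the identity and $h\circ g\circ f^{(n+1)/2}$, and concludes that each drawing is equivalent to exactly one other, yielding $2^{(n-5)/2}$ classes. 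If you want to keep your smaller family of size $2^{(n-5)/2}$ and argue non-equivalence directly, you would need to choose $F$ so that $h\circ g\circ f^{(n+1)/2}$ does not permute your $D_\sigma$ nontrivially, and prove that no other group element does either; you have not done this.

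Second, step (ii) is left as ``I would then check'', but the balance condition---that each free edge interleaves with equally many blue edges as red edges, independently of the colors assigned to the other free edges---is the substance of the optimality claim and does not follow from anything you have set up. In the paper the free edges are chosen as the non-crossing matching $I_{(n+3)/2}=\{rc:r+c\equiv(n+3)/2\pmod n\}$, and for a fixed $rc\in I_{(n+3)/2}$ one exhibits an explicit fixed-point-free color-swapping involution $T(td)=(r+c-t,\,r+c+n-d)$ on the set of edges interleaving $rc$; this is what forces the blue/red counts to match regardless of how the other free edges are colored. Without an analogous mechanism, your step (ii) is an assertion rather than a proof.
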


\begin{proof}
As usual let $1,2,\ldots,n$ be the vertices of $K_{n}$. Let $rc$ be
an edge of $K_{n}$ that is not on the Hamiltonian cycle $H=$
$12\ldots n$, we color $rc$ red or blue according to the following
rule: if $r+c\equiv s\pmod n$ for some integer $2\leq s\leq(n+1)/2$,
then we color $rc$ blue, if $r+c\equiv s\pmod n$ for some integer
$(n+5)/2\leq s\leq n+1$, then we color $rc$ red. Finally, if
$r+c\equiv(n+3)/2\pmod n$, then we color $rc$ either red or blue.
See (Figure \ref{construction}.)

We first argue that all of these colorings yield crossing optimal
drawings of $K_{n}$ regardless of the color of the $(n-3)/2$ edges
$rc$ for which $r+c\equiv(n+3)/2\pmod n$.

\begin{figure}
[ptb]
\begin{center}
\includegraphics[
scale=1]
{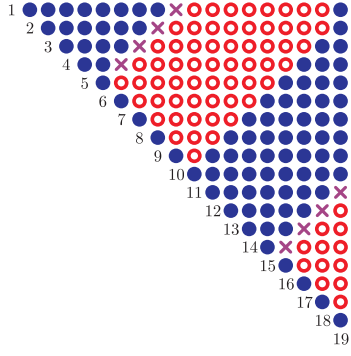}
\caption{The $2^8$ crossing optimal drawings (only $2^7$ non-equivalent) for $n=19$ in Theorem~\ref{OptimalOdd}. They are obtained by assigning arbitrary colors to the crosses in this matrix.}
\label{construction}
\end{center}
\end{figure}

For every $1\leq s\leq n$, let $I_{s}=\{rc$ edge$:rc\notin H$ and
$r+c\equiv
s\pmod n\}$. Note that $|I_{s}|=(n-3)/2$ for all $s$ and $\bigcup_{s=1}%
^{n}I_{s}$ is the complete set of edges not in $H$. Moreover note
that each $I_{s}$ is a matching of pairwise non-crossing edges.

Let $rc$ be an edge such that $r+c\equiv(n+3)/2\pmod n$. Assume
without loss of generality that $r<c$. \ If \ $td$ is an edge that
crosses $rc$, then $t$ and $d$ are cyclically separated from $r$ and
$c$; that is, we may assume that $r<t<c$ and $d<r$ or $d>c$. To
facilitate the case analysis we may assume that the edges that could
cross $rc$ are the edges $td$ such that $r<t<c<d<n+r$, with the
understanding that $d$ represents the point $d-n$ when $d>n$. Let
$C=\{td$ edge$:r<t<c<d<n+r\}$ and
consider the function $T:C\rightarrow C$ defined by $T(td)=t^{\prime}%
d^{\prime}$ where $t^{\prime}=r+c-t$ and $d^{\prime}=r+c+n-d$. Note
that $T$ is well defined because $r<t^{\prime}<c<d^{\prime}<n+r$ and
$T$ is one-to-one
on $C$. %Also, if $T(td)=td$, then $r+c-t=t$ and $n+r+c-d=d$, it follows that
%$r+c$ is even and $n$ is even, which is a contradiction. [We can remove this
%last sentence because this is implied by the next argument because $td$ and
%$T(td)$ have different colors]
Moreover, note that
\begin{align*}
t^{\prime}+d^{\prime}  & \equiv r+c+n+r+c-t-d\pmod n\\
& \equiv2(r+c)-(t+d)\pmod n\\
& \equiv(n+3)-(t+d)\equiv 3-(t+d)\pmod n,
\end{align*}
so $t+d\equiv s\pmod n$ with $2\leq s\leq(n+1)/2$ if and only if
$t^{\prime }+d^{\prime}\equiv 3-(t+d)\equiv n+ 3 -s\pmod n$ and
$(n+5)/2\leq n+3-s\leq n+1$. Thus $td$ and $T(td)$ have different
colors, which means that $C$ contains as many red edges as blue
edges. Hence $rc$ crosses the same number of edges independently of
its color. This shows that all the drawings we have described have
the same number of crossings. Finally, we note that the drawing for
which all the arbitrary edges have the same color corresponds to the
construction originally found by Bla\v{z}ek and Koman\ \cite{BC64}
having exactly $Z(n)=\frac{1}{64}(n-1)^{2}(n-3)^{2}$ crossings.
Hence all the other drawings described are crossing optimal as well.

We now argue that every drawing constructed here is equivalent to
exactly one other drawing, and thus we have constructed exactly
$2^{(n-5)/2}$ distinct topological drawings. Let $D$ and
$D^{\prime}$ be two of the crossing optimal drawings we just
constructed and suppose that $D$ and $D^{\prime}$ are homeomorphic.
By Corollary \ref{Cor:homeomorphic}, $M(D)$ and $M(D^{\prime})$ are
equivalent, thus there exists a transformation $F:D\rightarrow
D^{\prime}$ such that $F=h^{a}\circ g^{b}\circ f^{i}$ with
$i\in\{0,1,2,\ldots,n-1\}$ and $b,a\in\{0,1\}$. First observe that
under $f$, $g$, or $h$, the absolute value difference of the number
of red minus blue edges remains invariant. Thus the drawing $D$ in
which all of the edges in $I_{(n+3)/2}$ are red can only be
homeomorphic to the drawing $D^{\prime}$ in which all of those edges
are blue. These two are indeed homeomorphic under the function
$F=h\circ g\circ f^{(n+1)/2}$. Now suppose that the edges
$I_{(n+3)/2}$ in $D$ and in $D^{\prime}$ are not all of the same
color. Note that $f,g,$ and $h$ send $I_m$ into another $I_{m'}$,
and if $I_m$ is monochromatic (all edges of $I_m$ have the same
color) in $D$, then $I_{m'}$ is monochromatic in $f(D),g(D),$ and
$h(D)$. Since $I_m$ is monochromatic in $D$ if and only if $m\neq
(n+3)/2$, then $F$ must send $I_{(n+3)/2}$ to itself. If $b=0$,
$rc\in I_{(n+3)/2}$, and $r^{\prime}c^{\prime}$ is the image of $rc$
under $F$, then $r^{\prime}+c^{\prime}\equiv r-i+c-i\pmod n$. Thus
$r^{\prime }+c^{\prime}\equiv r+c\pmod n$ if and only if $i=0$.
Because the edges $I_{1}$ in $D$ are blue and the edges $I_{1}$ in
$h(D)$ are red, it follows that $a=0$ and thus $F$ is the identity.
Last, if $b=1$, $rc\in I_{(n+3)/2}$, and $r^{\prime}c^{\prime}$ is
the image of $rc$ under $F$, then
$r^{\prime}+c^{\prime}\equiv(n+1-(c-i))+(n+1-(r-i))\equiv
2+2i-(r+c)\pmod n$. Thus $r^{\prime}+c^{\prime}\equiv r+c\pmod n$ if
and only if $i=(n+1)/2$. Because the edges $I_{1}$ in both $D$ and
$h(f^{(n+1)/2}(D))$ are blue, it follows that $a=1$ and thus
$F=h\circ g\circ f^{(n+1)/2}$. It can be verified that indeed $F(D)$
is one of the drawings we constructed here, and thus exactly half of
the drawings we described are pairwise non-isomorphic.
\end{proof}

\begin{table}[ht]
\centering
\begin{tabular}{||r|r||r|r||r|r||}
 $n$ & \multicolumn{1}{|c||}{drawings} & $n$ & \multicolumn{1}{|c||}{drawings} & $n$ & \multicolumn{1}{|c||}{drawings}\\
\hline
 5 &       1 &     17 &     324 &     29 &   38944 \\
 7 &       4 &     19 &     748 &     31 &   84064 \\
 9 &       9 &     21 &    1672 &     33 &  180288 \\
11 &      25 &     23 &    3736 &     35 &  385216 \\
13 &      58 &     25 &    8208 &     37 &  819328 \\
15 &     142 &     27 &   17968 & & \\
\end{tabular}

\vspace{3ex}

\caption{The number of non-homeomorphic crossing optimal $2$-page book drawings of~$K_{n}$ for odd $n$, $5 \leq n \leq 37$.}
\label{table_odd}
\end{table}

The above theorem gives a lower bound of $2^{(n-5)/2}$ for the number
of non-equivalent crossing optimal drawings.  As in the crossing
optimal drawings of Theorem~\ref{mainstructure} there are
$\frac{5}{2}(n-5)$ entries with non-fixed colors, we get an upper
bound of $2^{5({n-5})/{2}}$ non-equivalent crossing optimal
drawings. With exhaustive enumeration we have been able to determine
the exact numbers of non-equivalent crossing optimal drawings for
$n\leq 37$, cf. Table~\ref{table_odd}. The obtained results suggest an
asymptotic growth of roughly $2^{0.54 n}$, rather close to our lower
bound.

\section{Concluding remarks}

\label{sec:concludingremarks}

It was proved by \'Abrego and Fern\'andez-Merchant \cite{AF05}
and by Lov\'asz et al.\ \cite{LVWW04} that the inequality $E_{\leq
k}\left( P\right) \geq 3\tbinom{k+2}{2}$ holds (in the geometric
setting) for every set $P$ of $n$ points in general position in the
plane and for every $k$ such that $0\leq k\leq \left\lfloor
n/2\right\rfloor -2$. This inequality used with the rectilinear
version of Theorem~\ref{CrvsEdges} gives $Z\left( n\right) $ as a
lower bound for the rectilinear crossing number of $K_{n}$
\cite{AF05}. In contrast to the rectilinear case, the inequality
$E_{\leq k}\left( D\right) \geq 3 \binom{k+2}{2}$ does not hold in
general for topological drawings $D$ of $K_{n}$, not even for
general 2-page drawings (see Figure~\ref{counterex}). This shows the
relevance of introducing the parameter $E_{\leq \leq k}\left(
D\right)$ (for which Theorem~\ref{ThreeThrees} can be established,
leading to the $2$-page crossing number of $K_n$). However, the
inequality $E_{\leq k}\left( D\right) \geq 3\binom{k+2}{2}$ does
hold for crossing optimal $2$-page drawings of~$K_{n}$. For a proof
of this, and other interesting observations on crossing optimal
drawings of~$K_n$, we refer the reader to Section 4 in the
proceedings version of this paper~\cite{AAFRS}.

Our approach to determine $k$-edges in the topological setting is to define
the orientation of three vertices by the orientation of the corresponding
triangle in a good drawing of the complete graph. It is natural to ask whether
this defines an abstract order type. To this end, the setting would have to
satisfy the axiomatic system described by Knuth~\cite{knuth}. But it is easy
to construct an example which does not fulfill these axioms, that is, our
setting does not constitute an abstract order type as described by
Knuth~\cite{knuth}. It is an interesting question for further research how
this new concept compares to the classic order type, both in terms of theory
(realizability, etc.) and applications.

\begin{figure}%[hp]
\begin{center}
\includegraphics[width=6in, trim=0cm 0.2in 0cm .2in, clip=true ] {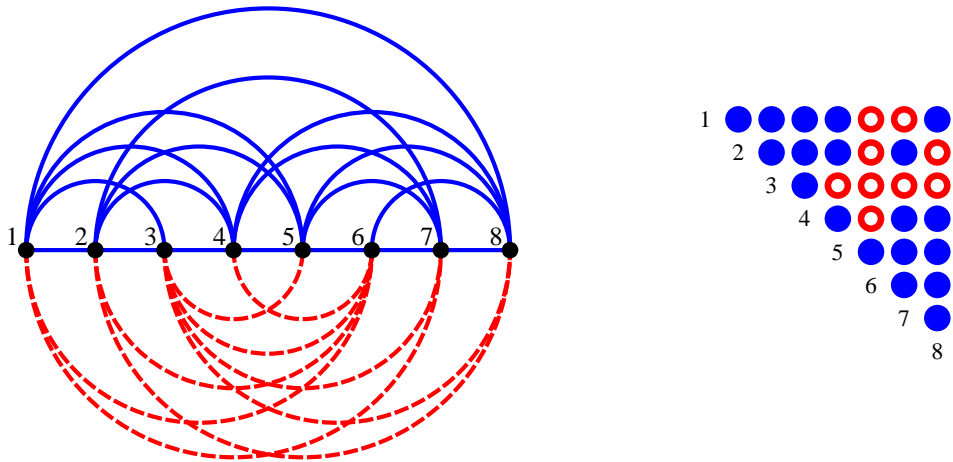}
\caption{A 2-page book drawing of $K_{8}$ with four 0-edges
(namely $(1,7)$, $(1,8)$, $(2,7)$, and $(2,8)$) and  four 1-edges
  (namely $(1,5)$
$(1,6)$, $(3,8)$, and $(4,8)$). This shows that the inequality
$E_{\leq k}( D) \geq 3 \binom{k+2}{2}$, which holds for every
geometric drawing $D$ of $K_n$, does not necessarily hold if $D$ is
a topological drawing. } \label{counterex}
\end{center}
\end{figure}

We believe that the developed techniques of generalized orientation,
$k$-edge for topological drawings, and ${\leq}{\leq}k$-edges are of
interest in their own. We will investigate their usefulness for
related problems in future work. For example, they might also play a
central role to approach the crossing number problem for general
drawings of complete and complete bipartite graphs.

\section{Acknowledgments}

O. Aichholzer is partially supported by the ESF EUROCORES programme
EuroGIGA, CRP ComPoSe, under grant FWF [Austrian Fonds zur
  F\"{o}rderung der Wissenschaftlichen Forschung] I648-N18. P. Ramos
is partially supported by MEC grant MTM2011-22792 and by the ESF
EUROCORES programme EuroGIGA, CRP ComPoSe, under grant
EUI-EURC-2011-4306. G.~Salazar is supported by CONACYT grant
106432. This work was initiated during the workshop Crossing Numbers
Turn Useful, held at the Banff International Research Station
(BIRS). The authors thank the BIRS authorities and staff for their
support.

\end{document}